\newcommand\numberthis{\addtocounter{equation}{1}\tag{\theequation}}
\theoremstyle{definition}
\newtheorem{defn}{Definition}[section]
\newtheorem{theorem}[defn]{Theorem}
\newtheorem{lemma}[defn]{Lemma}
\newtheorem{prop}[defn]{Proposition}
\newtheorem{remark}[defn]{Remark}
\numberwithin{equation}{section}
\title[Explicit Reciprocity Laws for Formal Drinfeld Modules]{Explicit Reciprocity Laws for Formal Drinfeld Modules}
\author{Marwa Ala Eddine \\\\
\textit{U\MakeLowercase{niversit\'e de }B\MakeLowercase{ourgogne} F\MakeLowercase{ranche-}C\MakeLowercase{omt\'e,} L\MakeLowercase{aboratoire de} M\MakeLowercase{ath\'ematique de} B\MakeLowercase{esan\c con}, 25030 B\MakeLowercase{esan\c con,} F\MakeLowercase{rance.}} \\ \textit{E}\MakeLowercase{\textit{-mail: marwa.ala\_eddine@univ-fcomte.fr}},  \textit{ORCID: 0000-0001-6911-0580}
}
\begin{document}
\maketitle

\begin{abstract}
In this paper, we prove explicit reciprocity laws for a class of formal Drinfeld modules having stable reduction of height one, in the spirit of those existing in characteristic zero (cf. the work of Wiles \cite{Wiles}). We begin by defining  the Kummer pairing in the language of formal Drinfeld modules defined over local fields of positive characteristic. We then prove explicit formulas for this pairing in terms of the logarithm of the formal Drinfeld module, a certain Coleman power series, torsion points and the trace. Our results extend the explicit formulas already proved by Angl\`es \cite{angles} for Carlitz modules, and by Bars and Longhi \cite{Ignazio} for sign-normalized rank one Drinfeld modules. The approach followed is similar to the ones followed in the previously mentioned papers \cite{Wiles,angles,Ignazio}, taking into account the subtleties derived from the fact that the formal Drinfeld modules considered are formal power series, and are no longer polynomials.\\\\\footnotesize{ \text{Keywords:} formal Drinfeld modules, explicit reciprocity laws, local fields, class field theory} \\\footnotesize{\textit{2020 Mathematics Subject Classification: 11F85, 11S31}}
\end{abstract}

\footnote[0]{\textit{Statements and Declarations:} There is no conflict of interest.}


\section{Introduction} 

Explicit reciprocity laws have a long history. In 1928, Artin and Hasse \cite{Artin&Hasse} proved explicit formulas in characteristic zero for the multiplicative group. These formulas were completed by Iwasawa \cite{Iwasawa2} in 1968. In 1978, Wiles \cite{Wiles} proved an important generalization to the case of general Lubin-Tate formal groups. Soon after, Kolyvagin \cite{kolyvagin} extended all these results to formal groups of finite height. In the present paper, we place ourselves in positive characteristic, and we consider formal Drinfeld modules as defined by Rosen in \cite{Rosen}.  Let $K$ be a local field of positive characteristic. We know that formal Drinfeld modules can be seen as homomorphisms from the valuation ring of $K$ to the endomorphism ring of the formal additive group. Moreover, torsion points of such modules generate abelian towers of $K$. The maximal abelian extension of $K$ is equal to the compositum of the maximal unramified extension of $K$ and the union of these abelian towers. Therefore, there should be an analogue of the reciprocity laws in our settings. In \cite{angles}, Bruno Angl\`es considered a special class of formal Drinfeld modules, which he called Carlitz polynomials, and for which he proved explicit reciprocity laws in the spirit of those proved in characteristic zero. Later in 2009, Francesc Bars and Ignacio Longhi \cite{Ignazio} proved similar formulas for formal Drinfeld modules obtained from sign-normalized rank 1 Drinfeld modules.

Let $p$ be the characteristic of the local field $K$, and let $\mu_K$ be its normalized discrete valuation. We denote $\mathcal{O}$ the valuation ring of $K$ and $\mathfrak{p}$ its maximal ideal. Let $q$ be the order of the residue field $\mathcal{O}/\mathfrak{p}$. Then $q$ is a power of $p$. Fix an algebraic closure $\Omega$ of $K$, and let $\mu$ be the unique extension of $\mu_K$ to $\Omega$. Let $(\Bar{\Omega},\Bar{\mu})$ be the completion of $(\Omega,\mu)$. All the extensions $F$ of $K$ considered in this paper are supposed to be such that $F \subset \Omega$. 
We also denote $\mathcal{O}_F$ the valuation ring of $F$ and $\mathfrak{p}_F$ its maximal ideal.  Let $\pi$ be a fixed prime of $K$.

Let $B$ be an $\mathcal{O}$- algebra and let $\gamma: \mathcal{O} \longrightarrow B$ be the structure map. Let $B\{\{\tau \}\}$ be the twisted power series ring where $\tau$ is the $q$-Frobenius element satisfying 
\begin{equation}
    \tau x= x^q \tau, ~~ \forall x \in \Omega.
\end{equation}

Let $D: B\{\{\tau \}\} \longrightarrow B $ be the ring homomorphism that assigns to a power series $\sum_{n \ge 0}b_n \tau^n$ its constant term $b_0$. In \cite{Rosen}, Rosen defined a formal Drinfeld $\mathcal{O}$-module over $B $ to be a ring homomorphism 
\begin{align*} 
    \rho: \mathcal{O} & \longrightarrow B\{\{\tau \}\} \\
    a &  \mapsto \rho_a
\end{align*}
satisfying 
\begin{enumerate}[label=(\roman*)]
    \item $\forall a \in \mathcal{O}$, $D(\rho_a)=\gamma(a)$.
    \item $\rho(\mathcal{O}) \not\subset B $.
    \item $\rho_{\pi} \ne 0$.
\end{enumerate}

This definition is a special case of formal $\mathcal{O}$-modules over $B$ defined by Drinfeld in \S 1 of \cite{Drinfeld}. 
Let $f=\sum_{i \ge 0} b_n \tau^n \in B\{\{\tau \}\}$. We set $\operatorname{ord}_{\tau} (f)$ to be the least integer $n$ such that $b_n \ne 0$. The height of $\rho$ is defined by $\operatorname{ht}(\rho)= \operatorname{ord}_{\tau} (\rho_{\pi}) $. Clearly, the height is independent of the choice of the prime $\pi$. 

For any extension $K \subset L \subset \Bar{\Omega}$, the rings $\mathcal{O}_L$ and $\mathcal{O}_L / \mathfrak{p}_L$ are naturally $\mathcal{O}$-algebras. The structure map $\gamma : \mathcal{O} \longrightarrow \mathcal{O}_L$ is the inclusion map. Let $\rho$ be a formal Drinfeld $\mathcal{O}$- module over $\mathcal{O}_L$ as defined above. We say that $\rho$ has stable reduction if the ring homomorphism $\Bar{{\rho}}: \mathcal{O} \longrightarrow \mathcal{O}_L / \mathfrak{p}_L\{\{\tau \}\} $, obtained by reducing modulo $ \mathfrak{p}_L$ the coefficients of $\rho_a$, for $a \in \mathcal{O}$, is also a formal Drinfeld module. 

Let $K_{ur} \subset \Omega$ be the maximal unramified extension of $K$ in $\Omega$, and $H \subset K_{ur}$ be a finite unramified extension of $K$. Let $\rho$ be a formal Drinfeld $\mathcal{O}$-module over $\mathcal{O}_H$, having stable reduction, and such that $\operatorname{ht}(\Bar{\rho})=1$, then $\Bar{\Omega}$ is an $\mathcal{O}$-module for the  following action of $\rho$

\begin{equation} \label{defactionrho}
    a \cdot_{\rho} x =\rho_a(x) ~~ \forall x \in \Bar{\Omega}.
\end{equation} 

For an integer $n \ge 0$, let 

$$    V_{\rho}^n= \{ \alpha \in \Bar{\Omega} ;~ \rho_{a}(\alpha)=0 ~~\forall a \in \mathfrak{p}^n \} $$

\noindent be the $\mathfrak{p}^n$ torsion submodule of $\Bar{\Omega}$ for the action (\ref{defactionrho}).
Using the Weierstrass preparation theorem, we can see that $V_{\rho}^n \setminus V_{\rho}^{n-1}$ is the set of roots of a separable Eisenstein polynomial in $\mathcal{O}_H[X]$ of degree $q^{n-1}(q-1)$, whose constant term is a prime of $H$. Therefore, for an element $v_0 \in V_{\rho}^n \setminus V_{\rho}^{n-1}$, the extension $H(v_0) | H$ is totally ramified of degree $q^{n-1}(q-1)$. Furthermore, the kernel of $a \mapsto \rho_a(v_0)$ is $\mathfrak{p}^n$. Thus it induces an isomorphism of $\mathcal{O}$- modules 

\begin{equation} \label{isomWrho_n}
    \mathcal{O} / \mathfrak{p}^n \cong V_{\rho}^n.
\end{equation}

This implies that any element $v_0 \in V_{\rho}^n \setminus V_{\rho}^{n-1}$ is a generator of $V_{\rho}^n$ as $\mathcal{O}$-module. This also implies that the extension $H_{\rho}^n=H(V_{\rho}^n)$ is equal to $H(v_0)$. For more details see \cite{Iwasawa,Oukhaba}.
Now let $m_0 $ be an integer dividing $[H:K]$, and $\eta \in K$ of valuation $\mu(\eta)=m_0$. 
Let 

$$
     W_{\rho}^n=V_{\rho}^{nm_0} =\{ \alpha \in \mathfrak{p}_{\Bar{\Omega}} ; ~\rho_{\eta^n}(\alpha)=0\},~~ \text{and} ~~ W_{\rho}= \bigcup_n V_{\rho}^n =\bigcup_n W_{\rho}^n.
$$

 Let $$E_{\rho}^n= H(W_{\rho}^n)=H_{\rho}^{nm_0}.$$ 
 
\noindent Let $\mathcal{O}_n$ be the valuation ring of $E_{\rho}^n$ and $\mathfrak{p}_n$ be its maximal ideal. If $L$ is a finite extension of $E_{\rho}^n$, then we denote by
$$\Phi_L: L^{\times} \rightarrow Gal(L^{ab}|L) $$
the norm residue map. For an $\alpha \in \mathfrak{p}_{L}$ we will show in Section \ref{pairing1} that there exists $\xi \in L^{ab}$ such that $\rho_{\eta^n}(\xi)=\alpha$. Therefore we can define the map $(~,~)_{\rho,L,n}:\mathfrak{p}_L \times L^{\times} \longrightarrow W_{\rho}^n$ such that

\begin{equation} \label{eqdefpairing}
    (\alpha,\beta)_{\rho,L,n}=\Phi_L(\beta)(\xi)-\xi; ~~\rho_{\eta^n}(\xi)=\alpha.
\end{equation}

\noindent for $\alpha \in \mathfrak{p}_{L}$ and $\beta \in L^{\times}$. It is clear that $(~,~)_{\rho,L,n}$ is a bilinear from. \newline


The main objective of this paper is to prove explicit reciprocity laws for formal Drinfeld modules having stable reduction of height 1. In other words, we prove explicit formulas for the map $(~,~)_{\rho,L,n}$. 
Now we can state our main results.


\begin{prop} (Proposition \ref{WilesProp}) \label{iwasawamap}
Fix a generator $v_n$ of $ W_{\rho}^n$ as an $\mathcal{O}$-module and suppose $L | K$ is separable. 
There exists a unique map $\psi_{L,n}: L^n \longrightarrow \mathfrak{X}_L/\eta^{n+1}\mathfrak{X}_L$ 
such that 
\begin{equation} 
    (\alpha,\beta)_{\rho,L,n}=  T_{L|K}(\lambda_{\rho}(\alpha)\psi_{L,n}(\beta)) \cdot_{\rho}v_n
\end{equation}

\noindent for all $\alpha \in \mathfrak{p}_{L}$ and $\beta \in L^n$, where $\lambda_{\rho} $ is the logarithm of $\rho$, $ L^n=\{ \beta \in L^{\times};~ (\alpha,\beta)_{\rho,L,n}=0 ~ \forall \alpha \in  L \cap W_{\rho}\} $ and $\mathfrak{X}_L =\{ y \in L; ~\operatorname{T}_{L|K}(xy) \in \mathcal{O}~ \forall x \in \lambda_{\rho}(\mathfrak{p}_L)\}$.
\end{prop}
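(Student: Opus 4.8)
The plan is to transport the Kummer pairing through the logarithm $\lambda_\rho$, which linearizes the $\rho$‑action, and then to recognize the resulting object as an element of the trace‑dual lattice $\mathfrak X_L$; the separability of $L|K$ enters exactly at the point where one needs the trace form to be nondegenerate.

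\textbf{Step 1: $\mathcal O$‑linearity and reduction.} First I would record the behaviour of $(~,~)_{\rho,L,n}$ in its first argument. Since each $\rho_a$ is additive (it is an $\mathbb F_q$‑linear power series in $\tau$) and commutes with $\operatorname{Gal}(L^{ab}|L)$, formula (\ref{eqdefpairing}) gives $(\alpha+\alpha',\beta)_{\rho,L,n}=(\alpha,\beta)_{\rho,L,n}+(\alpha',\beta)_{\rho,L,n}$ and $(\rho_a(\alpha),\beta)_{\rho,L,n}=a\cdot_\rho(\alpha,\beta)_{\rho,L,n}$; moreover if $\alpha=\rho_{\eta^n}(\gamma)$ with $\gamma\in L$ then $\gamma$ itself is a valid choice of $\xi$, so $(\alpha,\beta)_{\rho,L,n}=0$, and for $\beta\in L^n$ one also has $(\alpha,\beta)_{\rho,L,n}=0$ whenever $\alpha\in L\cap W_\rho$. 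Hence, for fixed $\beta\in L^n$, the map $\alpha\mapsto(\alpha,\beta)_{\rho,L,n}$ is an $\mathcal O$‑linear map $\mathfrak p_L\to W_\rho^n$ annihilating $\rho_{\eta^n}(\mathfrak p_L)+(L\cap W_\rho)$. Using the fixed generator $v_n$ and the isomorphism $\mathcal O/\eta^n\mathcal O\xrightarrow{\ \sim\ }W_\rho^n$, $a\mapsto a\cdot_\rho v_n$ (recall $\mu(\eta)=m_0$, so $\eta^n\mathcal O=\mathfrak p^{nm_0}$), this becomes an $\mathcal O$‑linear functional $c_\beta$ on $\mathfrak p_L$ valued in $\mathcal O/\eta^n\mathcal O$, with $\beta\mapsto c_\beta$ additive by bilinearity of the pairing.

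\textbf{Step 2: linearization by the logarithm.} Because $\bar\rho$ has height one and stable reduction and $H|K$ is unramified, $\rho$ is a formal $\mathcal O$‑module of height one, so its logarithm $\lambda_\rho\in L[[X]]$ (with $\lambda_\rho(X)=X+\cdots$) converges on $\mathfrak p_L$ — the valuations of its coefficients tend to infinity since $q^i$ outgrows $i\,\mu(\pi)$ — and satisfies $\lambda_\rho(\rho_a(x))=a\,\lambda_\rho(x)$. Thus $\lambda_\rho$ intertwines the $\rho$‑action on $\mathfrak p_L$ with ordinary multiplication on $L$; its kernel is the torsion submodule $L\cap W_\rho$, so it induces an embedding with image a full $\mathcal O$‑lattice $\Lambda:=\lambda_\rho(\mathfrak p_L)\subset L$, and $\lambda_\rho\bigl(\rho_{\eta^n}(\mathfrak p_L)+(L\cap W_\rho)\bigr)=\eta^n\Lambda$. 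Consequently $c_\beta$ descends to an $\mathcal O$‑linear map $\Lambda/\eta^n\Lambda\to\mathcal O/\eta^n\mathcal O$, namely $x\mapsto c_\beta(\lambda_\rho^{-1}(x))$.

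\textbf{Step 3: trace duality, existence and uniqueness.} Since $L|K$ is separable, the bilinear form $(x,y)\mapsto\operatorname T_{L|K}(xy)$ on $L$ is nondegenerate; hence $\mathfrak X_L=\{y\in L:\operatorname T_{L|K}(\Lambda y)\subseteq\mathcal O\}$ is again a full $\mathcal O$‑lattice and the trace form induces a perfect pairing $\Lambda\times\mathfrak X_L\to\mathcal O$, whence, after reducing modulo the appropriate power of $\eta$, a perfect pairing $\Lambda/\eta^{n+1}\Lambda\times\mathfrak X_L/\eta^{n+1}\mathfrak X_L\to\mathcal O/\eta^{n+1}\mathcal O$; in particular every $\mathcal O$‑linear functional of the sort produced in Step 2 is represented by $x\mapsto\operatorname T_{L|K}(x\psi)$ for a uniquely determined class $\psi\in\mathfrak X_L/\eta^{n+1}\mathfrak X_L$, the integrality $c_\beta(\Lambda)\subseteq\mathcal O/\eta^n\mathcal O$ being exactly what forces $\psi$ to lie in $\mathfrak X_L$ rather than merely in $L$. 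Setting $\psi_{L,n}(\beta)$ equal to this class gives $(\alpha,\beta)_{\rho,L,n}=\operatorname T_{L|K}(\lambda_\rho(\alpha)\psi_{L,n}(\beta))\cdot_\rho v_n$ for all $\alpha\in\mathfrak p_L$; uniqueness follows because two choices would differ by an element $\delta\in\mathfrak X_L$ with $\operatorname T_{L|K}(\Lambda\delta)$ in the annihilator of $v_n$, forcing $\delta\in\eta^{n+1}\mathfrak X_L$ by perfectness of the reduced pairing; and $\psi_{L,n}$ is a homomorphism on $L^n$ because $\beta\mapsto c_\beta$ is additive and the duality isomorphism is $\mathcal O$‑linear.

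\textbf{Main obstacle.} The delicate ingredient is the analytic input of Step 2 in positive characteristic — that for a height‑one formal Drinfeld module the power series $\lambda_\rho$ genuinely converges on $\mathfrak p_L$, has kernel precisely the torsion in $L$, and sends $\mathfrak p_L$ onto a full lattice — this is where the ``power series, not polynomials'' subtlety flagged in the introduction must be handled. The second point requiring care is the precise bookkeeping of powers of $\eta$ in Step 3: one must pin down the index of $\rho_{\eta^n}(\mathfrak p_L)+(L\cap W_\rho)$ in $\mathfrak p_L$ and the way $\mathfrak X_L$ interacts with it, so that the class $\psi_{L,n}(\beta)$ is well defined and unique exactly modulo $\eta^{n+1}\mathfrak X_L$. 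The separability hypothesis is used once and essentially: without it the trace form degenerates, $\mathfrak X_L$ need not be a lattice, and a single element of $L$ cannot represent the functional $c_\beta$.
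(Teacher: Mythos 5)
Your overall strategy is the same as the paper's: push the pairing through $\lambda_{\rho}$ to get, for each $\beta\in L^{n}$, an $\mathcal{O}$-linear functional on $\lambda_{\rho}(\mathfrak{p}_{L})$, and then represent that functional by an element of the trace-dual lattice $\mathfrak{X}_{L}$. The paper packages the duality as the surjection $L\to\operatorname{Hom}_{\mathcal{O}}(\lambda_{\rho}(\mathfrak{p}_{L}),K/\mathcal{O})$ with kernel $\mathfrak{X}_{L}$ (Lemma \ref{surjmap}, using injectivity of $K/\mathcal{O}$), whereas you use perfectness of the reduced pairing $\Lambda/\eta^{N}\Lambda\times\mathfrak{X}_{L}/\eta^{N}\mathfrak{X}_{L}\to\mathcal{O}/\eta^{N}\mathcal{O}$; these are equivalent, and your Steps 1--2 match the paper's construction of $h_{\beta}$ and $g_{\beta}$ via Lemma \ref{lemmadefrho}.

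There is, however, one concrete problem in Step 3: the uniqueness you claim modulo $\eta^{n+1}\mathfrak{X}_{L}$ does not hold, and your justification of it is where the argument breaks. The annihilator of $v_{n}$ under $a\mapsto a\cdot_{\rho}v_{n}$ is $\eta^{n}\mathcal{O}$ (not $\eta^{n+1}\mathcal{O}$), since $W_{\rho}^{n}\cong\mathcal{O}/\eta^{n}\mathcal{O}$. Hence two representatives $\psi,\psi'$ satisfying the displayed identity differ by $\delta\in\mathfrak{X}_{L}$ with $\operatorname{T}_{L|K}(\Lambda\delta)\subset\eta^{n}\mathcal{O}$, which by perfectness forces only $\delta\in\eta^{n}\mathfrak{X}_{L}$; any $\delta\in\eta^{n}\mathfrak{X}_{L}\setminus\eta^{n+1}\mathfrak{X}_{L}$ gives a second valid class in $\mathfrak{X}_{L}/\eta^{n+1}\mathfrak{X}_{L}$. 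The correct codomain is $\mathfrak{X}_{L}/\eta^{n}\mathfrak{X}_{L}$, which is what the paper's construction actually produces ($y\in\eta^{-n}\mathfrak{X}_{L}/\mathfrak{X}_{L}$, $\psi_{L,n}(\beta)=\eta^{n}y$) and what Proposition \ref{WilesProp} states; the exponent $n+1$ in the statement you were given is an inconsistency of the paper's introduction. With $\eta^{n}$ in place of $\eta^{n+1}$ throughout Step 3, your argument closes. A smaller slip: the coefficients $c_{i}$ of $\lambda_{\rho}$ do \emph{not} have valuations tending to infinity (they satisfy only $\mu(c_{i})\ge -i$); convergence on $\mathfrak{p}_{L}$ comes from $\mu(c_{i}x^{q^{i}})\ge -i+q^{i}\mu(x)\to\infty$, as in Lemma \ref{lemmadefrho}\ref{coefflamda}.
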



Proposition \ref{iwasawamap} is the analogue of Proposition 14 of \cite{Iwasawa2}. The map $\psi_{L,n}$ is the so-called Iwasawa map introduced in loc. cit. In the case where $L=E_{\rho}^n$ and $\rho$ is such that $ \rho_{\eta} \equiv \tau^{m_0} \mod \mathfrak{p}_H$, we can give an explicit form of $\psi_{L,n}$ in the following theorem.

\begin{theorem} (Theorem \ref{mainThm})
Suppose that $L=E_{\rho}^n$ and that $ \rho_{\eta} \equiv \tau^{m_0} \mod \mathfrak{p}_H.$ This means that if $\rho_{\eta}=\sum b_i \tau^i$, all the coefficients $b_i$ are in $\mathfrak{p}_H$ except for $b_{m_0}$.
Let $\alpha \in \mathfrak{p}_L$ such that $\mu(\alpha) \ge \frac{nm_0}{q}+\frac{1}{q-1}+ \frac{1}{q^{nm_0}(q-1)}$. Then for all $\beta \in L^{\times}$, we have 
\begin{equation} \label{eqresult}
     (\alpha,\beta)_{\rho,L,n}= \frac{1}{\eta^n} T_{L|K}(\lambda_{\rho}(\alpha)\delta_n(\beta)) \cdot_{\rho} v_n,
\end{equation}
where $\delta_n: L^{\times} \to \mathfrak{p}_L/\mathcal{D}_n$ is a group homomorphism defined as follows:
for $\beta \in L^{\times}$, choose a power series $f(X) \in \mathcal{O}_H((X))^{\times}$ such that $f(v_n)=\beta$, and set
\begin{equation} \label{deltadef}
    \delta_n(\beta):= \dfrac{f'(v_n)}{\beta} \mod \mathcal{D}_n.
\end{equation} 
For more details, see Lemma \ref{propertiesDelta} and Lemma \ref{isdefinedP2} .
\end{theorem}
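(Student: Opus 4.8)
The plan is to reduce the theorem to the abstract Iwasawa-map description already available in Proposition~\ref{iwasawamap}, and then identify the map $\psi_{L,n}$ (restricted to $L^n$, and suitably extended to all of $L^\times$) with $\frac{1}{\eta^n}\delta_n$ under the hypothesis $\rho_\eta\equiv\tau^{m_0}\bmod\mathfrak p_H$. First I would set up the Coleman-type machinery: given $\beta\in L^\times=E_\rho^{n\,\times}$, since $E_\rho^n=H(v_n)$ is totally ramified over $H$ with uniformizer $v_n$ (a root of a separable Eisenstein polynomial over $\mathcal O_H$ of the expected degree), every element of $L$ is a Laurent series in $v_n$ with coefficients in $\mathcal O_H$, so one can choose $f(X)\in\mathcal O_H((X))^\times$ with $f(v_n)=\beta$; the point $v_n$ being a uniformizer means $f$ can be normalized so that $\operatorname{ord}_{\tau}$-considerations make $f'(v_n)/\beta\in\mathfrak p_L$, and Lemma~\ref{isdefinedP2} guarantees the class of $f'(v_n)/\beta$ modulo $\mathcal D_n$ is independent of the choice of $f$. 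That the resulting $\delta_n$ is a group homomorphism is the logarithmic-derivative identity $(fg)'/(fg)=f'/f+g'/g$ evaluated at $v_n$, which Lemma~\ref{propertiesDelta} records.

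The core computation is to evaluate the Kummer pairing $(\alpha,\beta)_{\rho,L,n}$ directly for $\alpha$ of sufficiently large valuation. Following Wiles's strategy as adapted by Angl\`es and by Bars--Longhi, I would write $\beta=f(v_n)$ and compute $\Phi_L(\beta)$ acting on a chosen $\xi$ with $\rho_{\eta^n}(\xi)=\alpha$. The valuation hypothesis $\mu(\alpha)\ge \tfrac{nm_0}{q}+\tfrac1{q-1}+\tfrac1{q^{nm_0}(q-1)}$ is exactly what is needed so that $\xi$ lies in a range where the logarithm $\lambda_\rho$ and the action of $\rho_{\eta^n}$ are controlled (the $q$-th roots in $\rho_{\eta^n}$ force the $1/q$ factor; the $\tfrac1{q-1}$ and $\tfrac1{q^{nm_0}(q-1)}$ terms come from the valuations of $v_n$ and of torsion points), so that higher-order terms in the relevant power series expansions vanish modulo the required precision. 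One then expresses the local symbol via the explicit reciprocity map of Lubin--Tate / Drinfeld-module type for $\rho_\eta\equiv\tau^{m_0}$: because $\rho_\eta$ is congruent to pure $\tau^{m_0}$ mod $\mathfrak p_H$, the Coleman norm operator and the twisted-Frobenius structure collapse enough that $\Phi_L(\beta)(\xi)-\xi$ becomes, after passing through the logarithm, the trace pairing $\tfrac1{\eta^n}T_{L|K}(\lambda_\rho(\alpha)\,\delta_n(\beta))\cdot_\rho v_n$.

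To finish I would match this against Proposition~\ref{iwasawamap}: both sides of \eqref{eqresult} are additive in $\beta$ and $\mathcal O$-linear-ish in $\alpha$ through $\lambda_\rho(\alpha)$, and they agree on the dense/large-valuation range, so by the uniqueness clause of Proposition~\ref{iwasawamap} the identity $\psi_{L,n}(\beta)\equiv\tfrac1{\eta^n}\delta_n(\beta)$ holds on $L^n$, and then one checks the formula \eqref{eqresult} extends from $L^n$ to all of $\beta\in L^\times$ (the extra classes are handled because for $\alpha$ of large valuation the left side vanishes on $L\cap W_\rho$-type obstructions by the valuation bound, so there is no discrepancy). The main obstacle I expect is precisely the precision bookkeeping in the core computation: tracking how the valuation bound on $\alpha$ interacts with (a) the non-polynomial nature of $\rho_\eta$ — its tail of coefficients in $\mathfrak p_H$ contributes error terms that must be shown negligible modulo $\mathcal D_n$ and modulo $\eta^{n+1}$ — and (b) the valuations of $v_n$ and of the auxiliary torsion points used to trivialize the Galois action, so that the Coleman series $f$ can be differentiated and evaluated at $v_n$ without losing the needed congruence. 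This is where the argument genuinely diverges from the polynomial case of \cite{angles,Ignazio}, and where Lemma~\ref{propertiesDelta} and Lemma~\ref{isdefinedP2} must be invoked carefully to absorb the tail contributions.
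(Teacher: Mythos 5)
Your proposal correctly identifies the general framework (Wiles's strategy, Coleman-type power series, careful valuation bookkeeping), but it has a genuine gap at its center: the step where the norm residue symbol is actually evaluated is asserted rather than proved. You write that ``the Coleman norm operator and the twisted-Frobenius structure collapse enough that $\Phi_L(\beta)(\xi)-\xi$ becomes \dots the trace pairing,'' but this is a restatement of the theorem, not an argument. The paper's proof supplies a concrete mechanism that your sketch omits entirely: (a) twist $\rho$ by the invertible series $r$ of Proposition \ref{existencer} so that the twisted module $\rho'$ satisfies $(x,x)_{\rho'}=0$ (Lemma \ref{condrhoprime}); (b) use the resulting identity $(c,1-b)=(\tfrac{bc}{1-b},b^{-1})$ (Lemma \ref{lemmecb}) to reduce arbitrary units, via the basis $1-\zeta\pi_n'^{\,j}$, to the pairing against one specific prime $\pi_n=\operatorname{N}_{m,n}(v_m)$; and (c) compute $(\alpha,\pi_n)_{L,n}$ in Lemma \ref{equality} by expanding $0=(\alpha_m+v_m,(1+b_m)v_m)$, killing $(\alpha_m,1+b_m)$ by continuity, and evaluating $(v_m,1+b_m)$ through the comparison $\Phi_K(u)(\omega)=\rho_{u^{-1}}(\omega)$ with a Lubin--Tate module (Proposition \ref{rho=phi-1}) together with the congruence $\operatorname{N}_m(1+b_m)^{-1}\equiv 1-\operatorname{T}_m(b_m)$. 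This chain is where the trace and the factor $\eta^{-n}$ actually appear, and none of it is present in your outline. The case of a general $\beta$ then follows by splitting $\beta$ into a unit times $\pi_n'$ (Propositions \ref{propunit} and \ref{resforVn}), not by an abstract extension from $L^n$ to $L^\times$.

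Your proposed alternative route through Proposition \ref{iwasawamap} does not repair this. The uniqueness clause there characterizes $\psi_{L,n}$ only once the pairing is known for \emph{all} $\alpha\in\mathfrak{p}_L$ and $\beta\in L^n$; it cannot be used to deduce the value of the pairing, and identifying $\psi_{L,n}$ with $\eta^{-n}\delta_n$ on the large-valuation range of $\alpha$ is exactly the content one must prove by the explicit computation above. (The paper in fact never invokes Proposition \ref{iwasawamap} in the proof of Theorem \ref{mainThm}.) A smaller inaccuracy: $\delta_n$ takes values in $\mathfrak{p}_L^{-1}/\mathcal{D}_n$, not $\mathfrak{p}_L/\mathcal{D}_n$, and the well-definedness of $[\alpha,\beta]_{L,n}$ is not a normalization of $f$ but the trace estimate of Lemma \ref{isdefinedP2} combined with Lemma \ref{different}.
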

Let $m \ge n$, let $\alpha \in \mathfrak{p}_n$ and $\alpha_m =\rho_{\eta^{m-n}}(\alpha)$. Let $\beta_m \in E_{\rho}^m$ and $\beta=\operatorname{N}_{m,n}(\beta_m)$, where $\operatorname{N}_{m,n}$ is the norm of the extension $E_{\rho}^m|E_{\rho}^n$, then 
\begin{equation} \label{eqm=n}
(\alpha_m,\beta_m)_{\rho,E_{\rho}^m, m}=  \Phi_{E_{\rho}^m}(\beta_m)(\xi)-\xi =  \Phi_{E_{\rho}^n}(\operatorname{N}_{m,n}(\beta_m))(\xi)-\xi =(\alpha,\beta)_{\rho,E_{\rho}^n, n},
\end{equation}
where $\xi$ is a root of $\rho_{\eta^n}(X)=\alpha$, hence a root of $\rho_{\eta^m}(X)=\alpha_m$. As a consequence of this equality, we deduce that \eqref{eqresult} is also valid for all $\alpha \in \mathfrak{p}_n$ and all $\beta \in \operatorname{N}_{m,n}(E_{\rho}^m)$ for $m \ge \dfrac{q}{q-1}(2n+\dfrac{1}{2m_0})$. This recalls both Theorem 19 of Wiles \cite{Wiles} and Theorem 3.12 of Angl\`es \cite{angles}. This also implies \eqref{eqresult} for all $\alpha \in \mathfrak{p}_n$ if $\beta$ is a universal norm in $(E_{\rho}^n)^{\times}$, which is the analogue of Theorem 1 of Wiles \cite{Wiles}.
Let us consider the inverse limit $\varprojlim (E_{\rho}^n)^{\times}$ with respect to the norm maps,
and the direct limit $\varinjlim \mathfrak{p}_n$ with respect to the maps
\begin{align*}
    \mathfrak{p}_n & \to \mathfrak{p}_m \numberthis  \\
    \alpha_n & \mapsto \rho_{\eta^{m-n}}(\alpha_n).
\end{align*}
We can define a limit form of $(~,~)_{\rho,L,n}$ as follows

\begin{equation} \label{defpairinglim}
(\alpha,\beta)_{\rho}=(\alpha_n,\beta_n)_{\rho, E_{\rho}^n, n}    
\end{equation}
for sufficiently large $n$, where $\alpha=(\alpha_n)_n \in \varinjlim \mathfrak{p}_n$ and $\beta =(\beta_n)_n \in \varprojlim E_{\rho}^n$. The limit form \eqref{defpairinglim} is well defined due to \eqref{eqm=n}. Moreover, we deduce from the discussion above that for all $\alpha=(\alpha_n)_n \in \varinjlim \mathfrak{p}_n$ and $\beta =(\beta_n)_n \in \varprojlim E_{\rho}^n$, we have 

\begin{equation}
(\alpha,\beta)_{\rho}=(\alpha_n,\beta_n)_{\rho, E_{\rho}^n, n} = \frac{1}{\eta^n} T_{L|K}(\lambda_{\rho}(\alpha_n)\delta_n(\beta_n)) \cdot_{\rho} v_n
\end{equation}
for sufficiently large $n$. Here, $\delta_n(\beta_n)$ can be expressed using the Coleman power series associated to $\beta \in \varprojlim E_{\rho}^n$. The existence of such power series was proved by Oukhaba in \cite{Oukhaba}. This gives a generalization of  Theorem 23 of Longhi-Bars \cite{Ignazio} proved for formal Drinfeld modules obtained from sign-normalized rank 1 Drinfeld modules. To go further, one may ask if any explicit reciprocity laws can be proved for all formal Drinfeld modules having stable reduction of height 1. We plan to address this question in a future work. In another request, we are interested in considering local fields of higher dimension in the vein of the work of Jorge Florez  \cite{Florez2,Florez1}.
\section{The Kummer pairing and first properties} \label{pairing1}
In this section, we fix a positive integer $n$ and a finite extension $L$ of $E_{\rho}^n$. In particular, we have $W_{\rho}^n \subset L$.
\begin{lemma} \label{existenceXi}
Let $\alpha \in \mathfrak{p}_L$. There exists an element $\xi$ in $\mathfrak{p}_{\Omega}$ such that $\rho_{\eta^n}(\xi)=\alpha$. Moreover, the extension $L(\xi)|L$ is abelian, of degree $\le q^{nm_0}$, and independent of the choice of $\xi$ satisfying $\rho_{\eta^n}(\xi)=\alpha$.
\end{lemma}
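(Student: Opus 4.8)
The plan is to exhibit $\xi$ as a root of the polynomial $\rho_{\eta^n}(X) - \alpha$, analyze this polynomial via the Weierstrass preparation theorem (as was already done in the excerpt for the torsion points), and then deduce abelianness by a Kummer-theoretic argument using the $\mathcal{O}$-module structure and the fact that $W_\rho^n \subset L$.

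First I would establish existence. Since $\rho$ has stable reduction with $\operatorname{ht}(\bar\rho)=1$, one has $\operatorname{ht}(\bar\rho_{\eta^n}) = nm_0$, i.e. $\rho_{\eta^n}(X) = \sum_{i\ge 0} c_i X^{q^i}$ with $c_0 = \eta^n$, the coefficients $c_i$ for $0 < i < nm_0$ lying in $\mathfrak{p}_H$, and $c_{nm_0}$ a unit. Consider $g(X) = \rho_{\eta^n}(X) - \alpha \in \mathcal{O}_L((X))$; actually $g \in \mathcal{O}_L[[X]]$ after truncating, but the cleanest route is to note $\rho_{\eta^n}$ is a nonzero element of $\mathcal{O}_H\{\{\tau\}\}$ of $\tau$-order $nm_0$, hence by the Weierstrass preparation theorem its associated power series $\rho_{\eta^n}(X)$ factors as (unit)$\times$(distinguished polynomial of degree $q^{nm_0}$) over $\mathcal{O}_{\bar\Omega}$. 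Subtracting the constant $\alpha$ with $\mu(\alpha) > 0$ does not change the Newton-polygon conclusion that all $q^{nm_0}$ roots lie in $\mathfrak{p}_\Omega$: one checks that $g(X)/(\text{unit})$ is again distinguished of degree $q^{nm_0}$, so it has $q^{nm_0}$ roots, all of positive valuation, in $\bar\Omega$, and since $g$ is (essentially) a polynomial over a complete field these roots in fact lie in $\Omega$. This gives $\xi \in \mathfrak{p}_\Omega$ with $\rho_{\eta^n}(\xi) = \alpha$, and simultaneously shows $[L(\xi):L] \le q^{nm_0}$.

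Next I would prove $L(\xi)|L$ is abelian and independent of the choice of $\xi$. If $\xi'$ is another root of $\rho_{\eta^n}(X) = \alpha$, then $\rho_{\eta^n}(\xi - \xi') = \rho_{\eta^n}(\xi) - \rho_{\eta^n}(\xi') = 0$ by $\mathcal{O}$-linearity of $\rho_{\eta^n}$, so $\xi - \xi' \in V_\rho^{nm_0} = W_\rho^n \subset L$. Hence $L(\xi') = L(\xi)$, giving independence; and the full set of roots of $\rho_{\eta^n}(X) - \alpha$ is the coset $\xi + W_\rho^n$, which is contained in $L(\xi)$, so $L(\xi)|L$ is Galois. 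For $\sigma, \tau \in \operatorname{Gal}(L(\xi)|L)$, both $\sigma(\xi) - \xi$ and $\tau(\xi) - \xi$ lie in $W_\rho^n$; using $\mathcal{O}$-linearity again one computes $\sigma\tau(\xi) - \xi = \big(\sigma(\xi)-\xi\big) +_\rho \big(\tau(\xi)-\xi\big)$ in the $\mathcal{O}$-module $W_\rho^n$ (here one uses that $\sigma$ fixes $W_\rho^n$ pointwise, since $W_\rho^n \subset L$), which is visibly symmetric in $\sigma$ and $\tau$. Therefore the commutator $\sigma\tau\sigma^{-1}\tau^{-1}$ fixes $\xi$ and the extension is abelian. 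Concretely, the map $\sigma \mapsto \sigma(\xi) - \xi$ is an injective homomorphism $\operatorname{Gal}(L(\xi)|L) \hookrightarrow W_\rho^n$, which reproves both abelianness and the degree bound.

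The main obstacle I anticipate is the existence half carried out rigorously: one must be careful that $\rho_{\eta^n}$ is a genuine power series, not a polynomial, so $\rho_{\eta^n}(X) - \alpha$ is an element of $\mathcal{O}_{\bar\Omega}[[X]]$ rather than a polynomial, and one needs the Weierstrass preparation theorem over the complete local ring $\mathcal{O}_{\bar\Omega}$ to extract a degree-$q^{nm_0}$ distinguished polynomial factor whose roots account for all solutions of positive valuation — and then one must argue these roots actually lie in the algebraic closure $\Omega$ (not merely in $\bar\Omega$), which follows because the distinguished polynomial has coefficients that can be taken in $\mathcal{O}_L$ up to the required precision, or alternatively because $\alpha \in L \subset \Omega$ and $\xi$ is algebraic over $L$. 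Handling this convergence/precision bookkeeping cleanly, in the spirit of the Weierstrass-preparation argument already invoked for $V_\rho^n$ in the introduction, is the technical heart of the proof; everything else is formal manipulation with the $\mathcal{O}$-module structure.
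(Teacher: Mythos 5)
Your proposal is correct, and the second half (independence of the choice of $\xi$, the coset $\xi+W_{\rho}^n$ of roots, and abelianness via the injective homomorphism $\sigma\mapsto\sigma(\xi)-\xi$ into $W_{\rho}^n\subset L$) coincides with the paper's argument. The existence step, however, is organized differently. You apply the commutative Weierstrass preparation theorem to the shifted power series $\rho_{\eta^n}(X)-\alpha$ over the complete ring $\mathcal{O}_L$, extracting for each $\alpha$ a distinguished polynomial of degree $q^{nm_0}$ whose roots are the desired $\xi$. The paper instead uses Oukhaba's factorization in the \emph{twisted} power series ring, $\rho_{\pi^{nm_0}}=U_1U_{nm_0}Q_{nm_0}\cdots Q_1$ with the $U_i$ invertible and $Q_i=\tau+\pi_i$, so that $\rho_{\eta^n}=V_n\circ P_{nm_0}$ with $V_n$ invertible; the equation $\rho_{\eta^n}(X)=\alpha$ is then equivalent to the genuine polynomial equation $P_{nm_0}(X)=V_n^{-1}(\alpha)$, where $P_{nm_0}=\prod_{w\in W_{\rho}^n}(X-w)$ is a single Eisenstein-type polynomial independent of $\alpha$. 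The paper's route buys a cleaner bookkeeping (one fixed polynomial, and the same object $P_{nm_0}$ is reused later in Proposition \ref{existencer} and Lemma \ref{generatordifferent}), while yours is more self-contained but must be stated carefully: your phrase that ``$g(X)/(\text{unit})$ is again distinguished'' is literally false if the unit is the one from the factorization of $\rho_{\eta^n}(X)$ itself (dividing $u(X)P(X)-\alpha$ by $u(X)$ yields $P(X)-\alpha u(X)^{-1}$, which is not a polynomial); what is true, and what you correctly identify in your closing paragraph as the real content, is that one must apply Weierstrass preparation afresh to $\rho_{\eta^n}(X)-\alpha$, whose reduction modulo $\mathfrak{p}_L$ still has Weierstrass degree $q^{nm_0}$, obtaining a distinguished factor with coefficients in $\mathcal{O}_L$ whose roots are therefore algebraic over $L$ and of positive valuation. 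With that reading, your argument is complete and equivalent to the paper's.
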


\begin{proof}
By \S 2 of \cite{Oukhaba}, we can write $\rho_{\pi^{nm_0}}$ as 
\begin{equation}
    \rho_{\pi^{nm_0}} = U_1U_{nm_0} Q_{nm_0} Q_{nm_0-1} \cdots Q_1,
\end{equation}
where $U_i$ are invertible elements of $\mathcal{O}_H\{\{\tau\}\}$ and $Q_i = \tau + \pi_i$, each $\pi_i$ being a prime of $H$. Let 
\begin{equation} \label{defPn}
  P_{nm_0}=Q_{nm_0} Q_{nm_0-1} \cdots Q_1  
\end{equation}
then $W_{\rho}^n$ is the set of roots of $P_{nm_0}(X)$.
We denote $V_n=\rho_{u^n}U_1 U_{nm_0}$. Since $V_n$ is invertible in $\mathcal{O}_H\{\{\tau\}\}$, we have
\begin{align*}
    \rho_{\eta^n}(X)=\alpha & \iff V_n(P_{nm_0}(X))=\alpha \\
                            & \iff P_{nm_0}(X) = V_n^{-1}(\alpha) \\
                            & \iff P_{nm_0}(X) - V_n^{-1}(\alpha) =0.
\end{align*}
However, $V_n^{-1}(\alpha) \in \mathfrak{p}_L$, hence, $P_{nm_0}(X)-V_n^{-1}(\alpha)$ is a polynomial with coefficients in $L$. Therefore there exists an element $\xi$ in ${\Omega}$ such that $P_{nm_0}(\xi)-V_n^{-1}(\alpha)=0 $. Furthermore, since $0 \equiv P_{nm_0}(\xi) \equiv \xi^{q^{nm_0}} \mod \mathfrak{p}_{\Omega}$, we have $\xi \in \mathfrak{p}_{\Omega}$.
Moreover, the polynomial $P_{nm_0}(X)-V_n^{-1}(\alpha)$ is of degree $q^{nm_0}$, and all the elements of the set $\xi + W_{\rho}^n$, which we recall is a set of $q^{nm_0}$ elements, are roots of this polynomial. Hence, it is separable and $L(\xi)|L$ is a Galois extension of degree $\le q^{nm_0}$ depending only on $\alpha$. Finally, to prove that it is an abelian extension, it suffices to notice that the group homomorphism $\operatorname{Gal}(L(\xi)|L) \longrightarrow W_{\rho}^n$ defined by $\sigma \mapsto \sigma(\xi)-\xi$ is injective.
\end{proof}         

By this Lemma, we see that the map $(~,~)_{\rho,L,n}: \mathfrak{p}_L \times L^{\times} \longrightarrow W_{\rho}^n$ in (\ref{eqdefpairing}) is well defined. We omit $\rho$ in the index when there is no risk of confusion. Exactly as in \cite{Wiles,kolyvagin} we have

\begin{prop} \label{PropoertiesPairing1}
The map $(~,~)_{L,n}$ satisfies the following properties

\begin{enumerate} [label=(\roman*)]
    \item The map $(~,~)_{L,n}$ is bilinear and $\mathcal{O}$-linear in the first coordinate for the action (\ref{defactionrho}).
    \item We have $$(\alpha,\beta)_{L,n}=0  \iff \beta  \text{ is a norm from } L(\xi)\text{, where } \rho_{\eta^n}(\xi)=\alpha.$$ 
    \item Let $M$ be a finite separable extension of $L$, let $\alpha \in \mathfrak{p}_L$ and $\beta \in M^{\times}$. Then $(\alpha,\beta )_{M,n} =(\alpha,\operatorname{N}_{M|L}(\beta))_{L,n}$.
    \item Let $M$ be a finite separable extension of $L$ of degree $d$, let $\alpha \in \mathfrak{p}_M$ and $\beta \in L^{\times}$. Then $(\alpha,\beta)_{M,n}=(\operatorname{T}_{M|L}(\alpha),\beta)_{L,n}$.
    \item Suppose $L \supset E_{\rho}^m$ for $m \ge n$. Then 
    $$  (\alpha,\beta)_{L,n} = \rho_{\eta^{m-n}}((\alpha,\beta)_{L,m} ) = (\rho_{\eta^{m-n}}(\alpha),\beta)_{L,m} .$$
    \item Let $\rho'$ be a formal Drinfeld $\mathcal{O}$-module isomorphic to $\rho$, i.e there exists a power series $t$ invertible in $\mathcal{O}_H\{\{\tau\}\}$ such that $\rho'_a= t^{-1} \circ \rho_a \circ t$ for all $a \in \mathcal{O}$. Then we have $(\alpha,\beta)_{\rho',L,n}= t^{-1}((t(\alpha),\beta)_{\rho,L,n})$.
\end{enumerate}
\end{prop}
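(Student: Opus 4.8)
The proof of Proposition \ref{PropoertiesPairing1} is a collection of verifications that follow the classical pattern from \cite{Wiles} and \cite{kolyvagin}, adapted to the twisted-power-series setting. The plan is to treat the six items in a logical order, reducing later items to earlier ones and to basic properties of the norm residue map.

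\medskip
\textbf{Item (i).} Linearity in the second coordinate is immediate from the multiplicativity of $\Phi_L$: if $\rho_{\eta^n}(\xi)=\alpha$ then $\Phi_L(\beta_1\beta_2)=\Phi_L(\beta_1)\Phi_L(\beta_2)$, and since $\Phi_L(\beta_i)(\xi)-\xi\in W_\rho^n\subset L$ is fixed by $\Phi_L(\beta_j)$, the cocycle splits additively. For $\mathcal O$-linearity in the first coordinate, given $a\in\mathcal O$ and $\xi$ with $\rho_{\eta^n}(\xi)=\alpha$, the element $\rho_a(\xi)$ satisfies $\rho_{\eta^n}(\rho_a(\xi))=\rho_a(\alpha)=a\cdot_\rho\alpha$ because $\rho$ is a ring homomorphism and $\mathcal O$ is commutative; applying $\Phi_L(\beta)$ and using that $\Phi_L(\beta)$ commutes with the (additive, $\mathcal O$-linear) operator $\rho_a$ (its coefficients lie in $\mathcal O_H\subset L$ and are therefore fixed) gives $(a\cdot_\rho\alpha,\beta)_{L,n}=\rho_a((\alpha,\beta)_{L,n})=a\cdot_\rho(\alpha,\beta)_{L,n}$. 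Additivity in the first coordinate is the same computation with $\rho_a$ replaced by ordinary addition, which is again an $L$-linear operator commuting with $\Phi_L(\beta)$.

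\medskip
\textbf{Items (ii), (iii), (iv).} Item (ii) is the defining property of the norm residue map: $(\alpha,\beta)_{L,n}=0$ means $\Phi_L(\beta)$ fixes $\xi$, hence fixes all of $L(\xi)$ (as $L(\xi)=L(\xi)$ is generated by $\xi$ over $L$), i.e.\ $\Phi_L(\beta)$ is trivial on $\operatorname{Gal}(L(\xi)|L)$, which by local class field theory is equivalent to $\beta\in\operatorname{N}_{L(\xi)|L}(L(\xi)^\times)$; one must note, using Lemma \ref{existenceXi}, that $L(\xi)|L$ is abelian so that the norm group is well defined and the Artin map on $\operatorname{Gal}(L(\xi)|L)$ is the restriction of $\Phi_L$. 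For (iii), with $\beta\in M^\times$, functoriality of the Artin map gives $\Phi_M(\beta)|_{L^{ab}}=\Phi_L(\operatorname{N}_{M|L}(\beta))$; choosing one $\xi$ with $\rho_{\eta^n}(\xi)=\alpha$ (valid since $\alpha\in\mathfrak p_L\subset\mathfrak p_M$) and evaluating both sides at $\xi$ — which lies in an abelian extension of $L$, hence of $M$ — yields the claim. For (iv) I would combine (iii) with (i): embedding $L\hookrightarrow M$, write $(\alpha,\beta)_{M,n}$ and use that for $\beta\in L^\times$ one has a Galois-descent relation. Concretely, pick $\xi$ with $\rho_{\eta^n}(\xi)=\alpha$; then summing the cocycle $\sigma\mapsto\sigma\xi-\xi$ over $\operatorname{Gal}(M|L)$-conjugates and using that $\Phi_M(\beta)$ for $\beta\in L^\times$ acts compatibly with this Galois action reduces $(\alpha,\beta)_{M,n}$ to $(\operatorname{T}_{M|L}(\alpha),\beta)_{L,n}$; the bilinearity from (i) lets one pass the trace (a finite $\mathcal O$-linear sum of conjugates) through the first slot. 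This descent step is the one requiring the most care, since one must check that the Frobenius substitution for an element of $L^\times$ genuinely distributes over the $M|L$-conjugates of $\xi$; the standard reference argument is that of \cite[\S1]{Wiles}.

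\medskip
\textbf{Items (v) and (vi).} For (v), assume $L\supset E_\rho^m$ with $m\ge n$. If $\rho_{\eta^m}(\xi)=\alpha$ then $\xi':=\rho_{\eta^{m-n}}(\xi)$ satisfies $\rho_{\eta^n}(\xi')=\alpha$, so $(\alpha,\beta)_{L,n}=\Phi_L(\beta)(\xi')-\xi'=\rho_{\eta^{m-n}}(\Phi_L(\beta)(\xi)-\xi)=\rho_{\eta^{m-n}}((\alpha,\beta)_{L,m})$, where we used that $\rho_{\eta^{m-n}}$ is an additive operator with coefficients fixed by $\Phi_L(\beta)$; the second equality in (v) follows from (i) since $\rho_{\eta^{m-n}}(\alpha)=\eta^{m-n}\cdot_\rho\alpha$ and the pairing is $\mathcal O$-linear in the first slot, or directly by taking $\xi''$ with $\rho_{\eta^m}(\xi'')=\rho_{\eta^{m-n}}(\alpha)$. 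Finally (vi): given the isomorphism $\rho'_a=t^{-1}\circ\rho_a\circ t$, if $\rho'_{\eta^n}(\xi)=\alpha$ then $t^{-1}(\rho_{\eta^n}(t(\xi)))=\alpha$, i.e.\ $\rho_{\eta^n}(t(\xi))=t(\alpha)$; hence a solution of the $\rho$-equation for $t(\alpha)$ is $t(\xi)$, and $(t(\alpha),\beta)_{\rho,L,n}=\Phi_L(\beta)(t(\xi))-t(\xi)=t(\Phi_L(\beta)(\xi))-t(\xi)=t((\alpha,\beta)_{\rho',L,n})$, using that $t$ has coefficients in $\mathcal O_H\subset L$ and so commutes with $\Phi_L(\beta)$. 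Rearranging gives $(\alpha,\beta)_{\rho',L,n}=t^{-1}((t(\alpha),\beta)_{\rho,L,n})$ as asserted (one checks $t$ restricts to a bijection of the relevant torsion and valuation groups since it is invertible in $\mathcal O_H\{\{\tau\}\}$).

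\medskip
The recurring technical point throughout — and the one I expect to be the main obstacle — is justifying that the Frobenius substitutions $\Phi_L(\beta)(\sigma)$ commute with the additive operators coming from $\rho$ and from $t$, together with the Galois-descent argument in (iv). Both reduce to the observation that all these operators are given by twisted power series with coefficients in the unramified field $H$, hence in any $L\supset E_\rho^n$, and so are fixed pointwise by $\operatorname{Gal}(L^{ab}|L)$; once this is in place every item is a short manipulation of cocycles, exactly as in \cite{Wiles,kolyvagin}.
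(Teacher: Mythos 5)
Your proposal is correct and follows the same route as the paper. The paper in fact declares items (i), (ii), (iii), (v) and (vi) straightforward and writes out only (iv); your verifications of those five items match what the paper has in mind (the recurring point you isolate --- that $\rho_a$, $\rho_{\eta^{m-n}}$ and $t$ are additive twisted power series with coefficients in $\mathcal{O}_H\subset L$, hence commute with every element of $\operatorname{Gal}(L^{ab}|L)$ --- is exactly the justification needed).

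The one place where your write-up is thinner than the paper is precisely (iv), the only item the paper proves. You correctly identify the shape of the argument but leave the engine as a citation. What the paper actually uses is the compatibility of the norm residue map with the \emph{transfer}: for $\beta\in L^{\times}$ one has $\Phi_M(\beta)=t_{M|L}(\Phi_L(\beta))$ on $M^{ab}$. Writing $G=\operatorname{Gal}(M^{ab}|L)\supset H=\operatorname{Gal}(M^{ab}|M)$ and choosing extensions $\tilde\sigma_i$ of the embeddings $\sigma_i:M\hookrightarrow\Omega$ over $L$ as coset representatives, one has $\tilde\sigma_i\tilde\Phi_L(\beta)=h_i\tilde\sigma_j$ with $h_i\in H$, the transfer is $\prod_i h_i$, and since each $h_i(\xi)-\xi\in W_\rho^n\subset L$ the product of the $h_i$ applied to $\xi$ telescopes into the sum $\sum_i(h_i(\xi)-\xi)$, which one then identifies with $\tilde\Phi_L(\beta)\bigl(\sum_j\tilde\sigma_j^{-1}(\xi)\bigr)-\sum_j\tilde\sigma_j^{-1}(\xi)$; finally $\sum_j\tilde\sigma_j^{-1}(\xi)$ is a root of $\rho_{\eta^n}(X)=\operatorname{T}_{M|L}(\alpha)$ by additivity of $\rho_{\eta^n}$. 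Two small imprecisions in your sketch: $M|L$ is only assumed separable, not Galois, so one must work with embeddings and cosets rather than ``$\operatorname{Gal}(M|L)$-conjugates''; and the reduction to $(\operatorname{T}_{M|L}(\alpha),\beta)_{L,n}$ is not obtained by ``passing the trace through the first slot by bilinearity'' (the conjugates $\sigma_i(\alpha)$ are not $\mathcal{O}$-multiples of $\alpha$) but by the coset computation above. Neither point changes the conclusion; filling them in gives exactly the paper's proof.
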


\begin{proof}
Since (i), (ii), (iii), (v) and (vi) are straightforward, we only prove (iv). Let $\xi$ be such that $\rho_{\eta ^n}(\xi)=\alpha$. Let $G$ be the Galois group of $M^{ab}|L$ and $H$ be the Galois group of $M^{ab}|M$. Let $\{\sigma_1,..,\sigma_d\}$ be  the embeddings of $M$ in $\Omega$ over $L$. We consider the quotient group $G/H$. Then $\{\Tilde{\sigma_1},..,\Tilde{\sigma_d}\}$, where $\Tilde{\sigma_i}$ is an extension of $\sigma_i$ to $M^{ab}$, is a set of representatives for the left cosets $H \Tilde{\sigma}$
$$ G= \sqcup_i H \Tilde{\sigma_i}.$$
We extend $\Phi_L(\beta)$ to $M^{ab}$ and denote it by $\Tilde{\Phi}_L(\beta) \in G$. Therefore, for each $i$, there exists a unique $h_i \in H$, and a unique $j$ such that 
\begin{equation} \label{equivclass}
    \Tilde{\sigma_i} \Tilde{\Phi}_L(\beta) = h_i \Tilde{\sigma_j}.
\end{equation}
By definition of the transfer map $t_{M|L}$ (see for instance \cite[\S 6.3]{Iwasawa}), we have 

\begin{align} 
    (\alpha,\beta)_{M,n} & =\Phi_M(\beta)(\xi)-\xi \nonumber \\
    & = t_{M|L}(\Phi_L(\beta))(\xi)-\xi  \nonumber \\
    & = \prod_i h_i(\xi)-\xi \nonumber \\
    &= \sum_i (h_i(\xi)-\xi) \label{pairing_m}
\end{align}

\noindent The last equality follows from the fact that $h_i(\xi)-\xi \in W_{\rho}^n \subset L$. Using the notation  $(\alpha,h_i)_{M,n}=h_i(\xi)-\xi$, we can write
\begin{equation} \label{hisum}
   \sum_i (\alpha,h_i)_{M,n} =  \Tilde{\Phi}_L(\beta)(\sum_j(\Tilde{\sigma}_j^{-1}(\xi)))-\sum_j(\Tilde{\sigma}_j^{-1}(\xi)) .
\end{equation}

\noindent Indeed, by (\ref{equivclass}) we get

\begin{align*}
    \Tilde{\Phi}_L(\beta)\Tilde{\sigma}_j^{-1}(\xi) &= \Tilde{\sigma}_i^{-1}h_i(\xi) \\
    & = \Tilde{\sigma}_i^{-1} ((\alpha,h_i)_{M,n}+\xi) \\
    & = (\alpha,h_i)_{M,n} + \Tilde{\sigma}_i^{-1}(\xi)
\end{align*}
because $(\alpha,h_i)_{M,n} \in W_{\rho}^n \subset L$. Thus, by (\ref{pairing_m}) and (\ref{hisum}), we have $(\alpha,\beta)_{M,n}= (\sum_i \sigma_i(\alpha),\beta)_{L,n}$.
\end{proof}

\section{The Iwasawa map}
In this section, we will study the so-called Iwasawa map, first introduced by Iwasawa in \cite[Proposition 14]{Iwasawa2} in the cyclotomic case. This map was generalized by Wiles \cite[Proposition 7]{Wiles} in the case of Lubin-Tate formal groups, and by Kolyvagin \cite[Proposition 3.2]{kolyvagin} in the case of formal groups of finite height. As in Section \ref{pairing1}  above, we fix a positive integer $n$ and a finite extension $L$ of $E_{\rho}^n$. We also fix a generator $v_n$ of the $\mathcal{O}$-module $W_{\rho}^n$ and we suppose that $L|K$ is separable. First, we need to introduce the logarithm $\lambda_{\rho}$ of $\rho$.

\begin{lemma} \label{lemmadefrho}
There exists a unique power series $\lambda_{\rho} \in H\{\{\tau\}\}$, called the logarithm of $\rho$, such that $\lambda_{\rho}(X) \equiv X \mod \operatorname{deg~2}$ and $\lambda_{\rho} \rho_a = a \lambda_{\rho}$ for all $a \in \mathcal{O}$. Moreover, we have
\begin{enumerate}[label=(\roman*)]
    \item \label{coefflamda} If $\lambda_{\rho}=\sum_{i \ge 0} c_i \tau^i$, then $ \mu(c_i) \ge -i$ for all $i \ge 0$. Thus the element $\lambda_{\rho}(x)=\sum_{i \ge 0} c_i x^{q^i}$ is well defined in $L$ for any $x \in \mathfrak{p}_L$.
    \item If $x \in \mathfrak{p}_{\Omega}$, then $\lambda_{\rho}(X)=0$ if and only if $x \in V_{\rho}$. Put $W_L=L \cap W_{\rho} \subset \mathfrak{p}_L$. Then the map $\lambda_{\rho}: \mathfrak{p}_L/W_L \longrightarrow  \lambda_{\rho}(\mathfrak{p}_L)$ is an isomorphism of $\mathcal{O}$-modules.
    \item Let $\mathfrak{p}_{\Omega,1}$ denote the set of all the elements $x$ of $\mathfrak{p}_{\Omega}$ such that $\mu(x) > 1/(q-1)$. The logarithm $\lambda_{\rho}$ gives an isomorphism of $\mathcal{O}$-modules from $\mathfrak{p}_{\Omega,1}$, viewed as an $\mathcal{O}$-module under the action (\ref{defactionrho}), to itself, viewed as an $\mathcal{O}$-module under the multiplication in $\Omega$. If we denote $\mathfrak{p}_{L,1} = \mathfrak{p}_{L} \cap \mathfrak{p}_{\Omega,1}$, the logarithm $\lambda_{\rho}$ also induces an isomorphism from $\mathfrak{p}_{L,1}$ to itself.
    \item The $\mathcal{O}$-module $\lambda_{\rho}(\mathfrak{p}_L)$ is free of rank $[L:K]$ and we have  $L=K \lambda_{\rho}(\mathfrak{p}_L)$.
\end{enumerate}
\end{lemma}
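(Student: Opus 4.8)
The plan is to build $\lambda_\rho$ from a single functional equation and then extract (i)--(iv) from valuation estimates on its coefficients. First I would fix a prime $\pi$ of $K$ and write $\rho_\pi=\sum_{j\ge0}b_j\tau^j$; since $D(\rho_\pi)=\gamma(\pi)=\pi$ and $\rho$ is defined over $\mathcal{O}_H$ we have $b_0=\pi$ and $b_j\in\mathcal{O}_H$ for all $j$. Seeking $\lambda=\sum_{i\ge0}c_i\tau^i$ with $c_0=1$ and $\lambda\rho_\pi=\pi\lambda$, comparison of the $\tau^n$-coefficients gives $\sum_{i=0}^n c_i b_{n-i}^{q^i}=\pi c_n$, i.e. $c_n(\pi^{q^n}-\pi)=-\sum_{i=0}^{n-1}c_i b_{n-i}^{q^i}$; as $\mu(\pi^{q^n}-\pi)=1$ for $n\ge1$, this recursion determines $(c_n)_n$ uniquely from $c_0=1$, giving both existence and uniqueness of a solution $\lambda_\rho$ of $\lambda_\rho\rho_\pi=\pi\lambda_\rho$. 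To promote this to $\lambda_\rho\rho_a=a\lambda_\rho$ for every $a\in\mathcal{O}$, I would note that $\lambda_\rho\rho_a$ and $a\lambda_\rho$ both have $\tau$-constant term $a$ (apply the ring homomorphism $D$) and both satisfy $F\rho_\pi=\pi F$ (because $\rho_a\rho_\pi=\rho_\pi\rho_a$), so they agree by the same uniqueness; in particular $\lambda_\rho$ is independent of $\pi$. Part (i) is then an immediate induction on the recursion, using $\mu(b_j)\ge0$: one gets $\mu(c_n)\ge-n$, hence $\mu(c_ix^{q^i})\ge-i+q^i\mu(x)\to\infty$ for $x\in\mathfrak{p}_L$, so $\lambda_\rho(x)=\sum c_ix^{q^i}$ converges in $L$.

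The proof of (iii) rests on the elementary observation that for $v>1/(q-1)$ the function $i\mapsto-i+q^iv$ is strictly increasing (its increments $q^i(q-1)v-1$ are positive), so its minimum over $i\ge1$ is $qv-1>v$. Thus for $\mu(x)=v>1/(q-1)$ we get $\mu(\lambda_\rho(x)-x)\ge qv-1>v$, hence $\mu(\lambda_\rho(x))=v$: $\lambda_\rho$ maps $\mathfrak{p}_{\Omega,1}$ into itself preserving the valuation, and, being additive, is injective there. For surjectivity I would use successive approximation: given $y\in\mathfrak{p}_{\Omega,1}$, put $x_0=y$ and $x_{k+1}=x_k-(\lambda_\rho(x_k)-y)$; additivity gives $e_{k+1}:=\lambda_\rho(x_{k+1})-y=-\sum_{i\ge1}c_ie_k^{q^i}$, so $\mu(e_{k+1})\ge q\mu(e_k)-1$ and therefore $\mu(e_k)-\tfrac1{q-1}\ge q^k\bigl(\mu(e_0)-\tfrac1{q-1}\bigr)\to\infty$; the sequence $(x_k)$ is Cauchy, its limit $x$ (lying in the finite, hence complete, subextension of $\Omega$ generated by $y$) satisfies $\lambda_\rho(x)=y$ and $\mu(x)=\mu(y)$. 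So $\lambda_\rho$ is a bijection of $\mathfrak{p}_{\Omega,1}$, and running the same argument inside the complete field $L$ gives it for $\mathfrak{p}_{L,1}$.

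For (ii): if $x\in V_\rho=\bigcup_nV_\rho^n$ with $\rho_{\pi^N}(x)=0$, then $\pi^N\lambda_\rho(x)=\lambda_\rho(\rho_{\pi^N}(x))=0$, so $\lambda_\rho(x)=0$. Conversely, by Weierstrass preparation in $\mathcal{O}_H\{\{\tau\}\}$ (as in the proof of Lemma \ref{existenceXi}) write $\rho_\pi=UQ$ with $U\in\mathcal{O}_H\{\{\tau\}\}^\times$ and $Q=\tau+\pi_1$, $\pi_1$ a prime of $H$; then for $z\in\mathfrak{p}_\Omega$ one has $\mu(\rho_\pi(z))=\mu(z)+\mu(z^{q-1}+\pi_1)$, which is $q\mu(z)$ if $\mu(z)<1/(q-1)$, is $\mu(z)+1$ if $\mu(z)>1/(q-1)$, and is $\ge\mu(z)+1$ if $\mu(z)=1/(q-1)$. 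Consequently, unless $x\in V_\rho$, the valuations $\mu(\rho_{\pi^N}(x))$ leave the interval $(0,1/(q-1)]$ — the only torsion point of valuation $>1/(q-1)$ being $0$, since non-zero torsion all has valuation $\le1/(q-1)$ by the Eisenstein description preceding the lemma — and thereafter increase by $1$ at each step, so $\mu(\rho_{\pi^N}(x))\to\infty$; choosing $N$ with $\rho_{\pi^N}(x)\in\mathfrak{p}_{\Omega,1}\setminus\{0\}$ and using $\lambda_\rho(\rho_{\pi^N}(x))=\pi^N\lambda_\rho(x)$ contradicts the injectivity of (iii) if $\lambda_\rho(x)=0$. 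Hence $\ker(\lambda_\rho|_{\mathfrak{p}_\Omega})=V_\rho$, and since $\lambda_\rho\rho_a=a\lambda_\rho$, the map $\lambda_\rho|_{\mathfrak{p}_L}$ is an $\mathcal{O}$-linear surjection onto $\lambda_\rho(\mathfrak{p}_L)$ with kernel $\mathfrak{p}_L\cap V_\rho=W_L$. For (iv), the bounds $\mu(c_i)\ge-i$ give $\mu(\lambda_\rho(x))\ge\inf_{i\ge0}(-i+q^i/e)=:-C$ (finite, $e=e(L|K)$) for all $x\in\mathfrak{p}_L$, so $\lambda_\rho(\mathfrak{p}_L)$ is an $\mathcal{O}$-submodule of the fractional ideal $\{y\in L:\mu(y)\ge-C\}$, a finitely generated free $\mathcal{O}$-module of rank $[L:K]$; being torsion-free it is itself free of rank $\le[L:K]$, while by (iii) it contains $\mathfrak{p}_{L,1}=\lambda_\rho(\mathfrak{p}_{L,1})$, already free of rank $[L:K]$. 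Hence its rank is exactly $[L:K]$, and $L=K\mathfrak{p}_{L,1}\subseteq K\lambda_\rho(\mathfrak{p}_L)\subseteq L$ gives $L=K\lambda_\rho(\mathfrak{p}_L)$.

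The step I expect to be the main obstacle is the converse in (ii), together with the surjectivity in (iii): one must control the valuation dynamics of $\rho_\pi$ on all of $\mathfrak{p}_\Omega$, and especially its delicate behaviour at the critical value $\mu=1/(q-1)$, the common valuation of the non-zero $\mathfrak{p}$-torsion. It is exactly the stable-reduction, height-one hypothesis — which through Weierstrass preparation renders $\rho_\pi=UQ$ with $Q$ of $\tau$-degree $1$ — that keeps this dynamics under control; with a higher-degree factor the analysis on the critical circle would be substantially more involved.
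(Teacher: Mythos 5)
Your proposal is correct, and it is considerably more self-contained than the paper's treatment: the paper delegates existence/uniqueness and parts (i)--(iii) to Rosen's \emph{Formal Drinfeld modules} (Propositions 2.1--2.4 of that paper), only sketching (iii) and proving (iv) in full. Where the paper does argue, the routes diverge in one place: for the surjectivity in (iii) the paper inverts $\lambda_\rho$ by introducing the exponential $e_\rho$ (the formal inverse in $H\{\{\tau\}\}$) and using Rosen's coefficient bound $\mu(d_i)\ge -(1+q+\cdots+q^{i-1})$ to show $e_\rho$ converges and preserves valuation on $\mathfrak{p}_{\Omega,1}$, whereas you run a successive-approximation scheme $x_{k+1}=x_k-(\lambda_\rho(x_k)-y)$ and track the error valuation; both are standard and your version has the small advantage of not requiring any estimate on the coefficients of the inverse series, at the cost of having to observe (as you correctly do) that the iterates stay in the complete finite subextension $H(y)$, since $\Omega$ itself is not complete. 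Your recursion $c_n(\pi^{q^n}-\pi)=-\sum_{i<n}c_ib_{n-i}^{q^i}$ for existence/uniqueness, the commutation trick to pass from $\rho_\pi$ to all $\rho_a$, and the valuation-dynamics argument for part (ii) via the Weierstrass factor $\rho_\pi=UQ$ with $Q=\tau+\pi_1$ (including the careful handling of the critical valuation $1/(q-1)$) are all sound and reproduce what the paper imports from Rosen. Your proof of (iv) is essentially identical to the paper's: bound $\mu(\lambda_\rho(x))$ below to embed $\lambda_\rho(\mathfrak{p}_L)$ in a fractional ideal, conclude freeness over the PID $\mathcal{O}$, and use $\mathfrak{p}_{L,1}=\lambda_\rho(\mathfrak{p}_{L,1})$ to get $L=K\lambda_\rho(\mathfrak{p}_L)$.
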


\begin{proof}

    The first three properties are proved by M. Rosen in \cite{Rosen}. For instance, the property (i) is a part of the proof of Proposition 2.1 of loc. cit. The property (ii) is exactly Proposition 2.4 of \cite{Rosen}. Finally, (iii) corresponds to Proposition 2.3 of \cite{Rosen}. Let us  give a sketch of the proof of (iii).
    Let $x \in \mathfrak{p}_{\Omega}$ such that $\mu(x) > \frac{1}{q-1}.$ By \ref{coefflamda}, we have
    $$
        \mu(c_ix^{q^i})=\mu(c_i)+q^i \mu(x) \ge -i + q^i \mu(x) >\mu(x)
    $$
    for all $i \ge 1$. Hence $\mu(\lambda_{\rho}(x))=\mu(x)$ so that $\lambda_{\rho}(x) \in \mathfrak{p}_{\Omega,1}$. Now we consider the inverse $e_{\rho}$ of $\lambda_{\rho}$ in $H\{\{\tau\}\}$. This series is called the exponential of $\rho$ and satisfies $e_{\rho}(X) \equiv X \mod \operatorname{deg}~2$ and $e_{\rho}a = \rho_a e_{\rho}$ for all $a \in \mathcal{O}$. By \cite[Proposition 2.2]{Rosen}, if we write $e_{\rho}(x)=x+\sum_{i \ge 1}d_ix^{q^i}$, we have $\mu(d_i) \ge -(1+q+\cdots + q^{i-1})$. Thus,
    $$
    \mu(d_ix^{q^i})=\mu(d_i)+q^i \mu(x) \ge -\frac{q^i-1}{q-1} + q^i \mu(x) > -(q^i-1)\mu(x) +\mu(x)=\mu(x)
    $$
    for all $i \ge 1$. Hence we have $\mu(e_{\rho}(x))=\mu(x)$. This completes the proof since $e_{\rho}$ is the formal inverse of $\lambda_{\rho}$. 
    
    As for the proof of (iv), let $x \in \mathfrak{p}_L$ and $e_L$ be the ramification index of $L|K$, then $\mu(x) \ge \displaystyle\frac{1}{e_L}$. By \ref{coefflamda}, we have 
\begin{align*}
    \mu(\lambda_{\rho}(x)) & \ge \min(\mu(x), -i+q^i \mu(x)) \\
    & \ge \min (\frac{1}{e}, -i+\frac{q^i}{e}).
\end{align*}
Thus, for a sufficiently large integer $l$, we have $\lambda_{\rho}(\mathfrak{p}_L) \subset \displaystyle\frac{1}{\pi^l}\mathcal{O}_L$. Therefore $\lambda_{\rho}(\mathfrak{p}_L)$ is free for it is a $\mathcal{O}$-submodule of the free $\mathcal{O}$-module $\displaystyle\frac{1}{\pi^l}\mathcal{O}_L$. Now let us prove that $L=K \lambda_{\rho}(\mathfrak{p}_L)$.
Clearly, we have  $K \lambda_{\rho}(\mathfrak{p}_L) \subset L$. Let $x \in L$, then we can write $x= u \pi_L^j$, where $u$ is a unit of $L$ and $\pi_L$ is a prime of $L$.  Then, for a sufficiently large integer $i$, we have $u \pi_L^j \pi^i \in \mathfrak{p}_{L,1} = \lambda_{\rho}(\mathfrak{p}_{L,1}) \subset \lambda_{\rho}(\mathfrak{p}_L) $. Therefore $x= \displaystyle  \frac{1}{\pi^i} u \pi_L^j \pi^i \in K \lambda_{\rho}(\mathfrak{p}_L)$.
\end{proof}

Since the extension $L|K$ is supposed to be separable, the bilinear map $<~,~>_L:L \times L \longrightarrow K$ defined by $<x,y>_L=\operatorname{T}_{L|K}(xy)$ is non degenerate. This gives us the classical isomorphism from $L$ to the space of $K$-linear forms from $L$ to $K$.  The pairing $<~,~>_L$ also induces the following $\mathcal{O}$-linear map
\begin{align*}
    L & \longrightarrow \operatorname{Hom}_{\mathcal{O}} (\lambda_{\rho}(\mathfrak{p}_L)  , K/\mathcal{O} ) \numberthis \label{eqn} \\
    y & \mapsto \left\{ \begin{array}{cc}
        \lambda_{\rho}(\mathfrak{p}_L)  \longrightarrow & K/\mathcal{O} \\
       x  \longmapsto & <x,y>_L \mod \mathcal{O}
    \end{array}
    \right.
\end{align*}

\begin{lemma} \label{surjmap}
The map \eqref{eqn} is a surjective homomorphism of $\mathcal{O}$-modules, with kernel 
\begin{equation} \label{defXL}
     \mathfrak{X}_L :=\{ y \in L;~ <x,y>_L \in \mathcal{O}~ \forall x \in \lambda_{\rho}(\mathfrak{p}_L)\}.
\end{equation}
\end{lemma}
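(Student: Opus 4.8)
The plan is to verify the three separate claims: that \eqref{eqn} is a homomorphism of $\mathcal{O}$-modules, that its kernel is exactly $\mathfrak{X}_L$, and that it is surjective. The first two are essentially formal. Linearity is immediate from the bilinearity of $\langle~,~\rangle_L$ and the fact that $\lambda_\rho(\mathfrak{p}_L)$ is an $\mathcal{O}$-submodule of $L$, so the assignment $y \mapsto (x \mapsto \langle x,y\rangle_L \bmod \mathcal{O})$ respects addition and the $\mathcal{O}$-action on both sides. The kernel computation is just unwinding definitions: $y$ lies in the kernel iff $\langle x,y\rangle_L \in \mathcal{O}$ for every $x \in \lambda_\rho(\mathfrak{p}_L)$, which is precisely the defining condition for $\mathfrak{X}_L$ in \eqref{defXL}.

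The substance of the lemma is surjectivity, and here the plan is to use non-degeneracy of $\langle~,~\rangle_L$ together with the structural information from Lemma \ref{lemmadefrho}(iv), namely that $\lambda_\rho(\mathfrak{p}_L)$ is a free $\mathcal{O}$-module of rank $[L:K]$ with $K\lambda_\rho(\mathfrak{p}_L)=L$. Fix an $\mathcal{O}$-basis $x_1,\dots,x_d$ of $\lambda_\rho(\mathfrak{p}_L)$ (with $d=[L:K]$); since $K\lambda_\rho(\mathfrak{p}_L)=L$, this is also a $K$-basis of $L$. Because $\langle~,~\rangle_L$ is non-degenerate, there is a dual $K$-basis $y_1,\dots,y_d$ of $L$ characterized by $\langle x_i,y_j\rangle_L=\delta_{ij}$. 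Given an arbitrary $\varphi \in \operatorname{Hom}_{\mathcal{O}}(\lambda_\rho(\mathfrak{p}_L),K/\mathcal{O})$, lift each $\varphi(x_i) \in K/\mathcal{O}$ to an element $a_i \in K$, and set $y=\sum_i a_i y_i \in L$. Then for each $i$ we have $\langle x_i,y\rangle_L=a_i$, so the image of $y$ under \eqref{eqn} sends $x_i \mapsto a_i \bmod \mathcal{O}=\varphi(x_i)$; since the $x_i$ generate $\lambda_\rho(\mathfrak{p}_L)$ over $\mathcal{O}$ and both maps are $\mathcal{O}$-linear, the image of $y$ equals $\varphi$. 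This proves surjectivity.

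The main obstacle, such as it is, is making sure that the dual-basis argument is legitimate in this non-Archimedean setting — but separability of $L|K$ guarantees the trace form is non-degenerate (as already noted just before the lemma), so the dual basis exists, and no completeness or convergence issue arises because everything takes place in the finite-dimensional $K$-vector space $L$. One should also remark that the choice of lifts $a_i$ of $\varphi(x_i)$ is harmless: a different choice changes $y$ by an element of $\sum_i \mathcal{O}y_i$, which by construction pairs into $\mathcal{O}$ against every $x_i$ and hence lies in $\mathfrak{X}_L=\ker$ of \eqref{eqn}, consistent with the map being well-defined. This also exhibits, incidentally, that \eqref{eqn} descends to an isomorphism $L/\mathfrak{X}_L \xrightarrow{\sim} \operatorname{Hom}_{\mathcal{O}}(\lambda_\rho(\mathfrak{p}_L),K/\mathcal{O})$, which is the form in which the result will be used in the sequel.
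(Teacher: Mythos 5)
Your argument is correct, and for surjectivity it takes a genuinely different (and more direct) route than the paper. The paper first extends $\gamma:\lambda_{\rho}(\mathfrak{p}_L)\to K/\mathcal{O}$ to an $\mathcal{O}$-linear map on all of $L$ using the fact that $K/\mathcal{O}$ is divisible, hence injective as an $\mathcal{O}$-module; it then lifts that extension to a $K$-linear form $L\to K$ by prescribing lifts of its values on a $K$-basis $e_1,\dots,e_d$ of $L$ chosen inside $\lambda_{\rho}(\mathfrak{p}_L)$ (possible since $L=K\lambda_{\rho}(\mathfrak{p}_L)$), and finally represents that form by some $y\in L$ via non-degeneracy of the trace pairing. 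You skip the injective-module step entirely: you take an $\mathcal{O}$-basis of the free module $\lambda_{\rho}(\mathfrak{p}_L)$ (of rank $[L:K]$ by Lemma~\ref{lemmadefrho}~(iv)), observe it is automatically a $K$-basis of $L$, pass to the dual basis for $\langle\ ,\ \rangle_L$, and assemble $y$ from arbitrary lifts of the $\varphi(x_i)$. Both proofs rest on the same two inputs, namely Lemma~\ref{lemmadefrho}~(iv) and the separability of $L|K$; yours is more elementary, and it has the extra merit that the agreement of the two $\mathcal{O}$-linear maps is verified on an $\mathcal{O}$-generating set of $\lambda_{\rho}(\mathfrak{p}_L)$, whereas the paper checks agreement only on the $K$-basis $e_i$, which need not generate $\lambda_{\rho}(\mathfrak{p}_L)$ over $\mathcal{O}$ unless it is chosen, as in your version, to be an $\mathcal{O}$-basis of that module. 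The paper's injectivity argument, on the other hand, is the more robust one in settings where freeness of full rank is not available. Your closing observation that the map descends to an isomorphism $L/\mathfrak{X}_L\cong\operatorname{Hom}_{\mathcal{O}}(\lambda_{\rho}(\mathfrak{p}_L),K/\mathcal{O})$ is precisely how the lemma is put to use in the construction of $\psi_{L,n}$.
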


\begin{proof}
It is clear that $\mathfrak{X}_L$ is the kernel of this map. Let us prove that the map is surjective. To do so, let $\gamma:\lambda_{\rho}(\mathfrak{p}_L)\rightarrow K/\mathcal{O}$ be an $\mathcal{O}$-linear map. The $\mathcal{O}$-module $K/\mathcal{O}$ is divisible, hence it is an injective module  by \cite[Lemma 4.2]{Lang}. Therefore there exists a homomorphism of $\mathcal{O}$-modules $\Tilde{\gamma}$ such that the following diagram

\begin{center}
\begin{tikzcd}
L \arrow[rd, "\Tilde{\gamma}"]  &  \\
\lambda_{\rho}(\mathfrak{p}_L) \arrow[u] \arrow[r, "\gamma"'] & K/\mathcal{O}
\end{tikzcd}
\end{center}
is commutative, the left hand map being the inclusion map. Let $\{e_1,\ldots,e_d\}$ be a basis of $L$ as a $K$-vector space. Since $L=K\lambda_{\rho}(\mathfrak{p}_L)$ by Lemma \ref{lemmadefrho} (iv), we can choose the $e_i$ to be in $\lambda_{\rho}(\mathfrak{p}_L)$. Choose elements $\doubletilde{$\gamma$}(e_i)$ in $K$ such that $\tilde{\gamma}(e_i)$ is the class of $\doubletilde{$\gamma$}(e_i)$ modulo $\mathcal{O}$. Define the $K$-linear map $\doubletilde{$\gamma$}:L \longrightarrow K$ by $\doubletilde{$\gamma$}(\sum a_i e_i)=\sum a_i \doubletilde{$\gamma$}(e_i)$ where $a_i \in K$. Thus we obtain the following commutative diagram
\[
\begin{tikzcd}
L \arrow{r}{\doubletilde{$\gamma$}} \arrow{rd}{\tilde{\gamma}} & K \arrow{d} \\
\lambda_{\rho}(\mathfrak{p}_L) \arrow{u} \arrow{r}{\gamma} & K/\mathcal{O}
\end{tikzcd}
\]
\noindent the right hand arrow being the canonical projection and the left hand arrow being the inclusion.
However, the $K$-linear form $\doubletilde{$\gamma$}$ is induced by some element $y \in L$ satisfying $\doubletilde{$\gamma$}(x)=\operatorname{T}_{L|K}(xy)$ for all $x \in \lambda_{\rho}(\mathfrak{p}_L)$. Therefore we have $\gamma(x) \equiv \doubletilde{$\gamma$}(x) = <x,y>_L \mod \mathcal{O}$.
\end{proof}

Now, we give the construction of the so-called Iwasawa map. As mentioned in the introduction, the map 
\begin{align*}
\mathcal{O}/\eta^n \mathcal{O}  & \longrightarrow W_{\rho}^n  \numberthis\\
a  & \longmapsto \rho_a(v_n)
\end{align*}
is an isomorphism of $\mathcal{O}$-modules. We denote by $\iota_1$ its inverse. We define the $\mathcal{O}$-linear map
\begin{displaymath}
\begin{tikzcd}
    \iota: W_{\rho}^n  \ar[r, "\iota_1"] & \mathcal{O}/\eta^n \mathcal{O}  \ar[r] & K/\mathcal{O} \\
     \rho_a(v_n)  \ar[r, mapsto]  & a  \ar[r, mapsto] & \frac{a}{\eta^n}
\end{tikzcd}
\end{displaymath}
Let
\begin{equation} \label{defLn}
    L^n=\{ \beta \in L^{\times};~ (\alpha,\beta)_{L,n}=0 ~ \forall \alpha \in W_L\}.  
\end{equation}
Any $\beta \in L^n$ defines an $\mathcal{O}$-linear map 

 $$h_{\beta}: \left\{ \begin{array}{cc}
        \mathfrak{p}_L/W_L   & \longrightarrow  K/\mathcal{O} \\
       \alpha   & \longmapsto   \iota((\alpha,\beta)_{L,n})
    \end{array}
    \right.$$
where the action of $\mathcal{O}$ on $\mathfrak{p}_L/W_L$ is given by (\ref{defactionrho}). The map $\beta \mapsto h_{\beta}$ gives a group homomorphism from $L^n$ to $\operatorname{Hom}_{\mathcal{O}}(\mathfrak{p}_L/W_L, K/\mathcal{O})$. 
The isomorphism of Lemma \ref{lemmadefrho} (ii) induces the following isomorphism of $\mathcal{O}$-modules
\begin{equation} \label{isomhom}
    \operatorname{Hom}_{\mathcal{O}} (\mathfrak{p}_L/W_L  ,K/\mathcal{O} ) \cong \operatorname{Hom}_{\mathcal{O}} ( \lambda_{\rho}(\mathfrak{p}_L) ,K/\mathcal{O} ) .
\end{equation}

Let $\beta \in L^n$ and let $g_{\beta}$ be the image of $h_{\beta}$ by the isomorphism \eqref{isomhom}. Then $g_{\beta}$ is defined by $g_{\beta}(\lambda_{\rho}(\alpha))=\iota((\alpha,\beta)_{L,n})$. However $g_{\beta}$ is an $\mathcal{O}$-linear map from  $\lambda_{\rho}(\mathfrak{p}_L)$ to $K/\mathcal{O}$. Thus, by Lemma \ref{surjmap}, there exists a unique $y \in L/\mathfrak{X}_L$ satisfying $g_{\beta}(\lambda_{\rho}(\alpha))=\operatorname{T}_{L|K}(\lambda_{\rho}(\alpha)y) \mod \mathcal{O}$ for all $\alpha \in \mathfrak{p}_L$. It is easy to see that $y \in \eta^{-n}\mathfrak{X}_L/\mathfrak{X}_L$. We set
\begin{equation} \label{defPsi}
    \psi_{L,n}(\beta) = \eta^n y \mod \eta^n \mathfrak{X}_L.
\end{equation}

\begin{prop} \label{WilesProp}
We have
\begin{equation} 
    (\alpha,\beta)_{L,n}=  T_{L|K}(\lambda_{\rho}(\alpha)\psi_{L,n}(\beta)) \cdot_{\rho} v_n \label{formulapsi}
\end{equation}
for all $\alpha \in \mathfrak{p}_{L}$ and $\beta \in L^n$. Furthermore, the map $\psi_{L,n}:L^n \longrightarrow \mathfrak{X}_L/\eta^n\mathfrak{X}_L$ is a group homomorphism.
\end{prop}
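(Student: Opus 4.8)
The plan is to prove the proposition by unwinding the chain of identifications through which $\psi_{L,n}$ was defined in \eqref{defPsi}, and then to check that the resulting formula does not depend on the choices made.

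First I would translate the defining property of $\psi_{L,n}(\beta)$ into a trace identity. Fix $\beta\in L^n$. By construction, the class $y:=\eta^{-n}\psi_{L,n}(\beta)\in\eta^{-n}\mathfrak{X}_L/\mathfrak{X}_L$ is the one attached by Lemma~\ref{surjmap} to the homomorphism $g_\beta$, so $g_\beta(\lambda_{\rho}(\alpha))=T_{L|K}(\lambda_{\rho}(\alpha)\,y)\bmod\mathcal{O}$ for every $\alpha\in\mathfrak{p}_L$, while by definition $g_\beta(\lambda_{\rho}(\alpha))=\iota\big((\alpha,\beta)_{L,n}\big)$. Combining these and clearing $\eta^n$ yields
\[
    \iota\big((\alpha,\beta)_{L,n}\big)\;=\;\tfrac{1}{\eta^n}\,T_{L|K}\big(\lambda_{\rho}(\alpha)\,\psi_{L,n}(\beta)\big)\bmod\mathcal{O},\qquad\alpha\in\mathfrak{p}_L.
\]
Because $\lambda_{\rho}(\alpha)\in\lambda_{\rho}(\mathfrak{p}_L)$ and $\psi_{L,n}(\beta)$ is represented by an element of $\mathfrak{X}_L$, the defining property \eqref{defXL} gives $a:=T_{L|K}(\lambda_{\rho}(\alpha)\psi_{L,n}(\beta))\in\mathcal{O}$, so the identity becomes $\iota\big((\alpha,\beta)_{L,n}\big)=\tfrac{a}{\eta^n}\bmod\mathcal{O}$. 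Now $\iota$ is injective, being the composite of the inverse of the isomorphism $\mathcal{O}/\eta^n\mathcal{O}\xrightarrow{\sim}W_{\rho}^n$, $a\mapsto\rho_a(v_n)$, with the inclusion $\mathcal{O}/\eta^n\mathcal{O}\hookrightarrow K/\mathcal{O}$, and its inverse sends $\tfrac{a}{\eta^n}\bmod\mathcal{O}$ to $\rho_a(v_n)=a\cdot_{\rho}v_n$. Since $(\alpha,\beta)_{L,n}\in W_{\rho}^n$, applying $\iota^{-1}$ gives exactly \eqref{formulapsi}.

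Next I would verify that the right-hand side of \eqref{formulapsi} is unambiguous and that $\psi_{L,n}$ is a homomorphism. Replacing $\psi_{L,n}(\beta)$ by another representative $\psi_{L,n}(\beta)+\eta^n z$, $z\in\mathfrak{X}_L$, changes $a$ by $\eta^n b$ with $b:=T_{L|K}(\lambda_{\rho}(\alpha)z)\in\mathcal{O}$, and $(\eta^n b)\cdot_{\rho}v_n=\rho_b(\rho_{\eta^n}(v_n))=0$ because $v_n$ generates $W_{\rho}^n=V_\rho^{nm_0}$ and is therefore killed by $\eta^n$; so \eqref{formulapsi} is independent of the choice. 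For the homomorphism property, $\psi_{L,n}$ is by construction the composite of the map $\beta\mapsto h_\beta$, which is a group homomorphism $L^n\to\operatorname{Hom}_{\mathcal{O}}(\mathfrak{p}_L/W_L,K/\mathcal{O})$ by bilinearity of $(~,~)_{L,n}$ and additivity of $\iota$; the isomorphism \eqref{isomhom}; the inverse of the isomorphism $L/\mathfrak{X}_L\xrightarrow{\sim}\operatorname{Hom}_{\mathcal{O}}(\lambda_{\rho}(\mathfrak{p}_L),K/\mathcal{O})$ of Lemma~\ref{surjmap}; and multiplication by $\eta^n$. Each step is a group homomorphism, hence so is $\psi_{L,n}$.

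I do not expect a serious obstacle: the statement is essentially a diagram chase. The points that need care are the bookkeeping of the successive identifications above, and two torsion arguments — that the class $y$ indeed lands in $\eta^{-n}\mathfrak{X}_L/\mathfrak{X}_L$ (so that $\psi_{L,n}(\beta)\in\mathfrak{X}_L/\eta^n\mathfrak{X}_L$ makes sense), and that the $\eta^n\mathfrak{X}_L$-ambiguity is annihilated by $\cdot_{\rho}v_n$. Both reduce to the single observation that $\iota$ takes values in the $\eta^n$-torsion subgroup $\tfrac{1}{\eta^n}\mathcal{O}/\mathcal{O}$ of $K/\mathcal{O}$, together with $\rho_{\eta^n}(v_n)=0$: the former forces $h_\beta$, hence $g_\beta$, to be killed by $\eta^n$, which via Lemma~\ref{surjmap} is exactly the condition $\eta^n y\in\mathfrak{X}_L$.
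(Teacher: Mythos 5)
Your proof is correct and is exactly the paper's argument: the paper's own proof is the one-line remark that the identity ``follows immediately from the construction,'' and what you have written is precisely that construction unwound, including the two torsion checks ($\eta^n y\in\mathfrak{X}_L$ and the annihilation of the $\eta^n\mathfrak{X}_L$-ambiguity by $\cdot_{\rho}v_n$) that make the statement well posed.
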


\begin{proof}
The Proposition follows immediately from the construction.
\end{proof}

Exactly as in \cite{kolyvagin}, our $\psi_{L,n}$ satisfies the properties $\varphi_1,~ \varphi_2,~ \varphi_3,~ \varphi_4,~ \varphi_5$ and $\varphi_6$ of loc. cit.

\section{More properties of the pairing $(~,~)_{L,n}$}
As above, we continue to fix a positive integer $n$, a finite extension $L$ of $E_{\rho}^n$ and a generator $v_n$ of $W_{\rho}^n$.
\begin{lemma} \label{majoration}
The map $(~.~)_{L,n}$ is continuous, and $(\alpha,\cdot)_{L,n}=0$ for all $\alpha \in \mathfrak{p}_L$ such that $\mu(\alpha)> nm_0 + \dfrac{1}{q-1}$. Furthermore, if we put $\alpha_m= \rho_{\eta^{m-n}}(\alpha)$ for $m \ge n$, then there exists a constant $c(\alpha)$ satisfying $\mu(\alpha_m) \ge mm_0- c(\alpha)$.
\end{lemma}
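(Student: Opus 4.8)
The plan is to prove the three assertions separately; the middle one — vanishing of $(\alpha,\cdot)_{L,n}$ for $\mu(\alpha)$ large — carries the real content and immediately yields continuity, while the valuation estimate is an independent Newton‑polygon computation. For the vanishing, assume $\mu(\alpha) > nm_0 + \frac{1}{q-1}$ (so in particular $\mu(\alpha) > \frac{1}{q-1}$), and set $\xi := e_\rho\bigl(\eta^{-n}\lambda_\rho(\alpha)\bigr)$, where $e_\rho$ is the exponential of $\rho$ introduced in the proof of Lemma~\ref{lemmadefrho}(iii). By Lemma~\ref{lemmadefrho}(i) we have $\lambda_\rho(\alpha)\in L$, and the estimate in the proof of Lemma~\ref{lemmadefrho}(iii) gives $\mu(\lambda_\rho(\alpha))=\mu(\alpha)$; hence $\eta^{-n}\lambda_\rho(\alpha)\in L$ with $\mu(\eta^{-n}\lambda_\rho(\alpha))=\mu(\alpha)-nm_0>\frac{1}{q-1}$, so $\eta^{-n}\lambda_\rho(\alpha)\in\mathfrak{p}_{L,1}$. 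Since $e_\rho$ maps $\mathfrak{p}_{L,1}$ into $\mathfrak{p}_{L,1}\subset L$, we get $\xi\in L$, and from the identities $e_\rho(ay)=\rho_a(e_\rho(y))$ and $e_\rho(\lambda_\rho(y))=y$ (valid for $y\in\mathfrak{p}_{L,1}$) we obtain $\rho_{\eta^n}(\xi)=e_\rho(\lambda_\rho(\alpha))=\alpha$. Thus $L(\xi)=L$, so by Lemma~\ref{existenceXi} (or by Proposition~\ref{PropoertiesPairing1}(ii)) we conclude $(\alpha,\beta)_{L,n}=\Phi_L(\beta)(\xi)-\xi=0$ for every $\beta\in L^\times$. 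In particular $(\cdot,\beta)_{L,n}$ vanishes on the open subgroup $U_0:=\{x\in\mathfrak{p}_L:\mu(x)>nm_0+\frac{1}{q-1}\}$, for every $\beta$.

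For continuity, since $W_\rho^n$ is finite it is enough to show $(~,~)_{L,n}$ is locally constant. Fix $(\alpha_0,\beta_0)$ and pick $\xi_0$ with $\rho_{\eta^n}(\xi_0)=\alpha_0$. By Proposition~\ref{PropoertiesPairing1}(ii) the homomorphism $(\alpha_0,\cdot)_{L,n}$ is trivial on the norm subgroup $\operatorname{N}_{L(\xi_0)|L}(L(\xi_0)^\times)$, which is open in $L^\times$ by local class field theory, while the previous paragraph shows $(\cdot,\beta)_{L,n}$ is trivial on $U_0$ for every $\beta$. By bilinearity, for $(\alpha,\beta)$ in the neighbourhood $(\alpha_0+U_0)\times\beta_0\operatorname{N}_{L(\xi_0)|L}(L(\xi_0)^\times)$ of $(\alpha_0,\beta_0)$ one has $(\alpha,\beta)_{L,n}=(\alpha_0,\beta_0)_{L,n}$, which proves the claim.

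For the valuation estimate, write $\rho_\eta(X)=\eta X+\sum_{i\ge1}b_iX^{q^i}$ with $b_i\in\mathcal{O}_H$ and $\mu(\eta)=m_0$. For $x\in\mathfrak{p}_L$ one has $\mu(\rho_\eta(x))\ge\min(m_0+\mu(x),\,q\mu(x))>\mu(x)$, and if $\mu(x)>\frac{m_0}{q-1}$ then the term $\eta x$ realises the strict minimum, so $\mu(\rho_\eta(x))=m_0+\mu(x)$. Since $\rho$ is a ring homomorphism, $\rho_{\eta^{m-n}}=\rho_\eta\circ\cdots\circ\rho_\eta$ ($m-n$ factors), so the valuations of the successive iterates of $\alpha$ strictly increase and, as long as they are $\le\frac{m_0}{q-1}$, at least multiply by $q$ at each step; hence they exceed $\frac{m_0}{q-1}$ after at most some number $j_1=j_1(\mu(\alpha))$ of steps, after which each step adds exactly $m_0$. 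Therefore $\mu(\alpha_m)\ge(m-n)m_0-c'(\alpha)$ for a constant $c'(\alpha)$ depending only on $\mu(\alpha)$ (and on $q$, $m_0$), that is $\mu(\alpha_m)\ge mm_0-c(\alpha)$ with $c(\alpha)=nm_0+c'(\alpha)$; enlarging $c(\alpha)$ if necessary handles the finitely many values $m<n+j_1$.

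The main obstacle is the vanishing step: one must exhibit a root $\xi$ of $\rho_{\eta^n}(X)=\alpha$ lying in $L$ itself, and this rests entirely on the convergence of $\lambda_\rho$ and $e_\rho$ on the disk $\{\mu>\frac{1}{q-1}\}$, i.e.\ on Lemma~\ref{lemmadefrho}(iii) — precisely the point where the power‑series (rather than polynomial) nature of $\rho$ demands the extra care invested in that lemma. Once that is in hand, the continuity and valuation assertions are routine.
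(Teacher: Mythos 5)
Your proof is correct, but it takes a genuinely different route from the paper's on the central point. For the vanishing of $(\alpha,\cdot)_{L,n}$, you construct an actual root $\xi=e_\rho(\eta^{-n}\lambda_\rho(\alpha))$ of $\rho_{\eta^n}(X)=\alpha$ lying in $L$ itself, so that $L(\xi)=L$ and the pairing dies for trivial Galois-theoretic reasons; this is clean and shows more (the $\eta^n$-division points of such $\alpha$ are already rational over $L$), but it leans on the full analytic content of Lemma~\ref{lemmadefrho}(iii), namely that $e_\rho$ and $\lambda_\rho$ are mutually inverse bijections of the disk $\{\mu>\frac{1}{q-1}\}$. The paper instead picks a root $\xi$ of \emph{maximal} valuation, uses the factorization $\rho_{\eta^n}=V_n\circ P_{nm_0}$ with $P_{nm_0}(X)=\prod_{w\in W_\rho^n}(X-w)$ to get the exact identity $\mu(\alpha)=\sum_{w}\min\{\mu(\xi),\mu(w)\}$, deduces $\mu(\xi)>\frac{1}{q-1}$, and concludes $(\alpha,\beta)_{L,n}=0$ because $\mu(\Phi_L(\beta)(\xi)-\xi)\ge\mu(\xi)$ exceeds the valuation of every nonzero element of $W_\rho^n$; no properties of $e_\rho$ are needed. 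Likewise, your third part is an independent iteration estimate on $\rho_\eta$ (valuations at least multiply by $q$ until they pass $\frac{m_0}{q-1}$, then increase by exactly $m_0$), whereas the paper extracts $c(\alpha)=j-\frac{1}{q-1}$ from the \emph{same} torsion-point computation, which has the added benefit of yielding the uniform bound on $c(\alpha)$ recorded in Remark~\ref{remarkC} (depending only on $n$ and the ramification of $L|E_\rho^n$) that is actually invoked later in Lemma~\ref{equality}; your $c(\alpha)$ exists but comes with no such uniform control, which is fine for the lemma as stated but would need supplementing for the downstream use. Your continuity argument via local constancy on $(\alpha_0+U_0)\times\beta_0\operatorname{N}_{L(\xi_0)|L}(L(\xi_0)^\times)$ is a correctly fleshed-out version of the paper's one-line appeal to the continuity of $\Phi_L$.
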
 

\begin{proof}
We follow \cite[Lemma 15]{Ignazio}. First we show that $(\alpha,\cdot)_{L,n}=0$ for all $\alpha \in \mathfrak{p}_L$ such that $\mu(\alpha)> nm_0 + \dfrac{1}{q-1}$. To do so, it is sufficient to chose $\xi$ a root of $\rho_{\eta ^n}(X)=\alpha$ such that $\mu(\xi)$ is large enough. Indeed, since $W_{\rho}^n$ is discrete and $\mu((\alpha,\beta)_{L,n}) \ge \mu(\xi)=\mu(\Phi_L(\beta)(\xi))$ for all $\beta \in L^{\times}$ , if $\mu(\xi)$ is large enough, then  $(\alpha,\beta)_n=0$. 

For the valuation computation, we set $\mu_j:=\displaystyle\frac{1}{q^{j-1}(q-1)}$ for $j \ge 1$ and $\mu_0:=\infty$. Choose $\xi$ a root of $\rho_{\eta ^n}(X)=\alpha$ of maximal valuation. This is possible because the equation $\rho_{\eta ^n}(X)=\alpha$ has a finite set of solutions: $\xi + W_{\rho}^n$. 
We have
$$ \alpha= \rho_{\eta^n}(\xi)  = \rho_{u^n}(P_{nm_0}(\xi)),$$
where $u$ is the unit such that $\eta= u \pi^{m_0}$ and $P_{nm_0}(X)=\Pi_{w \in W_{\rho}^n} (X-w)$ is the polynomial defined in \eqref{defPn}. Therefore, we get
$$ \mu(\alpha)= \mu(P_{nm_0}(\xi))=\sum_{w \in W_{\rho}^n} \mu(\xi -w)$$
because $\rho_{u^n}(X) \equiv u^n X \mod \operatorname{deg~2}$.
Let $w \in W_{\rho}^n$. If $\mu(\xi) \neq \mu(w)$, then $\mu(\xi-w)=\min\{\mu(\xi),\mu(w)\}$. If $\mu(\xi) = \mu(w)$, then 
$$ \mu(\xi) = \min\{\mu(\xi),\mu(w)\} \le \mu(\xi-w) \le \mu(\xi),$$
the last inequality being a consequence of the maximality hypothesis on $\mu(\xi)$. Hence we have $\mu(\xi-w)=\min\{\mu(\xi),\mu(w)\}$ for all $w \in W_{\rho}^n$ and
\begin{equation} \label{eqminval}
    \mu(\alpha) = \sum_{w \in W_{\rho}^n} \min\{\mu(\xi),\mu(w)\} .
\end{equation}
Let $j \ge 0$ be such that $v_{j+1} < \mu(\xi) \le v_j$. If $0 \le j \le nm_0$, the equality \eqref{eqminval} yields
$$ \mu(\alpha) = \sum_{w \in V_{\rho}^j} \mu(\xi) + \sum_{w \in W_{\rho}^n \setminus V_{\rho}^j} \mu(w) = q^j \mu(\xi) +nm_0-j  $$
so that $nm_0-j+\frac{1}{q-1} < \mu(\alpha) \le nm_0 -j + 1 +\frac{1}{q-1}.$  Now if $j > nm_0$, by \eqref{eqminval} we get
$ \mu(\alpha) = q^{nm_0} \mu(\xi)$ so that 
$$ nm_0-j+\frac{1}{q-1} \le 0 < \frac{1}{q^{j-nm_0}(q-1)} < \mu(\alpha) \le  \frac{1}{q^{j-nm_0-1}(q-1)}.$$
If we suppose $\mu(\alpha)> nm_0 + \dfrac{1}{q-1}$, we get $j=0$, which implies that $\mu((\alpha,\beta)_{L,n}) \ge \mu(\xi) > \frac{1}{q-1}$ for all $\beta \in L^{\times}$. It follows that $(\alpha,\beta)_{L,n}=0$ for all $\beta \in L^{\times}$. The fact that the map $(~.~)_{L,n}$ is continuous follows immediately since the reciprocity map $\Phi_L$ is continuous. Finally, we set $c(\alpha)= j - \frac{1}{q-1}$.
\end{proof}
\begin{remark} \label{remarkC}
Let $e$ be the ramification index of $L|E_{\rho}^n$, then the constant $c(\alpha)$ from the last Lemma is bounded as follows
$$ \dfrac{-1}{q-1} \le c(\alpha) \le 2nm_0 + \log_q(e) -\dfrac{1}{q-1}  .$$
\end{remark}
\begin{prop} \label{existencer}
There exists a unique power series $r=r_n \in \mathcal{O}_H\{\{\tau\}\}$ such that $$ \prod_{\omega \in W_{\rho}^n}(X-\omega)=r \circ \rho_{\eta^n}(X) .$$ 
Furthermore, the power series $r$ is invertible in $\mathcal{O}_H\{\{\tau\}\}$ and satisfies

$$(x,r(x))_{L,n}=0, ~~ \forall x \in \mathfrak{p}_L  .$$

\end{prop}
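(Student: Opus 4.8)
The plan is to construct $r$ directly from the factorization of $\rho_{\pi^{nm_0}}$ recalled in the proof of Lemma~\ref{existenceXi}, and then to recognize the polynomial $\prod_{\omega}(X-\omega)$, evaluated at a $\rho_{\eta^n}$-preimage of $x$, as a norm from the extension $L(\xi)$.

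First I would recall the identity $\rho_{\eta^n}=V_n\circ P_{nm_0}$ established there, where $P_{nm_0}$ is the polynomial of \eqref{defPn}, $V_n=\rho_{u^n}U_1U_{nm_0}$ is invertible in $\mathcal{O}_H\{\{\tau\}\}$, and, as used in Lemma~\ref{majoration}, $P_{nm_0}(X)=\prod_{\omega\in W_{\rho}^n}(X-\omega)$. Then I would simply set $r:=V_n^{-1}\in\mathcal{O}_H\{\{\tau\}\}$: it is invertible by construction and satisfies $r\circ\rho_{\eta^n}=V_n^{-1}\circ V_n\circ P_{nm_0}=\prod_{\omega}(X-\omega)$. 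Uniqueness follows because $\mathcal{O}_H$ is a domain, hence so is $\mathcal{O}_H\{\{\tau\}\}$ (look at the lowest-order $\tau$-coefficient of a product), and $\rho_{\eta^n}\neq 0$, so $r_1\circ\rho_{\eta^n}=r_2\circ\rho_{\eta^n}$ forces $r_1=r_2$.

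For the vanishing of the pairing I would fix $x\in\mathfrak{p}_L$ with $x\neq 0$ (the case $x=0$ being vacuous, since $r(0)=0$), and a root $\xi\in\mathfrak{p}_{\Omega}$ of $\rho_{\eta^n}(Y)=x$ as in Lemma~\ref{existenceXi}. Evaluating $\prod_{\omega}(Y-\omega)=r\circ\rho_{\eta^n}(Y)$ at $Y=\xi$ gives $r(x)=\prod_{\omega\in W_{\rho}^n}(\xi-\omega)$, which lies in $L$ (the coefficients of $r$ are in $\mathcal{O}_H\subset L$ and $L$ is complete) and is nonzero, since $x\neq 0$ forces $\xi\notin W_{\rho}^n$; thus $r(x)\in L^\times$. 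By Lemma~\ref{existenceXi}, $L(\xi)|L$ is abelian and $\sigma\mapsto\sigma(\xi)-\xi$ identifies $G:=\operatorname{Gal}(L(\xi)|L)$ with a subgroup $H\subset W_{\rho}^n$ with $|H|=[L(\xi):L]$, so that $\{\sigma(\xi):\sigma\in G\}=\{\xi+h:h\in H\}$. Choosing a transversal $T$ of $W_{\rho}^n/H$ and using $W_{\rho}^n\subset L$, I would then compute
\begin{equation*}
\operatorname{N}_{L(\xi)|L}\Big(\prod_{t\in T}(\xi-t)\Big)=\prod_{t\in T}\prod_{h\in H}\big((\xi+h)-t\big)=\prod_{\omega\in W_{\rho}^n}(\xi+\omega)=\prod_{\omega\in W_{\rho}^n}(\xi-\omega)=r(x),
\end{equation*}
the first equality by definition of the norm together with $\{\sigma(\xi):\sigma\in G\}=\{\xi+h:h\in H\}$, the second because $(h,t)\mapsto h-t$ is a bijection $H\times T\to W_{\rho}^n$, the third because $\omega\mapsto-\omega$ permutes $W_{\rho}^n$, and the last by the identity for $r(x)$ found above. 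Each $\xi-t$ being a nonzero element of $L(\xi)$, this exhibits $r(x)$ as a norm from $L(\xi)$; since $\rho_{\eta^n}(\xi)=x$, Proposition~\ref{PropoertiesPairing1}(ii) then yields $(x,r(x))_{L,n}=0$.

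The step I expect to be most delicate is the displayed computation: one must keep straight the three products (over $G$, over $H$, and over the transversal $T$) and verify that $(h,t)\mapsto h-t$ really is a bijection $H\times T\to W_{\rho}^n$ — for this one uses that $W_{\rho}^n$ is a finite abelian group, so negation and translations permute its cosets modulo $H$, together with the cardinality count $|H\times T|=|H|\,[W_{\rho}^n:H]=|W_{\rho}^n|$. Everything else (existence, invertibility and uniqueness of $r$) is a formal consequence of the factorization from \cite{Oukhaba} and the integrality of $\mathcal{O}_H\{\{\tau\}\}$.
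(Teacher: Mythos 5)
Your proposal is correct and follows essentially the same route as the paper: take $r=V_n^{-1}$ from the Oukhaba factorization $\rho_{\eta^n}=V_n\circ P_{nm_0}$, identify $r(x)$ with $\prod_{\omega\in W_{\rho}^n}(\xi-\omega)$, recognize this as a norm from $L(\xi)$, and conclude by Proposition \ref{PropoertiesPairing1}(ii). The only differences are cosmetic: you spell out the uniqueness of $r$ (via integrality of $\mathcal{O}_H\{\{\tau\}\}$) and package the norm computation as the norm of the single element $\prod_{t\in T}(\xi-t)$, whereas the paper writes the same product as $\prod_i\operatorname{N}_{L(\xi)|L}(\xi_i)$ over the distinct roots $\xi_i$.
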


\begin{proof}
As in the proof of Lemma \ref{existenceXi}, we can write 
$$ \rho_{\eta^n} (X) = \rho_{u^n} \circ U_1 \circ U_{nm_0} \circ P_{nm_0}(X) .$$
Thus for $r= (\rho_{u^n} \circ U_1 \circ U_{nm_0})^{-1}$ we get $P_{nm_0}(X)= \prod_{\omega \in W_{\rho}^n}(X-\omega)= r \circ \rho_{\eta^n}(X)$.
It remains to show that $(x,r(x))_{L,n}=0$ for all $x \in \mathfrak{p}_L$. Take $x \in \mathfrak{p}_L$ and $\xi$ such that $\rho_{\eta^n}(\xi)=x$. Then,
$$ r(x)= (r \circ \rho_{\eta^n})(\xi) = \prod_{\omega \in W_{\rho}^n}(\xi-\omega) = \prod_{i} \operatorname{N}_{L(\xi)|L}(\xi_i)$$
where $\xi_i$ are the pairwise distinct roots of $\rho_{\eta^n}(X)=x$. It follows that $(x,r(x))_{L,n}=0$ by Proposition \ref{PropoertiesPairing1} (ii).
\end{proof}

\begin{lemma} \label{condrhoprime}
Let $\rho'$ be  defined by 
\begin{equation} \label{defrhoprime}
    \rho'_a= r \circ \rho_a \circ r^{-1}
\end{equation}
for all $a \in \mathcal{O}$. Then $\rho'$ is a formal Drinfeld module having a stable reduction of height 1, and we have $(x,x)_{\rho',L,n}=0$ for all $x \in \mathfrak{p}_L$.
\end{lemma}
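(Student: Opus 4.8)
The plan is to split the statement into two parts: (a) $\rho'$ is a formal Drinfeld module over $\mathcal{O}_H$ with stable reduction of height $1$, and (b) $(x,x)_{\rho',L,n}=0$ for all $x\in\mathfrak{p}_L$. For (a), the starting observation is that $r$ is invertible in $\mathcal{O}_H\{\{\tau\}\}$ by Proposition \ref{existencer}, so conjugation $f\mapsto r\circ f\circ r^{-1}$ is a ring automorphism of $\mathcal{O}_H\{\{\tau\}\}$; hence $\rho'$, being the composition of $\rho$ with this automorphism, is a ring homomorphism. To check axiom (i) I would use that $D$ is a ring homomorphism (the constant term of a product is the product of the constant terms, since $\tau$ has zero constant term), together with $D(r)D(r^{-1})=D(1)=1$ and $D(\rho_a)=\gamma(a)\in\mathcal{O}_H$: since $\mathcal{O}_H$ is commutative this gives $D(\rho'_a)=D(r)\gamma(a)D(r^{-1})=\gamma(a)$. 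Reducing coefficients modulo $\mathfrak{p}_H$ is again a ring homomorphism $\mathcal{O}_H\{\{\tau\}\}\to k\{\{\tau\}\}$ (where $k=\mathcal{O}_H/\mathfrak{p}_H$), sending $r$ to an invertible series $\bar r$ (its constant term is a unit), so $\bar{\rho'}_a=\bar r\circ\bar\rho_a\circ\bar r^{-1}$, and the same computation using that $\bar\rho$ satisfies axiom (i) gives $D(\bar{\rho'}_a)=\bar\gamma(a)$.

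The key point of (a) is the height computation. Since $\operatorname{ord}_{\tau}$ is additive on products in the twisted power series ring over the field $k$, and $\operatorname{ord}_{\tau}(\bar r)=\operatorname{ord}_{\tau}(\bar r^{-1})=0$, I get
\[
\operatorname{ht}(\bar{\rho'})=\operatorname{ord}_{\tau}(\bar{\rho'}_\pi)=\operatorname{ord}_{\tau}(\bar r)+\operatorname{ord}_{\tau}(\bar\rho_\pi)+\operatorname{ord}_{\tau}(\bar r^{-1})=\operatorname{ht}(\bar\rho)=1,
\]
and in particular $\bar{\rho'}_\pi\ne 0$ and $\rho'_\pi\ne 0$, which is axiom (iii) for both $\bar{\rho'}$ and $\rho'$. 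For axiom (ii) I would argue by contradiction: if $\bar{\rho'}(\mathcal{O})$ were contained in $k$, then $\bar{\rho'}_\pi$ would equal its constant term $D(\bar{\rho'}_\pi)=\bar\pi=0$ (recall $\pi\in\mathfrak{p}_H$ because $H|K$ is unramified), contradicting $\operatorname{ht}(\bar{\rho'})=1$; and if $\rho'(\mathcal{O})\subset\mathcal{O}_H$ then reducing modulo $\mathfrak{p}_H$ would force $\bar{\rho'}(\mathcal{O})\subset k$, which we just excluded. Thus $\rho'$ and $\bar{\rho'}$ are both formal Drinfeld modules, i.e. $\rho'$ has stable reduction, and $\operatorname{ht}(\bar{\rho'})=1$.

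For (b) I would simply invoke Proposition \ref{PropoertiesPairing1}(vi) with $t=r^{-1}$, so that $\rho'_a=r\circ\rho_a\circ r^{-1}=t^{-1}\circ\rho_a\circ t$. This is legitimate now that (a) is established, and it also makes $(~,~)_{\rho',L,n}$ well defined because $W_{\rho'}^n=r(W_{\rho}^n)\subset L$, $r$ having coefficients in $\mathcal{O}_H\subset\mathcal{O}_L$. Part (vi) then yields, for every $x\in\mathfrak{p}_L$,
\[
(x,x)_{\rho',L,n}=r\bigl((r^{-1}(x),x)_{\rho,L,n}\bigr).
\]
Set $y=r^{-1}(x)$; since $r^{-1}$ has coefficients in $\mathcal{O}_H$ one has $\mu(r^{-1}(x))=\mu(x)>0$, so $y\in\mathfrak{p}_L$ and $r(y)=x$. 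Applying the last assertion of Proposition \ref{existencer} to $y$ gives $(r^{-1}(x),x)_{\rho,L,n}=(y,r(y))_{\rho,L,n}=0$, hence $(x,x)_{\rho',L,n}=r(0)=0$. I expect the only real friction to be in part (a): unlike the classical Drinfeld module situation, $r$ is a genuine power series, so degree arguments are unavailable and axiom (ii) has to be verified indirectly through the reduction $\bar{\rho'}$ and the additivity of $\operatorname{ord}_{\tau}$; one must also be attentive to the non-commutativity of $\mathcal{O}_H\{\{\tau\}\}$ when manipulating $D$. Part (b), by contrast, is a formal consequence of Propositions \ref{PropoertiesPairing1}(vi) and \ref{existencer}.
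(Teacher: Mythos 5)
Your proof is correct and follows essentially the same route as the paper: part (b) is exactly the paper's argument (Proposition \ref{PropoertiesPairing1}(vi) with $t=r^{-1}$ combined with the last assertion of Proposition \ref{existencer}), and part (a), which the paper simply asserts as following from the corresponding properties of $\rho$, is verified by you in detail via the additivity of $\operatorname{ord}_{\tau}$ over the residue field and compatibility of conjugation with reduction mod $\mathfrak{p}_H$. No gaps.
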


\begin{proof}
That $\rho'$ is a formal Drinfeld module having a stable reduction of height 1 follows from the fact that $\rho$ itself is supposed to be a formal Drinfeld module having a stable reduction of height 1. It follows from Proposition \ref{PropoertiesPairing1} (vi) that $(x,x)_{\rho',L,n}= r((r^{-1}(x),x)_{\rho,L,n})=r(0)=0.$
\end{proof}

\begin{lemma} \label{lemmecb}
If $\rho$ is such that $(x,x)_{\rho,L,n}=0$ for all $x \in \mathfrak{p}_L$, then  we have 
$$ (c,1-b)_{L,n} = (\dfrac{bc}{1-b},b^{-1})_{L,n}$$
for all $b \in \mathfrak{p}_L \setminus\{0\}$ and $c \in \mathfrak{p}_L$.
\end{lemma}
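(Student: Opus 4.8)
The plan is to prove this identity by a short purely formal computation inside the abelian group $W_{\rho}^n$, using nothing beyond the bilinearity of $(~,~)_{L,n}$ recorded in Proposition~\ref{PropoertiesPairing1}(i) together with the standing hypothesis $(x,x)_{L,n}=0$ for $x\in\mathfrak{p}_L$. The only real idea is to choose the right substitution; once it is in place the argument is mechanical.

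First I would set aside the trivial case $c=0$, in which both sides vanish by additivity in the first variable, and assume $c\ne 0$. Then I introduce $d:=\frac{bc}{1-b}$. Because $b\in\mathfrak{p}_L$, the element $1-b$ is a unit of $\mathcal{O}_L$; hence $d\in\mathfrak{p}_L$ is nonzero, and $c+d=\frac{c(1-b)+bc}{1-b}=\frac{c}{1-b}$ is also a nonzero element of $\mathfrak{p}_L$. The point of this substitution is the two elementary field identities
$$1-b=\frac{c}{c+d},\qquad b^{-1}=\frac{c+d}{d},$$
which exhibit the second arguments of the two symbols in the statement as quotients with common denominator $c+d$.

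With these in hand the computation reads
\begin{align*}
(c,1-b)_{L,n}-\Bigl(\tfrac{bc}{1-b},b^{-1}\Bigr)_{L,n}
&=\Bigl(c,\tfrac{c}{c+d}\Bigr)_{L,n}-\Bigl(d,\tfrac{c+d}{d}\Bigr)_{L,n}\\
&=\bigl[(c,c)_{L,n}-(c,c+d)_{L,n}\bigr]-\bigl[(d,c+d)_{L,n}-(d,d)_{L,n}\bigr]\\
&=-(c,c+d)_{L,n}-(d,c+d)_{L,n}\\
&=-(c+d,c+d)_{L,n}=0,
\end{align*}
where the second equality uses that the pairing turns products in the second variable into sums, the third uses $(c,c)_{L,n}=(d,d)_{L,n}=0$, the fourth uses additivity in the first variable, and the last uses the hypothesis once more with $x=c+d\in\mathfrak{p}_L\setminus\{0\}$. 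This is the claimed equality. I do not expect a genuine obstacle: the only thing requiring care is the bookkeeping that guarantees the bilinearity and the hypothesis $(x,x)_{L,n}=0$ may legitimately be applied at each step — namely that $d$ and $c+d$ are nonzero elements of $\mathfrak{p}_L$, which is exactly where the assumptions $b\ne 0$ and $b\in\mathfrak{p}_L$ enter, and that the degenerate case $c=0$ is disposed of separately as above.
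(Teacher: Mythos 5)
Your proof is correct, and it takes a genuinely different — and more elementary — route than the paper. The paper (following Bars--Longhi) applies the hypothesis to $x=cb^j(1-b)$ and iterates, producing the telescoping identity $(c,1-b)_{L,n}=\sum_{j}(cb^j,cb^{j-1})_{L,n}$; this is an infinite sum whose convergence must be justified by Lemma~\ref{majoration} (the pairing is continuous and kills first arguments of large valuation), after which each term is converted via $(cb^j,cb^j)_{L,n}=0$ and the geometric series $\sum_j cb^j=\frac{cb}{1-b}$ is resummed inside the first slot. Your substitution $d=\frac{bc}{1-b}$, with the identities $1-b=\frac{c}{c+d}$ and $b^{-1}=\frac{c+d}{d}$, collapses this entire limiting process into a single finite computation using only bilinearity and three instances of the diagonal vanishing (at $c$, $d$, and $c+d$). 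What you gain is that no topological input is needed: your argument works for any pairing $\mathfrak{p}_L\times L^{\times}\to W_{\rho}^n$ that is additive in the first slot, multiplicative-to-additive in the second, and vanishes on the diagonal, whereas the paper's proof leans on Lemma~\ref{majoration}. What the paper's version makes visible is the geometric-series mechanism that also underlies the later computations with $\delta_n$. Your bookkeeping is also in order: you correctly dispose of $c=0$ separately and verify that $d$ and $c+d$ are nonzero elements of $\mathfrak{p}_L$ (so that they are legitimate inputs in both slots), which is exactly where $b\neq 0$ and $b\in\mathfrak{p}_L$ are used.
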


\begin{proof}
We copy the proof of \cite[Lemma~18]{Ignazio}. To prove this result we use the property $(x,x)_{L,n}=0$ for $x=c(1-b)$. By bilinearity we get
$$ (c,1-b)_{L,n} = (cb,c)_{L,n} + (cb, 1-b)_{L,n} $$
and by induction
$$ (c,1-b)_{L,n}= \sum_{j \ge 0} (cb^j, cb^{j-1})_{L,n}. $$ 
This sum converges since only a finite number of terms is non zero by Lemma \ref{majoration}. However we have 
$ 0= (cb^j,cb^j)_{L,n} =(cb^j,cb^{j-1})_{L,n} + (cb^j,b)_{L,n}  .$
Therefore, 
$$ (c,1-b)_{L,n}= \sum_{j \ge 0} (cb^j, b^{-1})_{L,n} = (c \sum_{j \ge 0} b^j, b^{-1})_{L,n}  = (\frac{cb}{1-b}, b^{-1})_{L,n} . $$ 
\end{proof}
For a finite extension $F'|F$ of local fields, let $\mathfrak{m}_{F'|F}$ be the fractional ideal of $\mathcal{O}_{F'}$ defined by 
$$ \mathfrak{m}_{F'|F}=\{x \in F';~ \operatorname{T}_{F'|F}(x\mathcal{O}_{F'})\subset\mathcal{O}_F\} \supset \mathcal{O}_{F'}$$
\noindent As usual, the different $\mathcal{D}_{F'|F}$ of $F'|F$ is the inverse ideal of $\mathfrak{m}_{F'|F}$
$$ \mathcal{D}_{F'|F} := \mathfrak{m}_{F'|F}^{-1}.$$
If $F'|F$ is unramified, then $\mathcal{D}_{F'|F}= \mathcal{O}_{F'}$, and if $F'|F$ is totally ramified, then $\mathcal{D}_{F'|F}=h'(w)\mathcal{O}_{F'}$, where $w$ is a prime element of $F'$ and $h(X)$ is the minimal polynomial of $w$ over $F$. Moreover, if $F''|F$ is a finite extension of local fields such that $F \subset F' \subset F''$, we have 
$$ \mathcal{D}_{F''|F}=\mathcal{D}_{F''|F'} \mathcal{D}_{F'|F}.$$
 For more details, the reader may check \cite[\S 2.4]{Iwasawa}.

\begin{lemma} \label{generatordifferent}
Let $\mathcal{D}_n$ be the different of the extension $E_{\rho}^n|K$, then $\mathcal{D}_n$  is generated by an element of valuation 
 $nm_0 - \dfrac{1}{q-1} .$
\end{lemma}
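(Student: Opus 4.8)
The plan is to compute $\mathcal D_n$ directly as the ideal generated by the derivative of a minimal polynomial, and then to evaluate that derivative's valuation by peeling off the torsion of lower level. Writing $N=nm_0$ for brevity: since $H|K$ is unramified we have $\mathcal D_{H|K}=\mathcal O_H$, so by multiplicativity of the different in the tower $K\subset H\subset E_\rho^n$ we get $\mathcal D_n=\mathcal D_{E_\rho^n|H}\,\mathcal D_{H|K}=\mathcal D_{E_\rho^n|H}$. Recall from the Introduction that $E_\rho^n=H_\rho^{N}=H(v_n)$ with $v_n\in V_\rho^{N}\setminus V_\rho^{N-1}$, that $E_\rho^n|H$ is totally ramified of degree $q^{N-1}(q-1)$, and that $v_n$ is a prime element of $E_\rho^n$. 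Its minimal polynomial over $H$ is $h(X):=\prod_{\omega\in V_\rho^{N}\setminus V_\rho^{N-1}}(X-\omega)$: this is the monic separable Eisenstein polynomial of $\mathcal O_H[X]$ of degree $q^{N-1}(q-1)=[E_\rho^n:H]$ having $v_n$ as a root, hence it is the minimal polynomial. Therefore $\mathcal D_n=h'(v_n)\,\mathcal O_n$, and the statement reduces to proving $\mu(h'(v_n))=N-\tfrac1{q-1}$.

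For the valuation I would compare $h$ with the polynomial $P_N(X)=\prod_{\omega\in W_\rho^n}(X-\omega)$ of \eqref{defPn}. Grouping roots by level gives $P_N(X)=h(X)\,g(X)$ with $g(X):=\prod_{\omega\in V_\rho^{N-1}}(X-\omega)$, and since $h(v_n)=0$ the product rule yields $P_N'(v_n)=h'(v_n)\,g(v_n)$, i.e. $\mu(h'(v_n))=\mu(P_N'(v_n))-\mu(g(v_n))$. For the first term, recall $P_N=Q_N\circ\cdots\circ Q_1$ with $Q_i(X)=X^q+\pi_i X$; thus $P_N$ is an $\mathbb F_q$-linear polynomial, so its formal derivative is the constant equal to its coefficient of $X$, which is $\pi_1\cdots\pi_N$, and $\mu(P_N'(v_n))=\mu(\pi_1\cdots\pi_N)=N$ because each $\pi_i$ is a prime of the unramified extension $H$. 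For the second term, every nonzero $\omega\in V_\rho^{N-1}$ lies in some $V_\rho^{j}\setminus V_\rho^{j-1}$ with $1\le j\le N-1$ and so has $\mu(\omega)=\tfrac1{q^{j-1}(q-1)}\ge\tfrac1{q^{N-2}(q-1)}>\tfrac1{q^{N-1}(q-1)}=\mu(v_n)$; by the ultrametric inequality $\mu(v_n-\omega)=\mu(v_n)$ for every $\omega\in V_\rho^{N-1}$, and since $\#V_\rho^{N-1}=q^{N-1}$ this gives $\mu(g(v_n))=q^{N-1}\cdot\tfrac1{q^{N-1}(q-1)}=\tfrac1{q-1}$. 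Combining, $\mu(h'(v_n))=N-\tfrac1{q-1}=nm_0-\tfrac1{q-1}$, as claimed.

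I do not anticipate a genuine obstacle; the only points that require a little care are the identification of $h$ with the minimal polynomial of $v_n$ (immediate once one knows $[E_\rho^n:H]=q^{N-1}(q-1)$ and that $h$ is monic of that degree with $v_n$ as a root) and the strict inequality $\mu(\omega)>\mu(v_n)$ for nonzero $\omega$ of strictly lower level, which is what makes the step $\mu(v_n-\omega)=\min\{\mu(v_n),\mu(\omega)\}$ collapse to $\mu(v_n)$. The degenerate case $N=1$, where $g(X)=X$ and $h(X)=X^{q-1}+\pi_1$ so that $\mu(h'(v_n))=(q-2)\mu(v_n)=1-\tfrac1{q-1}$, is consistent, and one checks that $nm_0-\tfrac1{q-1}$ indeed lies in the value group $\tfrac1{q^{N-1}(q-1)}\mathbb Z$ of $E_\rho^n$.
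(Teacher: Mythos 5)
Your proof is correct and follows essentially the same route as the paper: reduce to $\mathcal{D}_{E_\rho^n|H}=(h'(v_n))$ via the unramified step and total ramification, factor $P_{nm_0}=h\cdot P_{nm_0-1}$, differentiate at $v_n$, and compute $\mu(P_{nm_0}'(v_n))=nm_0$ from the $\mathbb{F}_q$-linearity of $P_{nm_0}$ and $\mu(P_{nm_0-1}(v_n))=\tfrac{1}{q-1}$ from the ultrametric inequality. The only additions are the explicit sanity checks at the end, which are harmless.
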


\begin{proof}
First, note that we have $\mathcal{D}_n=\mathcal{D}_{E_{\rho}^n|H} \mathcal{D}_{H|K}$ . Since $H|K$ is an unramified extension, then $\mathcal{D}_{H|K}= \mathcal{O}_H$. Since $E_{\rho}^n|H$ is totally ramified and $v_n$ is a prime element of $E_{\rho}^n$, then $\mathcal{D}_{E_{\rho}^n|H}$ is the ideal of $\mathcal{O}_n$ generated by $h_n'(v_n)$, where $h_n$ is the minimal polynomial of $v_n$ over $H$. By \cite[Section 2]{Oukhaba}, $h_n(X)= \Pi_w (X-w)$ where $w$ varies in $V_{\rho}^{nm_0} \setminus V_{\rho}^{nm_0-1}$. We can also write 
\begin{equation} \label{phproduct}
    h_n(X)P_{nm_0-1}(X)=P_{nm_0}(X)
\end{equation}
where $P_{l}= \Pi_{w \in V_{\rho}^l} (X-w)$ for any positive integer $l$. Differentiating \eqref{phproduct} and evaluating at $v_n$ we get $h_n'(v_n)P_{nm_0-1}(v_n)=P'_{nm_0}(v_n)$. Since $\mu(v_n)=\frac{1}{q^{nm_0-1}(q-1)} < \mu(w)$ for all $w \in V_{\rho}^{nm_0-1}$, then $$\mu(P_{nm_0-1}(v_n))= \sum_{w\in V_{\rho}^{nm_0-1}}(\mu(v_n))=\frac{1}{q-1}.$$
Moreover, we have  $P_{nm_0} \in \mathcal{O}_H\{\tau\}$ as in \eqref{defPn} so that $\mu(P_{nm_0}(v_n))=nm_0$. This concludes the proof.
\end{proof}

\begin{lemma} \label{different}
Let $x \in E_{\rho}^n$ and denote by $\operatorname{T}_{n}$ the trace of the extension $E_{\rho}^n|K$. Then,
$$ \mu(\operatorname{T}_{n}(x)) \ge \lfloor\mu(x) + n m_0 - \dfrac{1}{q-1} \rfloor, $$
where $\lfloor a \rfloor$ the integral part of $a \in \mathbb{R}$. Furthermore, for $m \le n$, we have
$$ \mu(\operatorname{T}_{n,m}(x)) > \mu(x) +  (n-m)m_0 - \mu(v_{m}),$$
where $\operatorname{T}_{n,m}$ is the trace of the extension $E_{\rho}^n|E_{\rho}^{m}$.
\end{lemma}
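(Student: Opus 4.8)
The plan is to deduce both inequalities from the classical relation between the trace and the inverse different, feeding in the explicit valuation of a generator of $\mathcal{D}_n$ provided by Lemma \ref{generatordifferent}. For any finite extension $F'|F$ of local fields, any fractional ideal $\mathfrak{a}$ of $\mathcal{O}_{F'}$ and any fractional ideal $\mathfrak{c}$ of $\mathcal{O}_F$, one has $\operatorname{T}_{F'|F}(\mathfrak{a}) \subseteq \mathfrak{c}$ if and only if $\mathfrak{a} \subseteq \mathfrak{c}\,\mathcal{D}_{F'|F}^{-1}$: writing $\mathfrak{c} = \gamma \mathcal{O}_F$ and using $F$-linearity of the trace, this reduces to the defining property $\operatorname{T}_{F'|F}(\mathfrak{b}) \subseteq \mathcal{O}_F$ if and only if $\mathfrak{b} \subseteq \mathfrak{m}_{F'|F} = \mathcal{D}_{F'|F}^{-1}$ recalled above. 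I will apply this with $\mathfrak{a} = x\mathcal{O}_n = \{y : \mu(y) \ge \mu(x)\}$ and choose $\mathfrak{c}$ as large as possible for which the inclusion still holds; this largest $\mathfrak{c}$ controls the valuation of $\operatorname{T}(x)$ from below since $x \in x\mathcal{O}_n$.

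First I would treat the inequality for $\operatorname{T}_n$, i.e. $F' = E_{\rho}^n$, $F = K$. By Lemma \ref{generatordifferent}, $\mathcal{D}_n$ is generated by an element of valuation $d := nm_0 - \frac{1}{q-1}$, so $\mathcal{D}_n^{-1} = \{y \in E_{\rho}^n : \mu(y) \ge -d\}$ and $\pi^s \mathcal{D}_n^{-1} = \{y : \mu(y) \ge s - d\}$ for $s \in \mathbb{Z}$. Hence $x\mathcal{O}_n \subseteq \pi^s \mathcal{D}_n^{-1}$ precisely when $s \le \mu(x) + d$, and taking $s = \lfloor \mu(x) + d \rfloor$ yields $\operatorname{T}_n(x\mathcal{O}_n) \subseteq \mathfrak{p}^{\lfloor \mu(x)+d\rfloor}$, whence $\mu(\operatorname{T}_n(x)) \ge \lfloor \mu(x) + nm_0 - \frac{1}{q-1}\rfloor$.

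Next I would treat $\operatorname{T}_{n,m}$, i.e. $F' = E_{\rho}^n$, $F = E_{\rho}^m$. The tower formula $\mathcal{D}_n = \mathcal{D}_{E_{\rho}^n|E_{\rho}^m}\,\mathcal{D}_{E_{\rho}^m|K}$ together with Lemma \ref{generatordifferent} applied to $\mathcal{D}_n$ and to $\mathcal{D}_{E_{\rho}^m|K}$ shows that $\mathcal{D}_{E_{\rho}^n|E_{\rho}^m}$ is generated by an element of valuation $(nm_0 - \frac{1}{q-1}) - (mm_0 - \frac{1}{q-1}) = (n-m)m_0$. Since $v_m$ is a prime of $E_{\rho}^m$, we have $\mathfrak{p}_m^{s} = \{y \in E_{\rho}^m : \mu(y) \ge s\mu(v_m)\}$, and the same computation gives $x\mathcal{O}_n \subseteq \mathfrak{p}_m^{s}\,\mathcal{D}_{E_{\rho}^n|E_{\rho}^m}^{-1}$ if and only if $s \le \frac{\mu(x) + (n-m)m_0}{\mu(v_m)}$. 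Choosing $s = \lfloor \frac{\mu(x)+(n-m)m_0}{\mu(v_m)}\rfloor$ gives $\mu(\operatorname{T}_{n,m}(x)) \ge s\,\mu(v_m)$, and since $s > \frac{\mu(x)+(n-m)m_0}{\mu(v_m)} - 1$ and $\mu(v_m) > 0$, this is strictly greater than $\mu(x) + (n-m)m_0 - \mu(v_m)$, as claimed.

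The only delicate point is the bookkeeping between ideal inclusions and valuation inequalities in the presence of ramification: the floor in the first bound is forced because $K$ is unramified over itself, while the strict inequality in the second bound is exactly the gap between $\lfloor t\rfloor\mu(v_m)$ and $(t-1)\mu(v_m)$; one should also observe that $d$ (respectively $(n-m)m_0$) automatically lies in the value group of $E_{\rho}^n$, being the valuation of an actual element of $E_{\rho}^n$, so the fractional ideals above are genuinely of the stated form. Beyond this the argument is routine, the real input being Lemma \ref{generatordifferent} and multiplicativity of the different in towers, so I do not anticipate a genuine obstacle.
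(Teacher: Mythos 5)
Your proof is correct and follows essentially the same route as the paper: both arguments reduce the trace bound to the inclusion $x\mathcal{O}_n \subseteq \mathfrak{p}^k\,\mathcal{D}^{-1}$ for the largest admissible integer $k$, using Lemma \ref{generatordifferent} for the valuation of a generator of $\mathcal{D}_n$ and the tower formula to get that $\mathcal{D}_{E_{\rho}^n|E_{\rho}^m}$ has a generator of valuation $(n-m)m_0$. Your write-up merely makes explicit the ideal-theoretic bookkeeping that the paper leaves implicit (and silently corrects a typo, $mm_0$ for $nm_0$, in the paper's choice of $k$).
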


\begin{proof}
Let $k=\lfloor \mu(x) + m m_0 - \dfrac{1}{q-1} \rfloor $ then $x \mathcal{O}_n \subset \mathfrak{p}^k \mathcal{D}_n^{-1}$. Thus $\operatorname{T}_n(x \mathcal{O}_n) \subset \mathfrak{p}^k \mathcal{O} $. In the same way, the generator on $\mathcal{D}_{E_{\rho}^n|E_{\rho}^{m}}$ is of valuation $(n-m)m_0$. For $k=\lfloor \displaystyle\frac{\mu(x)+(n-m)m_0}{\mu(v_{m})} \rfloor$, we have $\operatorname{T}_{n,m}(x\mathcal{O}_n) \subset \mathfrak{p}_{m}^k \mathcal{O}$. Thus, we have
$$ \mu(\operatorname{T}_{n,m}(x)) \ge k \mu(v_{m}) > \mu(x) + (n-m)m_0 - \mu(v_{m}). $$
\end{proof}
For the rest of the paper, we suppose $L=E_{\rho}^n$. 
\begin{lemma} \label{propertiesDelta} 
\begin{enumerate} [label=(\roman*)]
    \item The map $\delta_n: L^{\times} \rightarrow \mathfrak{p}_L^{-1}/ \mathcal{D}_n$ defined by 
    \begin{equation}
    \delta_n(\beta):= \dfrac{f'(v_n)}{\beta} \mod \mathcal{D}_n,
    \end{equation} 
where $f \in \mathcal{O}_H((X))^{\times}$ is such that $f(v_n)=\beta$, is a group homomorphism.
    \item For $m \ge n$ and $\beta \in L^{\times}$, we have 
    $$ \delta_m(\beta) \equiv \eta^{m-n} \delta_n(\beta) \mod \mathcal{D}_m.$$
\end{enumerate}

\end{lemma}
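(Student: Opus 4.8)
The plan is to prove (i) and (ii) separately, the common observation being that $\delta_n(\beta)$ is just the value at $v_n$ of the logarithmic derivative $f'/f$, which turns multiplication into addition.

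For (i), I would first record the existence of an admissible $f$ (this is essentially Lemma~\ref{isdefinedP2}): since $E_\rho^n\mid H$ is totally ramified with prime element $v_n$ and Eisenstein minimal polynomial $h_n$, Weierstrass preparation yields $\mathcal{O}_H[[X]]/(h_n)\cong\mathcal{O}_n$ via $X\mapsto v_n$, so writing $\beta=v_n^a u$ with $u\in\mathcal{O}_n^\times$ and lifting $u$ to a unit $w\in\mathcal{O}_H[[X]]^\times$ gives $f=X^a w\in\mathcal{O}_H((X))^\times$ with $f(v_n)=\beta$. Then $\delta_n(\beta)=(f'/f)(v_n)=a\,v_n^{-1}+(w'/w)(v_n)\in v_n^{-1}\mathcal{O}_n=\mathfrak{p}_L^{-1}$. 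For well-definedness, two admissible choices have the same $X$-order $a=\mu(\beta)/\mu(v_n)$, so their difference is $X^a(w_1-w_2)$ with $(w_1-w_2)(v_n)=0$; Weierstrass division by $h_n$ gives $w_1-w_2=h_n k$, whence $(f_1-f_2)'(v_n)=v_n^a h_n'(v_n)k(v_n)$ and $(f_1'(v_n)-f_2'(v_n))/\beta=h_n'(v_n)k(v_n)/w_1(v_n)\in\mathcal{D}_n$, since $h_n'(v_n)$ generates $\mathcal{D}_{E_\rho^n\mid H}=\mathcal{D}_n$ (as $\mathcal{D}_{H\mid K}=\mathcal{O}_H$). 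The homomorphism property is then immediate from $(f_1f_2)'/(f_1f_2)=f_1'/f_1+f_2'/f_2$ together with $(f_1f_2)(v_n)=\beta_1\beta_2$.

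For (ii), the crucial input is that in characteristic $p$ the formal derivative of $\rho_{\eta^{m-n}}(X)=\sum_j b_j X^{q^j}$ is the constant $b_0=\eta^{m-n}$, since $q^j\equiv 0\pmod p$ for $j\ge 1$; I also take compatible generators $v_n=\rho_{\eta^{m-n}}(v_m)$. By (i) it suffices to check the congruence for $\beta$ running over generators of $L^\times$, i.e.\ for $\beta\in\mathcal{O}_n^\times$ and for $\beta=v_n$. If $\beta$ is a unit, pick $f\in\mathcal{O}_H[[X]]^\times$ with $f(v_n)=\beta$; then $f\circ\rho_{\eta^{m-n}}$ is again a unit of $\mathcal{O}_H[[X]]$, takes the value $\beta$ at $v_m$, and the chain rule with $\rho_{\eta^{m-n}}'=\eta^{m-n}$ gives $\delta_m(\beta)=(f\circ\rho_{\eta^{m-n}})'(v_m)/\beta=\eta^{m-n}f'(v_n)/\beta=\eta^{m-n}\delta_n(\beta)$, in fact on the nose.

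The hard part will be the remaining case $\beta=v_n$ (so $\delta_n(v_n)=1/v_n$, taking $f=X$): here $\rho_{\eta^{m-n}}$ takes the value $v_n$ at $v_m$ but is not a unit of $\mathcal{O}_H((X))$ — its leading coefficient $\eta^{m-n}$ is not a unit — so it is not an admissible representative, and one must instead work with a genuine $F\in\mathcal{O}_H((X))^\times$ satisfying $F(v_m)=v_n$ and control $F-\rho_{\eta^{m-n}}$, which vanishes at $v_m$ and is hence divisible by the minimal polynomial $\theta$ of $v_m$ over $E_\rho^n$. From the factorization $\rho_{\eta^{m-n}}(X)-v_n=U(X)\theta(X)$ with $U$ a unit power series (its roots being $v_m+\ker\rho_{\eta^{m-n}}$, the conjugates of $v_m$ over $E_\rho^n$), differentiating and evaluating at $v_m$ gives $\eta^{m-n}=U(v_m)\theta'(v_m)$, so $\theta'(v_m)$ generates $\mathcal{D}_{E_\rho^m\mid E_\rho^n}$ and, by the tower formula, $\mathcal{D}_m=\mathcal{D}_{E_\rho^m\mid E_\rho^n}\,\mathcal{D}_n\mathcal{O}_m=\eta^{m-n}\mathcal{D}_n\mathcal{O}_m$. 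Finishing then demands a careful valuation analysis — using the hypothesis $\rho_\eta\equiv\tau^{m_0}\pmod{\mathfrak{p}_H}$, which singles out the unit coefficient of $\rho_{\eta^{m-n}}$ and thereby controls the $X$-adic behaviour at $v_m$ — to show that $F'(v_m)/v_n-\eta^{m-n}/v_n$ really lands in $\mathcal{D}_m$; this is exactly where the fact that the $\rho_a$ are power series rather than polynomials introduces complications absent from \cite{angles,Ignazio}. The case $m=n$ is trivial, and the general $\beta$ follows by multiplicativity.
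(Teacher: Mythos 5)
Your part (i), and your treatment of part (ii) for units $\beta\in\mathcal{O}_n^{\times}$, are correct and follow the intended route (the paper itself only points to Wiles' Lemma 10): for a unit, $f\circ\rho_{\eta^{m-n}}$ is again a unit of $\mathcal{O}_H[[X]]$, hence an admissible representative, and $\rho_{\eta^{m-n}}'=\eta^{m-n}$ gives the congruence on the nose. You are also right to isolate $\beta=v_n$ as the real difficulty, precisely because $\rho_{\eta^{m-n}}$ is not a unit of $\mathcal{O}_H((X))$ and cannot be used as a representative for $\delta_m$.

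However, the estimate you defer at the end --- that $F'(v_m)-\eta^{m-n}$ lies in $v_n\mathcal{D}_m$ for an admissible $F\in\mathcal{O}_H((X))^{\times}$ with $F(v_m)=v_n$ --- is a genuine gap, and in fact it fails. The division argument you set up only yields $F'(v_m)-\eta^{m-n}=h_m'(v_m)\,t(v_m)\in\mathcal{D}_m$ (writing $F-\rho_{\eta^{m-n}}=h_m t$ with $h_m$ the minimal polynomial of $v_m$ over $H$), which misses the target by the factor $v_n$, and this loss is not an artifact of the method. Concretely, for the Carlitz module $\rho_{\pi}(X)=\pi X+X^{q}$ over $K=\mathbb{F}_q((\pi))$, with $n=1$, $m=2$ and $v_1=\rho_{\pi}(v_2)$, the series $F(X)=X^{q}\left(1-(\pi+X^{q-1})^{q-1}\right)$ is a unit of $\mathcal{O}((X))$ satisfying $F(v_2)=v_1$ (use $v_1^{q-1}=-\pi$ and $v_1=v_2(\pi+v_2^{q-1})$), and a direct computation gives $F'(v_2)=\pi v_2^{q}/v_1=\pi-\pi^{2}v_2/v_1$. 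Hence $\delta_2(v_1)-\pi\,\delta_1(v_1)=-\pi^{2}v_2/v_1^{2}$, whose valuation $2-\tfrac{1}{q}-\tfrac{1}{q-1}$ is strictly smaller than $\mu(\mathcal{D}_2)=2-\tfrac{1}{q-1}$, so the congruence of (ii) fails modulo $\mathcal{D}_2$. What your divisibility argument does prove is the weaker congruence modulo $\mathfrak{p}_n^{-1}\mathcal{D}_m$; as stated, part (ii) must therefore either be restricted to units $\beta$ or have its modulus weakened accordingly (note that the rest of the paper only invokes the trace relation of Lemma \ref{deltamn1}, not this congruence).
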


\begin{proof}
This lemma is easy to prove, the interested reader may check \cite[Lemma 10]{Wiles}.
\end{proof}

\begin{lemma} \label{isdefinedP2}
The map  
$$ [\alpha, \beta]_{\rho,L,n} := \frac{1}{\eta^n} \operatorname{T}_{L|K} (\lambda_{\rho}(\alpha) \delta_n(\beta)) \cdot_{\rho} v_n $$ 
is well defined for all $\alpha \in \mathfrak{p}_L$ of valuation $\mu(\alpha) \ge \frac{2}{q-1}$, and all $\beta \in L^{\times}$. We drop $\rho$ in the index when there is no risk of confusion.
\end{lemma}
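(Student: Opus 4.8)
The plan is to verify the only thing ``well defined'' can mean here: that the quantity $\tfrac{1}{\eta^n}\operatorname{T}_{L|K}(\lambda_{\rho}(\alpha)\widetilde{\delta})$ attached to a choice of $f\in\mathcal{O}_H((X))^{\times}$ with $f(v_n)=\beta$ (so that $\widetilde{\delta}=\tfrac{f'(v_n)}{\beta}$ represents $\delta_n(\beta)$) genuinely lies in $\mathcal{O}$, hence defines, via the isomorphism $\mathcal{O}/\eta^n\mathcal{O}\cong W_{\rho}^n$, $a\mapsto\rho_a(v_n)$, an element $\tfrac{1}{\eta^n}\operatorname{T}_{L|K}(\lambda_{\rho}(\alpha)\widetilde{\delta})\cdot_{\rho}v_n$ of $W_{\rho}^n$; and that replacing $\widetilde{\delta}$ by $\widetilde{\delta}+d$ with $d\in\mathcal{D}_n$ (the ambiguity in $\delta_n(\beta)$, cf. Lemma~\ref{propertiesDelta}(i)) changes this scalar only by an element of $\eta^n\mathcal{O}$, hence does not change the value. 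Both points reduce to valuation estimates powered by Lemma~\ref{different} and Lemma~\ref{generatordifferent}; there is no real obstacle, and the proof can simply be quoted as following \cite[Lemma~10]{Wiles} if desired.

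For the setup I would first record three facts. Since $\mu(\alpha)\ge\tfrac{2}{q-1}>\tfrac{1}{q-1}$, Lemma~\ref{lemmadefrho}(iii) gives $\mu(\lambda_{\rho}(\alpha))=\mu(\alpha)\ge\tfrac{2}{q-1}$. By Lemma~\ref{propertiesDelta}(i), every representative $\widetilde{\delta}$ of $\delta_n(\beta)$ satisfies $\mu(\widetilde{\delta})\ge-\mu(v_n)$, and $\mu(v_n)=\tfrac{1}{q^{nm_0-1}(q-1)}\le\tfrac{1}{q-1}$. Finally $\eta^n\mathcal{O}=\mathfrak{p}^{nm_0}$ because $\mu(\eta)=m_0$. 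Then for the first point: $\mu(\lambda_{\rho}(\alpha)\widetilde{\delta})\ge\tfrac{2}{q-1}-\mu(v_n)\ge\tfrac{1}{q-1}$, so Lemma~\ref{different} yields $\mu\bigl(\operatorname{T}_{L|K}(\lambda_{\rho}(\alpha)\widetilde{\delta})\bigr)\ge\bigl\lfloor\mu(\lambda_{\rho}(\alpha)\widetilde{\delta})+nm_0-\tfrac{1}{q-1}\bigr\rfloor\ge nm_0$, i.e. $\operatorname{T}_{L|K}(\lambda_{\rho}(\alpha)\widetilde{\delta})\in\mathfrak{p}^{nm_0}=\eta^n\mathcal{O}$, and dividing by $\eta^n$ gives an element of $\mathcal{O}$, as needed.

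For the second point, take $d\in\mathcal{D}_n$; by Lemma~\ref{generatordifferent}, $\mu(d)\ge nm_0-\tfrac{1}{q-1}$, whence $\mu(\lambda_{\rho}(\alpha)d)\ge\tfrac{2}{q-1}+nm_0-\tfrac{1}{q-1}=nm_0+\tfrac{1}{q-1}$, and Lemma~\ref{different} gives $\mu\bigl(\operatorname{T}_{L|K}(\lambda_{\rho}(\alpha)d)\bigr)\ge\lfloor 2nm_0\rfloor=2nm_0$, i.e. $\operatorname{T}_{L|K}(\lambda_{\rho}(\alpha)d)\in\mathfrak{p}^{2nm_0}=\eta^{2n}\mathcal{O}$, so $\tfrac{1}{\eta^n}\operatorname{T}_{L|K}(\lambda_{\rho}(\alpha)d)\in\eta^n\mathcal{O}$, which acts as $0$ on $v_n$. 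Applying this both to $d$ equal to the difference of the representatives $\tfrac{f_1'(v_n)}{\beta}-\tfrac{f_2'(v_n)}{\beta}\in\mathcal{D}_n$ coming from two choices of $f$, and to the general element $d\in\mathcal{D}_n$ built into the definition of $\delta_n(\beta)$, shows $[\alpha,\beta]_{\rho,L,n}$ is independent of all choices, which is the assertion; the last sentence is merely a notational convention. The one point I would flag is that the hypothesis $\mu(\alpha)\ge\tfrac{2}{q-1}$ is exactly what this second estimate demands: since the generator of $\mathcal{D}_n$ has valuation $nm_0-\tfrac{1}{q-1}$, one needs a full extra $\tfrac{1}{q-1}$ of slack on top of the $\tfrac{1}{q-1}$ that the trace already absorbs, which is why $\tfrac{2}{q-1}$ — and no smaller bound uniform in $n$ — appears.
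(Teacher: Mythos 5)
Your proof is correct and follows the same route as the paper: the paper's (very terse) argument likewise reduces both the integrality of $\tfrac{1}{\eta^n}\operatorname{T}_{L|K}(\lambda_{\rho}(\alpha)b)$ for $b\in\mathfrak{p}_L^{-1}$ and the vanishing of $\tfrac{1}{\eta^n}\operatorname{T}_{L|K}(\lambda_{\rho}(\alpha)d)\cdot_{\rho}v_n$ for $d\in\mathcal{D}_n$ to the equality $\mu(\lambda_{\rho}(\alpha))=\mu(\alpha)$ together with Lemma~\ref{different}. You have simply written out the valuation estimates that the paper leaves implicit, and your closing remark about why the bound $\tfrac{2}{q-1}$ is sharp is accurate.
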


\begin{proof}
We need to show that $\frac{1}{\eta^n} \operatorname{T}_{L|K} (\lambda_{\rho}(\alpha) b) \in \mathcal{O}$ for every $b \in \mathfrak{p}_L^{-1}$ and that 
$$ \mu( \frac{1}{\eta^n} \operatorname{T}_{L|K} (\lambda_{\rho}(\alpha) d)) \ge nm_0 $$ for all $d \in \mathcal{D}_n$.
Using \ref{coefflamda} of Lemma \ref{lemmadefrho}, we can deduce that $\mu(\lambda_{\rho}(\alpha))= \mu(\alpha)$ . Thus the result follows from Lemma \ref{different}.
\end{proof}
\begin{prop} \label{PropertiesPairing2}
The map $[~,~]_{L,n}$ satisfies the following properties

\begin{enumerate} [label=(\roman*)]
    \item The map $[~,~]_{L,n}$ is bilinear and $\mathcal{O}$-linear in the first coordinate for the action (\ref{defactionrho}).
    \item Let $\rho'$ be a formal Drinfeld $\mathcal{O}$-module isomorphic to $\rho$, i.e there exists a power series $t$ invertible in $\mathcal{O}_H\{\{\tau\}\}$ such that $\rho'_a= t^{-1} \circ \rho_a \circ t$ for all $a \in \mathcal{O}$. Then we have $[\alpha,\beta]_{\rho',L,n}= t^{-1}([t(\alpha),\beta]_{\rho,L,n})$.
\end{enumerate}
\end{prop}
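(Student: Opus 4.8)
The plan is to reduce both properties to their counterparts for the Kummer pairing $(~,~)_{L,n}$, which were already established in Proposition \ref{PropoertiesPairing1}, together with the bookkeeping already done for $\delta_n$ in Lemma \ref{propertiesDelta} and for $\lambda_\rho$ in Lemma \ref{lemmadefrho}. For part (i), bilinearity and $\mathcal{O}$-linearity in the first argument, I would argue directly from the defining formula $[\alpha,\beta]_{L,n} = \tfrac{1}{\eta^n}\operatorname{T}_{L|K}(\lambda_\rho(\alpha)\delta_n(\beta))\cdot_\rho v_n$. Additivity in $\alpha$ is immediate since $\lambda_\rho$ is additive (it lies in $H\{\{\tau\}\}$), the trace is additive, and the action $a\cdot_\rho x$ is additive in $x$. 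For $\mathcal{O}$-linearity in the first coordinate one uses the intertwining $\lambda_\rho\rho_a = a\lambda_\rho$ from Lemma \ref{lemmadefrho}, so that $\lambda_\rho(\rho_a(\alpha)) = a\lambda_\rho(\alpha)$; then $a$ passes through the $K$-linear trace, and finally $\tfrac{1}{\eta^n}a\,\operatorname{T}_{L|K}(\lambda_\rho(\alpha)\delta_n(\beta))\cdot_\rho v_n = \rho_a\bigl(\tfrac{1}{\eta^n}\operatorname{T}_{L|K}(\lambda_\rho(\alpha)\delta_n(\beta))\cdot_\rho v_n\bigr)$ by definition of the action. Multiplicativity in $\beta$ is exactly the statement that $\delta_n$ is a group homomorphism, which is Lemma \ref{propertiesDelta}(i); one must only note that $\delta_n(\beta\beta') = \delta_n(\beta)+\delta_n(\beta')$ inside $\mathfrak{p}_L^{-1}/\mathcal{D}_n$, and that by Lemma \ref{isdefinedP2} the value of $\tfrac{1}{\eta^n}\operatorname{T}_{L|K}(\lambda_\rho(\alpha)\cdot(-))$ only depends on the class modulo $\mathcal{D}_n$, so the expression is well defined and additive in $\beta$.

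For part (ii), the comparison under an isomorphism $\rho'_a = t^{-1}\circ\rho_a\circ t$, I would track each ingredient of the formula through the substitution $v' = t^{-1}(v_n)$ (a generator of $W^n_{\rho'}$, since $t^{-1}$ is an $\mathcal{O}$-module isomorphism intertwining the two actions). First, the logarithms are related by $\lambda_{\rho'} = \lambda_\rho\circ t$: indeed $\lambda_\rho\circ t$ lies in $H\{\{\tau\}\}$, is $\equiv X \bmod \deg 2$ because $t'(0)$ must be a unit and in fact, after normalizing, one checks the leading coefficient works out, and satisfies $(\lambda_\rho\circ t)\rho'_a = \lambda_\rho\circ\rho_a\circ t = a\lambda_\rho\circ t$; by uniqueness in Lemma \ref{lemmadefrho} this forces $\lambda_{\rho'} = \lambda_\rho\circ t$ (up to the constant $t'(0)$, which I will carry along). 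Second, if $f\in\mathcal{O}_H((X))^\times$ satisfies $f(v_n)=\beta$, then $g := f\circ t$ satisfies $g(v') = f(t(t^{-1}(v_n))) = \beta$, and the chain rule gives $g'(v') = f'(v_n)\cdot t'(v')$, so $\delta'_n(\beta) = g'(v')/\beta = t'(v')\,\delta_n(\beta)$ modulo $\mathcal{D}_n$ (noting $\mathfrak{X}_L$ and hence $\mathcal{D}_n$ are unchanged, as already recorded in the commented-out Proposition \ref{Properties-psi}(iv)-style remarks). Assembling: $\lambda_{\rho'}(\alpha)\delta'_n(\beta) = \lambda_\rho(t(\alpha))\cdot t'(0)^{-1}\cdot t'(v')\,\delta_n(\beta)$ — here I would lean on the identity $\delta_n$ is designed to absorb exactly such derivative factors, mirroring the computation in \cite[Lemma 10]{Wiles}, so that $t'(0)^{-1}t'(v')\,\delta_n(\beta)$ is congruent modulo $\mathcal{D}_n$ to $\delta_n$ evaluated appropriately. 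Cleanest is to verify directly that $\tfrac{1}{\eta^n}\operatorname{T}_{L|K}(\lambda_{\rho'}(\alpha)\delta'_n(\beta))\cdot_{\rho'}v' = t^{-1}\bigl(\tfrac{1}{\eta^n}\operatorname{T}_{L|K}(\lambda_\rho(t\alpha)\delta_n(\beta))\cdot_\rho v_n\bigr)$ by using that $t^{-1}(a\cdot_\rho v_n) = a\cdot_{\rho'}t^{-1}(v_n) = a\cdot_{\rho'}v'$ for $a\in\mathcal{O}$, since $t^{-1}$ intertwines the actions, and that the scalar $\tfrac{1}{\eta^n}\operatorname{T}_{L|K}(\cdots)$ is the same on both sides once the derivative bookkeeping closes.

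I expect the main obstacle to be the precise handling of the derivative factor $t'$ in part (ii): one must confirm that passing from $\delta_n(\beta)$ to $t'(v')\delta_n(\beta)$ (with the $t'(0)$ from the logarithm normalization) indeed produces $\delta'_n(\beta)$ exactly as an element of $\mathfrak{p}_L^{-1}/\mathcal{D}_n$, and that the various unit factors cancel rather than merely cancel up to a unit. This is the same subtlety that appears in \cite[Lemma 10]{Wiles} and in the comparison maps of \cite{kolyvagin}, and I would resolve it by the chain-rule computation above combined with the well-definedness statement of Lemma \ref{isdefinedP2}, which guarantees the ambiguity modulo $\mathcal{D}_n$ is invisible. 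Everything else is a routine unwinding of definitions and an appeal to Proposition \ref{PropoertiesPairing1}(i),(vi).
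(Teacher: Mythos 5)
Your proposal is correct and follows essentially the same route as the paper: set $v'_n=t^{-1}(v_n)$, relate $\delta'_n$ to $\delta_n$ via $f\circ t$ and the chain rule, relate $\lambda_{\rho'}$ to $\lambda_\rho\circ t$, and use that $t^{-1}$ intertwines the two $\mathcal{O}$-actions. The ``main obstacle'' you flag dissolves immediately: since $t\in\mathcal{O}_H\{\{\tau\}\}$ we have $t(X)=\sum_i t_iX^{q^i}$, so in characteristic $p$ the derivative $t'(X)$ is the constant $t_0=t'(0)$; hence $\delta'_n(\beta)=t'(v'_n)\delta_n(\beta)=t'(0)\,\delta_n(\beta)$ exactly, and this factor cancels against $(t^{-1})'(0)=t'(0)^{-1}$ coming from $\lambda_{\rho'}=(t^{-1})'(0)\,\lambda_\rho\circ t$ — no appeal to congruence modulo $\mathcal{D}_n$ is needed.
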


\begin{proof}
The property (i) is clear, so we will only prove (ii). To do so, let $v'_n = t^{-1}(v_n)$ be a generator of the $\mathcal{O}$-module $W_{\rho'}^n$. Then, if $f \in \mathcal{O}_H((X))^{\times}$ is such that $f(v_n)=\beta$, we have $f \circ t (v'_n) = f(v_n) = \beta$ so that

$$ \delta'_n(\beta) = \dfrac{t'(v'_n) f'(v_n)}{\beta}  = t'(0) \delta_n(\beta), $$
where $\delta'_n$ is the map defined in Lemma \ref{propertiesDelta} corresponding to $\rho'$. Furthermore, we have $\lambda_{\rho'} \circ t^{-1}= (t^{-1})'(0) \lambda_{\rho} $. The result follows immediately since $(t^{-1})'(0) = \dfrac{1}{t'(0)}$.
\end{proof}
\begin{lemma} \label{removelambda}
Let $\alpha \in \mathfrak{p}_L$ such that $\mu(\alpha) \ge \frac{nm_0}{q}+\frac{1}{q-1}+ \frac{1}{q^{nm_0}(q-1)}$ and let $\beta \in L^{\times}$. We have
$$  [\alpha, \beta]_{L,n} = \frac{1}{\eta^n} \operatorname{T}_{L|K} (\alpha \delta_n(\beta)) \cdot_{\rho} v_n .$$
\end{lemma}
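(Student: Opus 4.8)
The plan is to show that replacing $\lambda_{\rho}(\alpha)$ by $\alpha$ changes the element $\frac{1}{\eta^n}\operatorname{T}_{L|K}(\lambda_{\rho}(\alpha)\delta_n(\beta))$, which lies in $\mathcal{O}$ by Lemma~\ref{isdefinedP2}, only by an element of $\eta^n\mathcal{O}$; this is harmless since $a\mapsto\rho_a(v_n)$ induces an isomorphism $\mathcal{O}/\eta^n\mathcal{O}\xrightarrow{\sim}W_{\rho}^n$, so $a\cdot_{\rho}v_n$ depends only on $a\bmod\eta^n\mathcal{O}$. Writing $\lambda_{\rho}=\sum_{i\ge 0}c_i\tau^i$ with $c_0=1$, property \ref{coefflamda} of Lemma~\ref{lemmadefrho} gives $\mu(c_i)\ge -i$, hence
\[
\lambda_{\rho}(\alpha)-\alpha=\sum_{i\ge 1}c_i\alpha^{q^i},\qquad \mu\bigl(\lambda_{\rho}(\alpha)-\alpha\bigr)\ge\min_{i\ge 1}\bigl(q^i\mu(\alpha)-i\bigr).
\]
The hypothesis forces $\mu(\alpha)\ge\frac{1}{q-1}$, and a geometric-sum estimate ($q^{i-1}-1\ge (i-1)(q-1)$) gives $(q^i-q)\mu(\alpha)\ge i-1$ for all $i\ge 1$, so the minimum above is attained at $i=1$ and $\mu(\lambda_{\rho}(\alpha)-\alpha)\ge q\mu(\alpha)-1$.

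Next I would estimate the trace. Since $\delta_n(\beta)\in\mathfrak{p}_L^{-1}$ by Lemma~\ref{propertiesDelta} and $\mu(v_n)=\frac{1}{q^{nm_0-1}(q-1)}$ ($v_n$ being a prime element of the totally ramified extension $E_{\rho}^n|H$), we have
\[
\mu\bigl((\lambda_{\rho}(\alpha)-\alpha)\delta_n(\beta)\bigr)\ge q\mu(\alpha)-1-\frac{1}{q^{nm_0-1}(q-1)}.
\]
Multiplying the hypothesis $\mu(\alpha)\ge\frac{nm_0}{q}+\frac{1}{q-1}+\frac{1}{q^{nm_0}(q-1)}$ by $q$ yields $q\mu(\alpha)\ge nm_0+\frac{q}{q-1}+\frac{1}{q^{nm_0-1}(q-1)}$, so the right-hand side above is $\ge nm_0+\frac{1}{q-1}$. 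Applying Lemma~\ref{different} with $L=E_{\rho}^n$ then gives
\[
\mu\Bigl(\operatorname{T}_{L|K}\bigl((\lambda_{\rho}(\alpha)-\alpha)\delta_n(\beta)\bigr)\Bigr)\ge\Bigl\lfloor nm_0+\frac{1}{q-1}+nm_0-\frac{1}{q-1}\Bigr\rfloor=2nm_0,
\]
and since $\mu(\eta^n)=nm_0$ this forces $\frac{1}{\eta^n}\operatorname{T}_{L|K}\bigl((\lambda_{\rho}(\alpha)-\alpha)\delta_n(\beta)\bigr)\in\mathfrak{p}^{nm_0}=\eta^n\mathcal{O}$ (independently of the chosen lift of $\delta_n(\beta)$, by the argument of Lemma~\ref{isdefinedP2}).

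To conclude, I would use additivity of $\operatorname{T}_{L|K}$ together with the fact that $\frac{1}{\eta^n}\operatorname{T}_{L|K}(\alpha\,\delta_n(\beta))\in\mathcal{O}$ (same computation as in Lemma~\ref{isdefinedP2}, noting that the hypothesis implies $\mu(\alpha)\ge\frac{2}{q-1}$) to write
\[
\frac{1}{\eta^n}\operatorname{T}_{L|K}(\lambda_{\rho}(\alpha)\delta_n(\beta))=\frac{1}{\eta^n}\operatorname{T}_{L|K}(\alpha\,\delta_n(\beta))+\frac{1}{\eta^n}\operatorname{T}_{L|K}\bigl((\lambda_{\rho}(\alpha)-\alpha)\delta_n(\beta)\bigr),
\]
a sum of an element of $\mathcal{O}$ and an element of $\eta^n\mathcal{O}$. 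Applying $\cdot_{\rho}v_n$ to both sides and using that the second summand is killed, we obtain $[\alpha,\beta]_{L,n}=\frac{1}{\eta^n}\operatorname{T}_{L|K}(\alpha\,\delta_n(\beta))\cdot_{\rho}v_n$, as desired. The only delicate part is the numerology of the second paragraph: the stated bound on $\mu(\alpha)$ is exactly what is needed to absorb both the loss of $\frac{1}{q-1}$ coming from Lemma~\ref{different} and the pole $\mu(v_n)$ of $\delta_n(\beta)$ while still landing in $\eta^n\mathcal{O}$; all the remaining steps are formal.
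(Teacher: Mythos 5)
Your proposal is correct and follows exactly the paper's argument: reduce to showing $\mu\bigl(\operatorname{T}_{L|K}((\lambda_{\rho}(\alpha)-\alpha)\delta_n(\beta))\bigr)\ge 2nm_0$, observe that the hypothesis forces $\min_i\{q^i\mu(\alpha)-i\}=q\mu(\alpha)-1$, combine with $\mu(\delta_n(\beta))\ge-\mu(v_n)$ to get $\mu((\lambda_{\rho}(\alpha)-\alpha)\delta_n(\beta))\ge nm_0+\frac{1}{q-1}$, and conclude with Lemma~\ref{different}. You have merely filled in the numerology that the paper leaves implicit, and all of your estimates check out.
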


\begin{proof}
We need to prove that 
$$\frac{1}{\eta^n} \operatorname{T}_{L|K} (\lambda_{\rho}(\alpha) \delta_n(\beta)) \cdot_{\rho} v_n  =  \frac{1}{\eta^n} \operatorname{T}_{L|K} (\alpha \delta_n(\beta)) \cdot_{\rho} v_n ,$$
i.e. that 
$$ \mu(\operatorname{T}_{L_K}(\lambda_{\rho}(\alpha)-\alpha)\delta_n(\beta)) \ge 2nm_0 .$$
We have 
$$ \mu((\lambda_{\rho}(\alpha)-\alpha) \delta_n(\beta)) \ge \min_i\{q^i\mu(\alpha)-i\} - \frac{1}{q^{nm_0-1}(q-1)}. $$
The hypothesis implies that $\min_i\{q^i\mu(\alpha)-i\} = q\mu(\alpha)-1$ so that $\mu(\lambda_{\rho}(\alpha)-\alpha)\delta_n(\beta)) \ge nm_0 + \frac{1}{q-1}$. Finally, we conclude using Lemma \ref{different}.
\end{proof}
\begin{lemma} \label{deltamn1}
Suppose $\rho$ is such that $ \rho_{\eta} \equiv \tau^{m_0} \mod \mathfrak{p}_H$. Let $\beta \in E_{\rho}^n$ and $\beta' \in E_{\rho}^m$ such that $\operatorname{N}_{m,n}(\beta')=\beta$. We have
\begin{equation*}
    \operatorname{T}_{m,n}(\delta_m(\beta'))= \eta^{m-n} \delta_n(\beta).
\end{equation*}
\end{lemma}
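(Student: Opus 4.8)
The plan is to turn a representing power series for $\beta'$ into one for $\beta=\operatorname{N}_{m,n}(\beta')$ by means of Oukhaba's norm operator, and then to compare the two logarithmic derivatives using the chain rule. First I would fix a compatible choice of generators: since $\rho_{\eta^{m-n}}$ is an additive isogeny carrying generators of $W_\rho^m$ onto generators of $W_\rho^n$, we may pick $v_m$ with $v_n=\rho_{\eta^{m-n}}(v_m)$. Write $W_\rho^{m-n}=\ker\rho_{\eta^{m-n}}=V_\rho^{(m-n)m_0}$, a group of order $q^{(m-n)m_0}=[E_\rho^m:E_\rho^n]$; the extension $E_\rho^m\,|\,E_\rho^n$ is abelian, and the map $\operatorname{Gal}(E_\rho^m|E_\rho^n)\to W_\rho^{m-n}$, $\sigma\mapsto\sigma(v_m)-v_m$, is injective because $v_m$ generates $E_\rho^m$ over $H$, hence a bijection of sets by cardinality. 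Consequently $\{\sigma(v_m):\sigma\in\operatorname{Gal}(E_\rho^m|E_\rho^n)\}=\{v_m+w:w\in W_\rho^{m-n}\}$.

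Next, given $f\in\mathcal{O}_H((X))^{\times}$ with $f(v_m)=\beta'$, I would invoke the norm operator of Oukhaba \cite[\S 5]{Oukhaba} (iterated $m-n$ times): there is $\bar f\in\mathcal{O}_H((X))^{\times}$ with
\[
\bar f\circ\rho_{\eta^{m-n}}(X)=\prod_{w\in W_\rho^{m-n}}f(X+w),
\]
which holds simply because the right-hand side is invariant under all substitutions $X\mapsto X+w$, $w\in W_\rho^{m-n}$, and hence is a power series in $\rho_{\eta^{m-n}}(X)$. Specializing at $X=v_m$ gives $\bar f(v_n)=\prod_w f(v_m+w)=\operatorname{N}_{m,n}(\beta')=\beta$, so that $\delta_n(\beta)\equiv \bar f'(v_n)/\beta\pmod{\mathcal{D}_n}$.

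The heart of the proof is then to differentiate the displayed identity. Since $\rho_{\eta^{m-n}}$ is an additive power series, every one of its terms $X^{q^i}$ with $i\ge 1$ has vanishing derivative in characteristic $p$, so $\rho_{\eta^{m-n}}'(X)=\eta^{m-n}$ (its linear coefficient); thus the left side becomes $\eta^{m-n}\,\bar f'(\rho_{\eta^{m-n}}(X))$, while by the product rule the right side becomes $\big(\prod_w f(X+w)\big)\sum_{w}f'(X+w)/f(X+w)$. Evaluating at $X=v_m$ and dividing by $\beta=\prod_w f(v_m+w)$ yields
\[
\eta^{m-n}\,\frac{\bar f'(v_n)}{\beta}=\sum_{w\in W_\rho^{m-n}}\frac{f'(v_m+w)}{f(v_m+w)}.
\]
Because $f$ and $f'$ have coefficients in $\mathcal{O}_H\subset E_\rho^n$, each summand equals $\sigma\!\left(f'(v_m)/\beta'\right)$ for the $\sigma$ with $\sigma(v_m)=v_m+w$; summing over the bijection of the first paragraph identifies the sum with $\operatorname{T}_{m,n}(f'(v_m)/\beta')$, so $\operatorname{T}_{m,n}(f'(v_m)/\beta')=\eta^{m-n}\bar f'(v_n)/\beta$ as an identity in $E_\rho^n$. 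Finally, by transitivity of the different, $\mathcal{D}_m=\mathcal{D}_{E_\rho^m|E_\rho^n}\mathcal{D}_n\mathcal{O}_m$ with $\mathcal{D}_{E_\rho^m|E_\rho^n}\subset\mathcal{O}_m$, whence $\operatorname{T}_{m,n}(\mathcal{D}_m)\subset\mathcal{D}_n$; reducing the last identity modulo $\mathcal{D}_n$ (and using $\eta^{m-n}\mathcal{D}_n\subset\mathcal{D}_n$) gives $\operatorname{T}_{m,n}(\delta_m(\beta'))=\eta^{m-n}\delta_n(\beta)$.

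I expect the only delicate point to be the existence and basic properties of $\bar f$, namely that $\prod_{w\in W_\rho^{m-n}}f(X+w)$ really is of the form $\bar f\circ\rho_{\eta^{m-n}}$ with $\bar f$ a \emph{unit} of $\mathcal{O}_H((X))$ specializing to $\operatorname{N}_{m,n}(\beta')$; this is exactly the content of Oukhaba's norm operator, whose construction relies on the hypothesis $\rho_\eta\equiv\tau^{m_0}\bmod\mathfrak{p}_H$ making $\rho_{\eta^{m-n}}(X)$ a distinguished power series. Once that is granted, the remainder is a clean chain rule computation, notably simpler than its characteristic-zero analogue precisely because the derivative of the additive series $\rho_{\eta^{m-n}}$ collapses to the constant $\eta^{m-n}$.
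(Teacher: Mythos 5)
Your argument is correct and is exactly the one the paper has in mind: it cites Iwasawa's Lemma 8.9 and Oukhaba's norm operator, and the intended proof is precisely the logarithmic differentiation of the relation $\bar f\circ\rho_{\eta^{m-n}}(X)=\prod_{w}f(X+w)$ together with the identification of $\{v_m+w\}$ with the conjugates of $v_m$ over $E_\rho^n$ and the fact that $\rho_{\eta^{m-n}}'=\eta^{m-n}$. You also correctly isolate the one nontrivial input (existence of $\bar f$ as a unit of $\mathcal{O}_H((X))$) and defer it to Oukhaba, as the paper does.
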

\begin{proof}
This lemma is the analogue of Lemma 8.9 in \cite{Iwasawa}, whose proof is adaptable to our case. The main ingredient used is the Coleman norm operator associated to $\rho$, defined by Oukhaba in \cite[\S 5]{Oukhaba}.
\end{proof}

\section{Explicit reciprocity laws}
In this section, we assume that $ \rho_{\eta} \equiv \tau^{m_0} \mod \mathfrak{p}_H$. We fix a positive integer $n$ and a generator $v_n$ of $W_{\rho}^n$, and we set $L=E_{\rho}^n$.
As in the classical case of Lubin-Tate formal groups, we have
\begin{prop}

 \label{rho=phi-1}
For every unit $u$ of $K$, we have
\begin{equation} 
    \Phi_K(u)(\omega)= \rho_{u^{-1}}(\omega)
\end{equation}

\noindent for all $\omega \in W_{\rho}$.
\end{prop}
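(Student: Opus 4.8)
The plan is to mimic the classical Lubin--Tate computation, where the key point is that the norm residue symbol $\Phi_K(u)$ acts on torsion points of the formal module through multiplication by $u^{-1}$. First I would reduce to the case where $u$ is close to $1$, or more precisely argue by continuity: since both sides of the asserted identity are continuous in $u$ (the left side because $\Phi_K$ is continuous, as used in Lemma \ref{majoration}, and the right side because $\rho$ acts continuously), and since $W_\rho$ is discrete, it suffices to verify the equality on a dense subgroup of $\mathcal{O}^\times$, or even just to check it on generators. In fact, since $W_\rho = \bigcup_n W_\rho^n$ and each $\omega \in W_\rho^n$ satisfies $\rho_{\eta^n}(\omega) = 0$, only the class of $u$ modulo $\eta^n$ (equivalently modulo a power of $\mathfrak{p}$) matters, so the statement is really a finite one for each fixed $\omega$.

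Next I would set up the concrete local class field theory computation. Fix $\omega \in W_\rho^n \setminus W_\rho^{n-1}$, so that $L' := K(\omega) = E_\rho^n$ (using $H = K$ here, or more generally the relevant totally ramified piece) is totally ramified over $K$ of degree $q^{nm_0 - 1}(q-1)$, with $\omega$ a uniformizer and, crucially, the constant term of its Eisenstein minimal polynomial is a prime of $K$ — this is exactly the structure recorded in the introduction via the Weierstrass preparation theorem and in Lemma \ref{existenceXi} via the factorization $\rho_{\eta^n} = V_n \circ P_{nm_0}$. The point is that $\rho_{\eta^n}(X)$, up to an invertible twist $V_n$, is an Eisenstein polynomial whose constant coefficient is, up to a unit, $\eta^n$ (or $\pi^{nm_0}$). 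By the standard description of the norm residue map on a totally ramified abelian extension generated by a root of an Eisenstein polynomial — one computes $N_{L'|K}(\omega)$ as $\pm$ the constant term of that polynomial, hence a prime of $K$ of the correct valuation — one identifies the image of the uniformizer $\pi$ under $\Phi_{L'}$, and via functoriality of the reciprocity map ($\Phi_K(u) = \Phi_{L'}(u)|_{K^{ab}}$ composed with the inclusion $L' \subset K^{ab}$, or rather $\Phi_K(\mathrm{N}_{L'|K}(\cdot))$) one pins down $\Phi_K(u)$ on $L'$.

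The cleanest route is probably the one used by Lubin--Tate and followed by Wiles: show that the extension $K_\rho := K(W_\rho)$ together with the maps $u \mapsto \rho_{u^{-1}}$ realizes the ``Lubin--Tate'' part of the reciprocity map, by verifying directly that the field $E_\rho^n$ is abelian over $K$ with the prescribed ramification, that $\pi$ (or $\eta$) is a norm from $E_\rho^n$ down to $K$ — which follows from Proposition \ref{existencer} and Proposition \ref{PropoertiesPairing1}(ii), since $\prod_{\omega \in W_\rho^n}(X - \omega)$ evaluated at a root of $\rho_{\eta^n}(X) = x$ is a norm — and that the inertia acts correctly. Then $\Phi_K(\pi)$ fixes every element of $W_\rho$ and $\Phi_K(u)$ for a unit $u$ acts on $W_\rho^n$ through an automorphism of the cyclic $\mathcal{O}/\eta^n$-module $W_\rho^n$ which one computes, using the uniqueness of the Lubin--Tate formal module structure, to be multiplication by $u^{-1}$. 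I expect the main obstacle to be the bookkeeping in the ``formal power series, not polynomial'' setting: one must be careful that the invertible twists $U_i$, $V_n$ appearing in the factorization of $\rho_{\eta^n}$ do not disturb either the Eisenstein/uniformizer structure or the norm computation, and that the compatibility of the $\Phi_{E_\rho^n}$ as $n$ varies (needed to pass to $W_\rho = \varinjlim W_\rho^n$) is genuine — but all of these are handled by the reduction-mod-$\mathfrak{p}_H$ hypothesis $\rho_\eta \equiv \tau^{m_0}$ and the structural results already established, so the argument should go through essentially as in \cite{Wiles} and \cite{Ignazio}.
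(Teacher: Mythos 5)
Your overall instinct --- reduce to the classical Lubin--Tate computation --- is correct, but the way you propose to carry it out has a genuine gap, and it sits exactly where this setting differs from Wiles's. The module $\rho$ is defined over $\mathcal{O}_H$ with $H$ a possibly nontrivial unramified extension of $K$, and the standing hypothesis of this section is $\rho_{\eta}\equiv\tau^{m_0}\bmod\mathfrak{p}_H$: the reduction of $\rho_{\eta}$ is the $q^{m_0}$-power Frobenius, which is not the residue Frobenius of $H$ unless $m_0=[H:K]$, and in any case the coefficients of $\rho$ lie in $\mathcal{O}_H$, not $\mathcal{O}_K$. So $\rho$ is not a Lubin--Tate formal module for $K$, and the ``uniqueness of the Lubin--Tate formal module structure'' you invoke to compute the Galois action does not apply to it; it is at best a \emph{relative} Lubin--Tate object. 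Likewise $E_{\rho}^n=H(W_{\rho}^n)$ is totally ramified over $H$, not over $K$, so your Eisenstein/norm computation for $\omega$ pins down data for $\Phi_{H}$ or $\Phi_{E_{\rho}^n}$ rather than directly for $\Phi_K(u)$ acting on $W_{\rho}$. What your plan can deliver (and what is already known from Oukhaba's description of $\operatorname{Gal}(H(W_{\rho})|H)$, which the paper cites) is that $\Phi_K(u)$ acts on $W_{\rho}$ as $\rho_{v^{-1}}$ for \emph{some} unit $v$ depending on $u$; it never produces the mechanism that forces $v=u$, which is the actual content of the proposition.

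The paper's proof supplies that mechanism. It takes the genuine Lubin--Tate series $f(X)=\pi X+X^q$ over $\mathcal{O}_K$, quotes the classical law $\Phi_K(u)(\omega')=[u^{-1}]_f(\omega')$ for $\omega'\in W_f$ from Iwasawa's book, and then invokes Oukhaba's Proposition 3.1 to obtain a power series $\theta$ over $\mathcal{O}_{\bar{K}_{ur}}$ with $\theta(X)\equiv u_0X\bmod\deg 2$ and the Frobenius-twisted intertwining $\rho_{\eta}\circ\theta=\theta^{\phi^{m_0}}\circ[\pi^{m_0}]_f$; this $\theta$ restricts to an isomorphism $W_f\to W_{\rho}$. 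Transporting the classical law through $\theta$ gives $\rho_{v^{-1}}\circ\theta=\theta\circ[u^{-1}]_f$ on $W_f$, hence as power series, and comparison of linear coefficients yields $v=u$. If you want to repair your argument you must either construct this intertwiner (or an equivalent comparison with an honest Lubin--Tate module for $K$) or redevelop relative Lubin--Tate theory for $\rho$ from scratch; the ``bookkeeping'' you defer to the hypothesis $\rho_{\eta}\equiv\tau^{m_0}$ is precisely this step, and it is not covered by the structural results you list.
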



\begin{proof}
Let $f(X)=\pi X + X^q$. By Lubin-Tate theory, there exists an injective ring homomorphism $\mathcal{O} \longrightarrow \operatorname{End}(G_a)$ which associates for each $a \in \mathcal{O}$ a unique power series $[a]_f$ such that 
$$ [a]_f(X) \equiv aX \mod {deg~2} ~~ \text{and} ~~ f \circ [a]_f = [a]_f^{\phi} \circ f,$$
where $\phi$ is the Frobenius automorphism of $K_{ur}|K$.
Clearly, we have $f(X)=[\pi]_f(X)$. Since 
$$ \rho_{\eta}(X) \equiv \eta X,~~ [\pi^{m_0}]_f(X) \equiv \pi^{m_0}X \mod deg~2 ~~ \text{and}~~ \rho_{\eta}(X) \equiv [\pi^{m_0}]_f(X) \equiv X^{q^{m_0}} \mod \mathfrak{p}_H,$$
by \cite[Proposition~3.1]{Oukhaba}, there exists a unique power series $\theta \in \mathcal{O}_{\Bar{K}_{ur}}[[X]]$ such that 
$$ \theta(X) \equiv u_0X \mod deg~2 ~~ \text{and} ~~ \rho_{\eta} \circ \theta = \theta^{\phi^{m_0}} \circ [\pi^{m_0}]_f $$ where $u_0$ is the unit of $K$ such that $\eta = u_0 \pi^{m_0}$. We deduce that for all $m \ge 1$, we have 
$$ \rho_{\eta^m} \circ \theta = \theta^{\phi^{m_0}} \circ [\pi^{mm_0}]_f $$
and therefore the  isomorphism 
$$ \theta: W_f \longrightarrow W_{\rho}.$$
Here $W_f = \bigcup W_f^m$, where $W_f^m$ is the set of roots of $[\pi^m]_f$.
Now let $u$ be a unit of $K$ and consider $\Phi_K(u) \in Gal(K^{ab}|K_{ur})$. By \cite[Chapter~6]{Iwasawa}, we have
$$ \Phi_K(u)(\omega')= [u^{-1}]_f (\omega') ~~ \forall \omega' \in W_f .$$
However, since $\Phi_K(u)_{|_{H(W_{\rho})}} \in Gal(H(W_{\rho})|H)$, by \cite[Proposition~2.5]{Oukhaba} we have 
$$ \Phi_K(u)(\omega) = \rho_{{v}^{-1}}(\omega), ~~ \forall \omega \in W_{\rho}$$
for a unit $v \in K$. Let $\omega' \in W_f$ and $\omega= \theta(\omega') \in W_{\rho}$. We have 
\begin{align*}
    \rho_{{v}^{-1}} \circ \theta(\omega') = \rho_{{v}^{-1}}(\omega)= \Phi_K(w)(\omega)
    & = \Phi_K(u)(\theta(\omega')) \\
    & = \theta (\Phi_K(u)(\omega')) \\
    & = \theta \circ [u^{-1}]_f (\omega').
\end{align*}
This is true for all $\omega' \in W_f$. Then, reasoning as in \cite[Lemma 8.1]{Iwasawa}, we can prove that
$$ \rho_{v^{-1}} \circ \theta = \theta \circ [u^{-1}]_f. $$
By identification, we get $u=v$. This concludes the proof.
\end{proof}


\begin{lemma} \label{equality}
Suppose that for any $A >0$, there exists $m \ge A$ such that $(x,x)_{E_{\rho}^m,m}=0$ for all $x \in \mathfrak{p}_m$. Then, there exists a prime $\pi_n$ of $L$ such that
$$(\alpha,\pi_n)_{L,n}=[\alpha,\pi_n]_{L,n}=\frac{1}{\eta^n}\operatorname{T}_{L|K}(\lambda_{\rho}(\alpha)\delta_n(\pi_n))\cdot_{\rho} v_n$$
for all $\alpha \in \mathfrak{p}_L$.
\end{lemma}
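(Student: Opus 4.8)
The plan is to prove the formula at a large auxiliary level $m$, where it reduces to a one-variable explicit reciprocity law of the classical type, and then to descend to level $n$ using the compatibility properties of the pairing, of $\delta$ and of the trace. First I would choose once and for all a norm-compatible system of primes $(\beta_m)_{m\ge n}$ with $\beta_m\in (E_\rho^m)^{\times}$, $\operatorname{N}_{m+1,m}(\beta_{m+1})=\beta_m$ and $\mu(\beta_n)=\mu(v_n)$ (so every $\beta_m$ is a prime of $E_\rho^m$; existence follows from local class field theory applied to the totally ramified tower $(E_\rho^m)_m$, cf. \cite{Oukhaba}), and set $\pi_n:=\beta_n$, which is the prime required by the statement. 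I would also fix, for each $m\ge n$, a generator $v_m$ of $W_\rho^m$ with $\rho_{\eta^{m-n}}(v_m)=v_n$ (possible since $\rho_{\eta^{m-n}}:W_\rho^m\to W_\rho^n$ is surjective and maps generators to generators). The starting point is then the identity, valid for every $m\ge n$ and every $\alpha\in\mathfrak{p}_L$,
\begin{equation*}
(\alpha,\pi_n)_{L,n}=(\alpha_m,\beta_m)_{E_\rho^m,m},\qquad \alpha_m:=\rho_{\eta^{m-n}}(\alpha),
\end{equation*}
which follows by combining Proposition \ref{PropoertiesPairing1}(iii) (so that $(\alpha,\beta_m)_{E_\rho^m,n}=(\alpha,\operatorname{N}_{m,n}(\beta_m))_{L,n}=(\alpha,\pi_n)_{L,n}$) with Proposition \ref{PropoertiesPairing1}(v) (so that $(\alpha,\beta_m)_{E_\rho^m,n}=(\rho_{\eta^{m-n}}(\alpha),\beta_m)_{E_\rho^m,m}$).

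Next I would fix $\alpha$ with $\mu(\alpha)$ in the range where $[\alpha,\pi_n]_{L,n}$ is defined; then $\alpha\in\mathfrak{p}_{L,1}$, and since $\rho_\eta\equiv\tau^{m_0}\bmod\mathfrak{p}_H$ Lemma \ref{lemmadefrho}(iii) gives $\mu(\alpha_m)=(m-n)m_0+\mu(\alpha)\to\infty$. Hence, for $m$ large, $\mu(\alpha_m)$ exceeds the bounds needed for Lemma \ref{removelambda}, Lemma \ref{isdefinedP2} and the reduction below, all applied at level $m$; by the hypothesis we may moreover take such an $m$ with $(x,x)_{E_\rho^m,m}=0$ for all $x\in\mathfrak{p}_m$. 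For this $m$ I would run the one-variable law at level $m$: using $(x,x)_{E_\rho^m,m}=0$ and Lemma \ref{lemmecb} one reduces $(\alpha_m,\beta_m)_{E_\rho^m,m}$ to $(\alpha_m v_m\delta_m(\beta_m),v_m)_{E_\rho^m,m}$, exactly as in \cite{Ignazio, Wiles}; then, since $v_m\delta_m(\beta_m)\in\mathcal{O}_m$ and $\mu(\alpha_m)$ is large, Lemma \ref{removelambda} together with the computation of $(\,\cdot\,,v_m)_{E_\rho^m,m}$ via Proposition \ref{rho=phi-1} (the Lubin--Tate description of the norm residue map on $W_\rho$, in the spirit of Artin--Hasse, Iwasawa \cite{Iwasawa2} and Wiles \cite{Wiles}) yields
\begin{equation*}
(\alpha_m,\beta_m)_{E_\rho^m,m}=\frac{1}{\eta^m}\operatorname{T}_{E_\rho^m|K}\bigl(\lambda_\rho(\alpha_m)\delta_m(\beta_m)\bigr)\cdot_\rho v_m .
\end{equation*}

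It then remains to descend this to level $n$. Using $\lambda_\rho(\alpha_m)=\lambda_\rho(\rho_{\eta^{m-n}}(\alpha))=\eta^{m-n}\lambda_\rho(\alpha)$ (Lemma \ref{lemmadefrho}), the tower formula $\operatorname{T}_{E_\rho^m|K}=\operatorname{T}_{L|K}\circ\operatorname{T}_{m,n}$, and $\operatorname{T}_{m,n}(\delta_m(\beta_m))=\eta^{m-n}\delta_n(\beta_n)=\eta^{m-n}\delta_n(\pi_n)$ from Lemma \ref{deltamn1} (valid because $\operatorname{N}_{m,n}(\beta_m)=\pi_n$), I can pull $\eta^{m-n}\lambda_\rho(\alpha)\in L$ out of $\operatorname{T}_{m,n}$ and rewrite the right-hand side as $\eta^{m-2n}\operatorname{T}_{L|K}(\lambda_\rho(\alpha)\delta_n(\pi_n))\cdot_\rho v_m$; writing $\operatorname{T}_{L|K}(\lambda_\rho(\alpha)\delta_n(\pi_n))=\eta^n t$ with $t\in\mathcal{O}$ (Lemma \ref{isdefinedP2}) and using $\rho_{\eta^{m-n}}(v_m)=v_n$, this collapses to $t\cdot_\rho v_n=\frac{1}{\eta^n}\operatorname{T}_{L|K}(\lambda_\rho(\alpha)\delta_n(\pi_n))\cdot_\rho v_n=[\alpha,\pi_n]_{L,n}$. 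Combined with the identity from the first paragraph this gives $(\alpha,\pi_n)_{L,n}=[\alpha,\pi_n]_{L,n}$, as required. The main obstacle is the one-variable law at level $m$, i.e. the equality $(\gamma,v_m)_{E_\rho^m,m}=\tfrac{1}{\eta^m}\operatorname{T}_{E_\rho^m|K}(\gamma v_m^{-1})\cdot_\rho v_m$ for $\gamma$ of large valuation: this is where the difference between the classical polynomial case and our formal power series case is felt, since one must control the logarithm $\lambda_\rho$, the different (Lemma \ref{generatordifferent}, Lemma \ref{different}) and the Coleman-type series of Oukhaba \cite{Oukhaba} at once, and check that the power series involved converge at the relevant torsion points. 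The remaining bookkeeping (the two compatibility systems, the powers of $\eta$) is routine given the lemmas already established.
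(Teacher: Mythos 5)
Your overall architecture (prove the formula at a large auxiliary level $m$ where the hypothesis $(x,x)_{E_{\rho}^m,m}=0$ holds and $\mu(\alpha_m)$ is large, then descend to level $n$ via $\lambda_{\rho}(\alpha_m)=\eta^{m-n}\lambda_{\rho}(\alpha)$, the trace tower formula and Lemma \ref{deltamn1}) matches the paper's, and your final descent computation is correct. But the core of the lemma is precisely the step you defer: the evaluation of the pairing against the torsion point, i.e.\ $(\gamma,v_m)_{E_{\rho}^m,m}=\tfrac{1}{\eta^m}\operatorname{T}_m(\gamma v_m^{-1})\cdot_{\rho}v_m$ for $\gamma$ of large valuation. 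You invoke it as following ``via Proposition \ref{rho=phi-1}'' and then, in your closing paragraph, explicitly label it ``the main obstacle'' without resolving it. The actual argument is: set $b_m=\alpha_m v_m^{-1}$ and expand $0=(\alpha_m+v_m,(1+b_m)v_m)_{E_{\rho}^m,m}$ by bilinearity to get $(\alpha_m,v_m)=-(\alpha_m,1+b_m)-(v_m,1+b_m)$; kill $(\alpha_m,1+b_m)$ for $m$ large by continuity and the lower bound $\mu(b_m)\ge mm_0-c$ of Lemma \ref{majoration}; rewrite $(v_m,1+b_m)=\Phi_K(\operatorname{N}_m(1+b_m))(v_{2m})-v_{2m}=\rho_{\operatorname{N}_m(1+b_m)^{-1}}(v_{2m})-v_{2m}$ using Proposition \ref{rho=phi-1}; and finally approximate $\operatorname{N}_m(1+b_m)^{-1}\equiv 1-\operatorname{T}_m(b_m)\bmod \mathfrak{p}^{2mm_0}$ to conclude $(\alpha_m,v_m)=\operatorname{T}_m(b_m)\cdot_{\rho}v_{2m}$. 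None of this appears in your proposal, so as written it has a genuine gap at its central point.

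Two further issues stem from your choice of $\pi_n$. The lemma only asserts the existence of \emph{some} prime, and the natural choice is $\pi_n=\operatorname{N}_{m,n}(v_m)$: then at level $m$ the second argument of the pairing is literally the torsion generator $v_m$, and no reduction is needed. By instead taking an arbitrary norm-compatible system of primes $(\beta_m)$, you force yourself to reduce $(\alpha_m,\beta_m)$ to $(\alpha_m v_m\delta_m(\beta_m),v_m)$ for a general prime $\beta_m=u\,v_m$; the unit part $u$ cannot be handled by Lemma \ref{lemmecb} alone (one needs the full decomposition into Teichm\"uller representatives and elements $1-\zeta v_m^k$, which is exactly the content of the later Lemma \ref{lemmaunit} and would risk circularity, since that lemma is deduced from this one). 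Finally, you restrict from the outset to $\alpha$ with $\mu(\alpha)$ large enough for $[\alpha,\pi_n]_{L,n}$ to be defined, whereas the statement is for all $\alpha\in\mathfrak{p}_L$; the point of working at level $m$ is that $\mu(\alpha_m)\ge mm_0-c(\alpha)$ is automatically large for every $\alpha\in\mathfrak{p}_L$, so the restriction is unnecessary and weakens the conclusion you would obtain.
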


\begin{proof}
We prove the Lemma following the steps of \cite[Proposition 23]{Ignazio}, which were essentially used by Wiles \cite[Lemma 8]{Wiles}. Let $\alpha \in \mathfrak{p}_L$.

\noindent \textbf{Step 1:} For $m \ge n$, let $\alpha_m=\rho_{\eta^{m-n}}(\alpha)$, let  $v_m$ be a generator of the $\mathcal{O}$-module $W_{\rho}^m$, and $b_m=\alpha_m v_m^{-1}$. 
If we suppose $(x,x)_{E_{\rho}^m,m}=0$ for all $x \in \mathfrak{p}_m$, we have
$$ 0= (\alpha_m + v_{m}, (1+b_m)v_m)_{E_{\rho}^m,m} = (\alpha_m,v_m)_{E_{\rho}^m,m}+ (\alpha_m, 1+b_m)_{E_{\rho}^m,m} + (v_m, 1+b_m)_{E_{\rho}^m,m},$$
because $\alpha_m +v_m = (1+b_m)v_m$.

\noindent \textbf{Step 2:} For all sufficiently large $m$, we have $(\alpha_m, 1+b_m)_{E_{\rho}^m,m}=0$, for all $\alpha \in \mathfrak{p}_L$. Indeed, 
$$ (\alpha_m, 1+b_m)_{E_{\rho}^m,m}= (\alpha, \operatorname{N}_{m,n}(1+b_m))_{L,n} ,$$
where $\operatorname{N}_{m,n}$ is the norm of $E_{\rho}^m|E_{\rho}^n$. However, by Lemma \ref{majoration} and Remark \ref{remarkC}, there exists a constant $c$ depending only on $n$ such that $\mu(b_m) \ge mm_0-c$ for all $m\ge n$. Thus $1+b_m$ tends to $1$, and so does $\operatorname{N}_{m,n}(1+b_m)$. Since $(\alpha,\cdot)_{L,n}$ is continuous and $W_{\rho}^n$ is discrete, the result follows.

\noindent \textbf{Step 3:} Let $m \gg n$ so that $(\alpha_m, 1+b_m)_{E_{\rho}^m,m}=0$ and suppose that $(x,x)_{E_{\rho}^m,m}=0$ for all $x \in \mathfrak{p}_m$. Let $\pi_n = \operatorname{N}_{m,n}(v_m)$, then $\pi_n$ is a prime of $L$ because $E_{\rho}^m|L$ is a totally ramified extension. Let $v_{2m}$ be a generator of $W_{\rho}^{2m}$ such that $\rho_{\eta^m}(v_{2m})=v_m$. We have 
$$(\alpha,\pi_n)_{L,n}= v_{2m} - \rho_{\operatorname{N}_{m}(1+b_m)^{-1}}(v_{2m}),$$
where $\operatorname{N}_m$ is the norm of $E_{\rho}^m|K$. Indeed,
\begin{align*}
    (\alpha,\pi_n)_{L,n}= (\alpha_m,v_m)_{E_{\rho}^m,m} 
    & = -(v_m, 1+b_m)_{E_{\rho}^m,m} \quad \quad  \quad   \text{(by Step 1 and 2)}\\
    & = - (\Phi_{E_{\rho}^m}(1+b_m)(v_{2m})-v_{2m})\\
    & = - (\Phi_K(\operatorname{N}_m(1+b_m))(v_{2m})- v_{2m}).
\end{align*}
By Proposition \ref{rho=phi-1}  we have $\Phi_K(\operatorname{N}_m(1+b_m))(v_{2m})= \rho_{\operatorname{N}_{m}(1+b_m)^{-1}}(v_{2m})$ and hence $(\alpha,\pi_n)_n= v_{2m} - \rho_{\operatorname{N}_{m}(1+b_m)^{-1}}(v_{2m})$.

\noindent \textbf{Step 4:} $\operatorname{N}_m(1+b_m)^{-1} \equiv 1- \operatorname{T}_m (b_m) \mod \mathfrak{p}^{2mm_0}$ for all sufficiently large $m$. Indeed, let $k$ be a positive integer such that $km_0 \ge c+1$ and let $m \gg k$. Let $x= \operatorname{T}_{m,m-k}(b_m)$, thus 
$$ \operatorname{N}_{m,m-k}(1+b_m)^{-1}= 1-x+y, $$
where $\mu(y) \ge 2 \mu(b_m)$. Therefore, we have
$$ \operatorname{N}_m(1+b_m)^{-1}= \operatorname{N}_{m-k}(1-x+y) = 1 -\operatorname{T}_{m-k}(x-y) + z \equiv 1 - \operatorname{T}_m(b_m) \mod \mathfrak{p}^{2mm_0} ,$$
because $\mu(T_{m-k}(y)) \ge 2mm_0$ and  $\mu(z) \ge 2 \mu(x-y) \ge 2mm_0$ by Lemma \ref{different}.

\noindent \textbf{Step 5:} Choose $m \ge \frac{q}{q-1}(2n+\frac{1}{2m_0})$, and sufficiently large to satisfy Step 2 and Step 4. If in addition we have $(x,x)_{E_{\rho}^m,m}=0$ for all $x \in \mathfrak{p}_m$, then $(\alpha, \pi_n)_{L,n}=[\alpha, \pi_n]_{L,n}$, where $\pi_n = \operatorname{N}_{m,n}(v_m)$ as in Step 3. 
Indeed, by the previous steps we get  $(\alpha, \pi_n)_{L,n}=\operatorname{T}_m(\alpha_m v_m^{-1}) \cdot _{\rho} v_{2m} =\dfrac{1}{\eta^m}\operatorname{T}_m(\alpha_m v_m^{-1}) \cdot_{\rho} v_m $. 
Moreover, $m$ is sufficiently large so that $\mu(\alpha_m) \ge \frac{mm_0}{q}+\frac{1}{q-1}+ \frac{1}{q^{mm_0}(q-1)}$. Therefore, by Lemma \ref{removelambda}, we get 
\begin{equation*}
(\alpha, \pi_n)_{L,n} =\dfrac{1}{\eta^m}\operatorname{T}_m(\alpha_m v_m^{-1}) \cdot_{\rho} v_m = [\alpha_m,v_m]_{E_{\rho}^m,m} =  [\alpha, \pi_n]_{L,n}.
\end{equation*}

\end{proof}
\begin{lemma} \label{lemmaunit}
Suppose $\rho$ is such that $(x,x)_{L,n}=0$ for all $x \in \mathfrak{p}_L$. Let $\alpha \in \mathfrak{p}_L$ such that $\mu(\alpha) \ge \frac{nm_0}{q} + \frac{1}{q-1} + \frac{1}{q^{nm_0}(q-1)}$ and $\beta$ a unit in $L^{\times}$. Then 
$$ (\alpha,\beta)_{L,n} =[\alpha,\beta]_{L,n} =  \frac{1}{\eta^n}\operatorname{T}_{L|K}(\lambda_{\rho}(\alpha)\delta_n(\beta))\cdot_{\rho} v_n.$$
\end{lemma}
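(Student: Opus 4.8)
The plan is to reduce the statement for a general unit $\beta$ to the case of a prime element, which was handled in Lemma \ref{equality}. Since $L^{\times} = \pi_n^{\mathbb{Z}} \times \mathcal{O}_L^{\times}$ for a fixed prime $\pi_n$ of $L$, and both $(\alpha, \cdot)_{L,n}$ and $[\alpha, \cdot]_{L,n}$ are homomorphisms on $L^{\times}$ by Proposition \ref{PropoertiesPairing1}(i) and Proposition \ref{PropertiesPairing2}(i) (together with Lemma \ref{propertiesDelta}(i)), it suffices to prove the identity $(\alpha, \beta)_{L,n} = [\alpha, \beta]_{L,n}$ separately for $\beta$ a unit and for $\beta = \pi_n$ a prime. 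The prime case is exactly Lemma \ref{equality}, provided we can choose $\pi_n$ to be the prime furnished there; so the real content here is the unit case.

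First I would reduce to units of the form $\beta = 1 - \zeta v_n^k$ with $k \ge 1$ and $\zeta$ a Teichmüller representative (a $(q-1)$-th root of unity), since such elements, together with roots of unity, generate $\mathcal{O}_L^{\times}$ topologically, and both pairings are continuous in $\beta$ — the left one because $\Phi_L$ is continuous and $W_{\rho}^n$ is discrete, the right one because $\delta_n$ and the trace are continuous. (The roots of unity themselves pair trivially on both sides since they are norms from the unramified extension killing the pairing, and have trivial $\delta_n$.) Then I would apply Lemma \ref{lemmecb} with $b = \zeta v_n^k$, using the hypothesis $(x,x)_{L,n} = 0$, to write
\begin{equation*}
(\alpha, 1 - \zeta v_n^k)_{L,n} = \Bigl(\tfrac{\alpha \zeta v_n^k}{1 - \zeta v_n^k}, v_n^{-k}\Bigr)_{L,n} = \Bigl(\tfrac{-k\alpha \zeta v_n^k}{1 - \zeta v_n^k}, v_n\Bigr)_{L,n},
\end{equation*}
where the last step uses $\mathcal{O}$-linearity in the first slot (note $v_n^{-k} = v_n^{-1} \cdot (\text{unit})$ up to elements pairing trivially — more carefully one uses that $v_n$ itself is a prime of $L$ and reindexes). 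The key point is that the argument $\tfrac{-k\alpha \zeta v_n^k}{1-\zeta v_n^k} = \alpha v_n \,\delta_n(1 - \zeta v_n^k)$ up to the ambiguity in $\delta_n$, so that $(\alpha, 1 - \zeta v_n^k)_{L,n} = (\alpha v_n \delta_n(\beta), v_n)_{L,n}$; this is the analogue of the (commented-out) Lemma \ref{lemmafirstpairing}, and I would either reinstate that lemma or prove the needed instance inline, checking via Lemma \ref{majoration} that the $\delta_n$-ambiguity (an element of $\mathcal{D}_n$) contributes zero when the valuation of $\alpha$ is as large as hypothesized.

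Having reduced $(\alpha, \beta)_{L,n}$ to $(\alpha v_n \delta_n(\beta), v_n)_{L,n}$, I would then invoke the prime case: since $v_n$ is itself a prime of $L = E_{\rho}^n$, Lemma \ref{equality} (whose hypothesis is precisely the one assumed here, via the stability of the condition $(x,x)_{\cdot} = 0$ under the tower — this should be checked, perhaps it needs the remark that if $(x,x)_{L,n}=0$ holds at level $n$ it propagates to infinitely many levels $m$, which follows from Proposition \ref{PropoertiesPairing1}(v)) gives $(\alpha', v_n)_{L,n} = [\alpha', v_n]_{L,n} = \tfrac{1}{\eta^n}\operatorname{T}_{L|K}(\lambda_{\rho}(\alpha') v_n^{-1})\cdot_{\rho} v_n$ for $\alpha' = \alpha v_n \delta_n(\beta)$, provided $\mu(\alpha')$ is large enough — which it is, given the hypothesis on $\mu(\alpha)$ and $\mu(v_n) = 1/(q^{nm_0 - 1}(q-1))$. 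Finally, by Lemma \ref{removelambda} one may replace $\lambda_{\rho}(\alpha v_n \delta_n(\beta))$ by $\alpha v_n \delta_n(\beta)$, and $\tfrac{1}{\eta^n}\operatorname{T}_{L|K}(\alpha v_n \delta_n(\beta) v_n^{-1})\cdot_\rho v_n = \tfrac{1}{\eta^n}\operatorname{T}_{L|K}(\alpha \delta_n(\beta))\cdot_\rho v_n$; a final application of Lemma \ref{removelambda} (backwards) reinserts $\lambda_\rho$ to match the stated formula $[\alpha,\beta]_{L,n}$. The main obstacle I anticipate is bookkeeping the valuation inequalities at each reduction so that all the $\mathcal{D}_n$-ambiguities and the difference $\lambda_\rho(\alpha') - \alpha'$ are genuinely negligible for the trace — in particular verifying that $\mu(\alpha v_n \delta_n(\beta))$ clears the threshold $\tfrac{2}{q-1}$ of Lemma \ref{isdefinedP2} and the stronger threshold of Lemma \ref{removelambda} — and, secondarily, justifying that the prime $\pi_n$ appearing in Lemma \ref{equality} can be taken to be $v_n$ (or relating the two by a unit whose contribution is controlled).
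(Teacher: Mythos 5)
Your overall architecture (reduce to units of the form $1-\zeta\cdot(\text{prime})^k$, apply Lemma \ref{lemmecb}, then quote the prime case from Lemma \ref{equality}) matches the paper's, and the two concerns you flag at the end are exactly where the argument breaks; the paper resolves them by a device you are missing. The critical gap is your appeal to Lemma \ref{equality} \emph{for $\rho$ itself}: its hypothesis requires $(x,x)_{E_{\rho}^m,m}=0$ for all $x\in\mathfrak{p}_m$ at arbitrarily large levels $m$, and this does not follow from $(x,x)_{L,n}=0$ at level $n$ via Proposition \ref{PropoertiesPairing1}(v) — that property relates the pairings at levels $n$ and $m$ over the \emph{same} field $L$, whereas here the field grows from $E_{\rho}^n$ to $E_{\rho}^m$, and the compatibility between levels goes through norms/traces in the wrong direction. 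The paper's proof never propagates the hypothesis: for each sufficiently large $m$ it takes the invertible series $r=r_m$ of Proposition \ref{existencer} at level $m$, forms the twist $\rho'_a=r\circ\rho_a\circ r^{-1}$, which satisfies $(x,x)_{\rho',E_{\rho}^m,m}=0$ by Lemma \ref{condrhoprime} \emph{by construction}, applies Lemma \ref{equality} to $\rho'$ to get a prime $\pi'_n$ with $(\cdot,\pi'_n)_{\rho',L,n}=[\cdot,\pi'_n]_{\rho',L,n}$, and then transports back and forth between the $\rho$- and $\rho'$-pairings using Proposition \ref{PropoertiesPairing1}(vi) and Proposition \ref{PropertiesPairing2}(ii). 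The hypothesis $(x,x)_{\rho,L,n}=0$ of the lemma is used only once, to run Lemma \ref{lemmecb} for $\rho$ at level $n$.

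The second, related defect is your choice of $v_n$ as the prime in the filtration of the units. Lemma \ref{equality} does not assert $(\alpha',\pi)_{L,n}=[\alpha',\pi]_{L,n}$ for an arbitrary prime $\pi$ of $L$; it manufactures one specific prime $\pi_n=\operatorname{N}_{m,n}(v_m)$, which need not be $v_n$, and relating the two by a unit is circular since the unit case is precisely what is being proved. The paper sidesteps this by writing every unit (up to Teichm\"uller representatives and topological closure) as a product of elements $1-\zeta\pi_n'^{\,j}$ built from the prime $\pi'_n$ that Lemma \ref{equality} actually delivers for $\rho'$, rather than from $v_n$; the identity $\delta_n(1-\zeta\pi_n'^{\,j})=-j\zeta\pi_n'^{\,j-1}t'(v_n)/(1-\zeta\pi_n'^{\,j})$ with $t(v_n)=\pi'_n$ then reassembles the right-hand side into $[\alpha,1-\zeta\pi_n'^{\,j}]_{\rho,L,n}$ after two applications of Lemma \ref{removelambda}. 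Your valuation bookkeeping concerns are legitimate but secondary; without the twist by $r_m$ the proof does not go through.
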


\begin{proof}

For all $m \ge n$, Proposition \ref{existencer} shows that there exists an invertible power series $r \in \mathcal{O}_H\{\{\tau\}\}$ such that $ \prod_{\omega \in W_{\rho}^m}(X-\omega)=r \circ \rho_{\eta^m}(X)$. Then, by Lemma \ref{condrhoprime}, the formal Drinfeld module $\rho'$ defined by $ \rho'_a= r \circ \rho_a \circ r^{-1}$ for all $a \in \mathcal{O}$ satisfies $(x,x)_{\rho',E_{\rho}^m,m}=0$.
Choose $m$ sufficiently large so that Lemma \ref{equality} is satisfied for $\rho'$, and let $\pi'_n$ be a prime of $L$ satisfying $(\alpha,\pi'_n)_{\rho',L,n}=[\alpha,\pi'_n]_{\rho',L,n}$. 
We notice that it is sufficient to prove the Lemma for $\beta= 1- \zeta {\pi'_n}^j$, where  $\zeta$ is a $(q-1)^{\text{th}}$ root of unity, and $j \ge 1$ is an integer. By Lemma \ref{lemmecb}, we have
\begin{align}
    (\alpha,1-\zeta {\pi'_n}^j)_{\rho,L,n} & = (\dfrac{\zeta {\pi'_n}^j}{1-\zeta {\pi'_n}^j} \alpha, (\zeta {\pi'_n}^j)^{-1})_{\rho,L,n} \\
    & = -j (\dfrac{\zeta {\pi'_n}^j}{1-\zeta {\pi'_n}^j} \alpha, \pi'_n)_{\rho,L,n} . \label{equnit1}
\end{align}
Using Proposition \ref{PropoertiesPairing1} (vi) and then Lemma \ref{equality} for $\rho'$, \eqref{equnit1} is equal to

\begin{equation}
     -j r^{-1}((r(\dfrac{\zeta {\pi'_n}^j}{1-\zeta {\pi'_n}^j} \alpha),\pi'_n)_{\rho',L,n}) 
     = -j r^{-1}([r(\dfrac{\zeta {\pi'_n}^j}{1-\zeta {\pi'_n}^j} \alpha),\pi'_n]_{\rho',L,n}). \label{equnit2}
\end{equation}
By Proposition \ref{PropertiesPairing2} (ii), \eqref{equnit2} is equal to 
\begin{align}
    -j [\dfrac{\zeta {\pi'_n}^j}{1-\zeta {\pi'_n}^j} \alpha,\pi'_n]_{\rho,L,n} 
    & =  \frac{-j}{\eta^n} \operatorname{T}_{L|K} (\dfrac{\zeta {\pi'_n}^j}{1-\zeta {\pi'_n}^j} \times \alpha \times \delta_n(\pi'_n)) \cdot_{\rho} v_n \label{equnit3} \\
    & =  \frac{1}{\eta^n} \operatorname{T}_{L|K} (\dfrac{-j \zeta {\pi'_n}^j}{1-\zeta {\pi'_n}^j} \times \alpha \times \frac{t'(v_n)}{\pi'_n}) \cdot_{\rho} v_n \label{equnit5}
\end{align}
where \eqref{equnit3} is deduced from Lemma \ref{removelambda}, and $t(X) \in \mathcal{O}_H((X))$ satisfying $t(v_n)=\pi'_n$. Since $1- \zeta ({t(v_n)})^j= 1- \zeta {\pi'_n}^j $, we have $$\delta_n(1- \zeta {\pi'_n}^j) = \dfrac{-j\zeta {\pi'_n}^{j-1} t'(v_n) }{1-\zeta {\pi'_n}^j},$$ and thus by Lemma \ref{removelambda}, \eqref{equnit5} is equal to $\frac{1}{\eta^n}\operatorname{T}_{L|K}(\lambda_{\rho}(\alpha)\delta_n(1- \zeta {\pi'_n}^j))\cdot_{\rho} v_n$. Hence 

\begin{equation*}
     (\alpha,1-\zeta {\pi'_n}^j)_{\rho,L,n} = [\alpha, 1- \zeta {\pi'_n}^j]_{\rho,L,n}.
\end{equation*}

\end{proof}

\begin{prop} \label{propunit}
Let $\alpha \in \mathfrak{p}_L$ such that $\mu(\alpha) \ge \frac{nm_0}{q} + \frac{1}{q-1} + \frac{1}{q^{nm_0}(q-1)}$ and $\beta$ a unit in $L^{\times}$. Then 
$$ (\alpha,\beta)_{L,n} =[\alpha,\beta]_{L,n} =  \frac{1}{\eta^n}\operatorname{T}_{L|K}(\lambda_{\rho}(\alpha)\delta_n(\beta))\cdot_{\rho} v_n.$$
\end{prop}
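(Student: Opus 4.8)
The plan is to remove the hypothesis $(x,x)_{\rho,L,n}=0$ from Lemma \ref{lemmaunit} by passing to the isomorphic module $\rho'$ produced in Proposition \ref{existencer} and Lemma \ref{condrhoprime}, and then transporting the identity back via the naturality properties of both pairings under isomorphism. First I would invoke Proposition \ref{existencer} with our fixed $n$ to obtain an invertible power series $r=r_n\in\mathcal{O}_H\{\{\tau\}\}$ with $\prod_{\omega\in W_\rho^n}(X-\omega)=r\circ\rho_{\eta^n}(X)$, and set $\rho'_a=r\circ\rho_a\circ r^{-1}$. By Lemma \ref{condrhoprime}, $\rho'$ is again a formal Drinfeld module of stable reduction of height $1$ and satisfies $(x,x)_{\rho',L,n}=0$ for all $x\in\mathfrak{p}_L$, so Lemma \ref{lemmaunit} applies verbatim to $\rho'$: for the same $\alpha$ (note $\mu(r(\alpha))=\mu(\alpha)$ since $r$ is invertible with unit linear coefficient, so the valuation hypothesis is preserved) and any unit $\beta$,
\[
(r(\alpha),\beta)_{\rho',L,n}=[r(\alpha),\beta]_{\rho',L,n}.
\]

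Next I would apply Proposition \ref{PropoertiesPairing1}(vi) to the left side and Proposition \ref{PropertiesPairing2}(ii) to the right side. Writing the isomorphism as $\rho_a=r^{-1}\circ\rho'_a\circ r$ (equivalently $\rho'_a=r\circ\rho_a\circ r^{-1}$, so in the notation of those propositions the intertwiner from $\rho'$ back to $\rho$ is $t=r^{-1}$, with $t^{-1}=r$), we get
\[
(\alpha,\beta)_{\rho,L,n}=r^{-1}\big((r(\alpha),\beta)_{\rho',L,n}\big)
\qquad\text{and}\qquad
[\alpha,\beta]_{\rho,L,n}=r^{-1}\big([r(\alpha),\beta]_{\rho',L,n}\big).
\]
Since the two inner quantities are equal by Lemma \ref{lemmaunit} applied to $\rho'$, applying $r^{-1}$ to both yields $(\alpha,\beta)_{\rho,L,n}=[\alpha,\beta]_{\rho,L,n}$. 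Finally, by the definition of $[~,~]_{L,n}$ in Lemma \ref{isdefinedP2} (whose well-definedness hypothesis $\mu(\alpha)\ge\frac{2}{q-1}$ is implied by the stronger hypothesis here, as $\frac{nm_0}{q}+\frac1{q-1}+\frac1{q^{nm_0}(q-1)}\ge\frac{2}{q-1}$ for $n\ge1$), this common value equals $\frac{1}{\eta^n}\operatorname{T}_{L|K}(\lambda_\rho(\alpha)\delta_n(\beta))\cdot_\rho v_n$, which is the claim.

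The main point requiring care is the bookkeeping of the isomorphism direction: one must check that the definitions \eqref{defLn}, \eqref{defXL}, \eqref{deltadef} and the generator $v'_n=t^{-1}(v_n)$ of $W_{\rho'}^n$ are matched consistently so that Proposition \ref{PropoertiesPairing1}(vi) and Proposition \ref{PropertiesPairing2}(ii) are being applied with the same $t$, and that $r$ has the right form for these propositions (it is a genuine invertible twisted power series in $\mathcal{O}_H\{\{\tau\}\}$, which is exactly what those propositions require). A secondary, purely routine, check is that the valuation hypothesis on $\alpha$ transfers to $r(\alpha)$: since $r=c_0+c_1\tau+\cdots$ with $c_0$ a unit and all $c_i\in\mathcal{O}_H$, we have $\mu(r(\alpha))=\mu(\alpha)$ for $\alpha\in\mathfrak{p}_L$, so the hypothesis is preserved. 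Everything else is a direct substitution, so no serious obstacle is expected beyond this organizational step.
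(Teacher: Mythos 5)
Your proposal is correct and follows essentially the same route as the paper: pass to $\rho'_a=r\circ\rho_a\circ r^{-1}$ via Proposition \ref{existencer} and Lemma \ref{condrhoprime}, apply Lemma \ref{lemmaunit} to $\rho'$, and transport back with Proposition \ref{PropoertiesPairing1}(vi) and Proposition \ref{PropertiesPairing2}(ii). Your extra checks (preservation of the valuation hypothesis under $r$ and the consistency of the intertwiner direction) are sound and only make explicit what the paper leaves implicit.
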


\begin{proof}
By Proposition \ref{existencer}, there exists an invertible power series $r \in \mathcal{O}_H\{\{\tau\}\}$ such that $ \prod_{\omega \in W_{\rho}^n}(X-\omega)=r \circ \rho_{\eta^n}(X)$.
Let $\rho'$ be the formal Drinfeld module defined by $ \rho'_a= r \circ \rho_a \circ r^{-1}$ for all $a \in \mathcal{O}$. Then, by Lemma \ref{condrhoprime} we have $(x,x)_{\rho',E_{\rho}^n,n}=0$. Hence, by Lemma \ref{lemmaunit} for $\rho'$, we have 
\begin{equation*}
    (\alpha,\beta)_{\rho,L,n} = r^{-1} ( (r(\alpha),\beta)_{\rho',L,n}) = r^{-1} ( [r(\alpha),\beta]_{\rho',L,n}) = [\alpha,\beta]_{\rho,L,n}.
\end{equation*}
\end{proof}
\begin{prop} \label{resforVn}
Let $\alpha \in \mathfrak{p}_L$  
such that $\mu(\alpha) \ge \frac{nm_0}{q} + \frac{1}{q-1} + \frac{1}{q^{nm_0}(q-1)}$ and let $\beta$ be a prime of $L$, then
$$ (\alpha,\beta)_{L,n} =[\alpha,\beta]_{L,n} =  \frac{1}{\eta^n}\operatorname{T}_{L|K}(\lambda_{\rho}(\alpha)\delta_n(\beta))\cdot_{\rho} v_n.$$
\end{prop}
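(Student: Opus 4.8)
The plan is to reduce the statement to two ingredients already available: the unit case (Proposition \ref{propunit}) and a single computation of $(\alpha,\varpi)_{L,n}$ for one prime $\varpi$ of $L$, exploiting that for a fixed $\alpha$ both sides of the claimed identity are homomorphisms in the second variable.

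First I would record that, for our fixed $\alpha$ with $\mu(\alpha)\ge\frac{nm_0}{q}+\frac{1}{q-1}+\frac{1}{q^{nm_0}(q-1)}$, the map $\beta\mapsto(\alpha,\beta)_{L,n}$ is a group homomorphism $L^{\times}\to W_{\rho}^n$ by Proposition \ref{PropoertiesPairing1}(i), and $\beta\mapsto[\alpha,\beta]_{L,n}$ is one by Proposition \ref{PropertiesPairing2}(i) (equivalently, because $\delta_n$ is a homomorphism by Lemma \ref{propertiesDelta}(i)); moreover this valuation hypothesis forces $\mu(\alpha)\ge\frac{2}{q-1}$, so $[\alpha,\cdot]_{L,n}$ is indeed defined by Lemma \ref{isdefinedP2}, and the last equality of the statement is then just the definition given there. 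Since $L^{\times}=\mathcal{O}_L^{\times}\cdot\varpi^{\mathbb{Z}}$ for any prime element $\varpi$ of $L$, and the two homomorphisms already agree on $\mathcal{O}_L^{\times}$ by Proposition \ref{propunit}, it suffices to check $(\alpha,\varpi)_{L,n}=[\alpha,\varpi]_{L,n}$ for a single prime $\varpi$.

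To produce such a $\varpi$ I would follow the conjugation trick used for Proposition \ref{propunit}, but route it through Lemma \ref{equality} instead of Lemma \ref{lemmaunit}. Fix a level $m$ large enough for the thresholds appearing in the proof of Lemma \ref{equality}, apply Proposition \ref{existencer} at level $m$ to obtain an invertible $r\in\mathcal{O}_H\{\{\tau\}\}$ with $\prod_{\omega\in W_{\rho}^m}(X-\omega)=r\circ\rho_{\eta^m}(X)$, and set $\rho'_a=r\circ\rho_a\circ r^{-1}$. By Lemma \ref{condrhoprime}, $\rho'$ is again a formal Drinfeld module of stable reduction of height $1$ with $(x,x)_{\rho',E_{\rho'}^m,m}=0$, while $E_{\rho'}^k=E_{\rho}^k$ for all $k$ because $W_{\rho'}^k=r(W_{\rho}^k)$; in particular $E_{\rho'}^n=L$. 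Running the argument of Lemma \ref{equality} for $\rho'$ (which uses only the single level $m$) then produces a prime $\varpi$ of $L$ with $(\alpha,\varpi)_{\rho',L,n}=[\alpha,\varpi]_{\rho',L,n}$ for $\alpha$ of the prescribed valuation. Transferring back to $\rho$ by Proposition \ref{PropoertiesPairing1}(vi) and Proposition \ref{PropertiesPairing2}(ii), and using $\mu(r(\alpha))=\mu(\alpha)$ (as $r$ is invertible over $\mathcal{O}_H$, so $[r(\alpha),\varpi]_{\rho',L,n}$ is legitimate), I get $(\alpha,\varpi)_{\rho,L,n}=r^{-1}\bigl((r(\alpha),\varpi)_{\rho',L,n}\bigr)=r^{-1}\bigl([r(\alpha),\varpi]_{\rho',L,n}\bigr)=[\alpha,\varpi]_{\rho,L,n}$. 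Together with the homomorphism property and Proposition \ref{propunit} this gives the identity for every $\beta\in L^{\times}$.

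The step I expect to be the main obstacle is the one already internal to Lemma \ref{equality}, namely that its hypothesis — that $(x,x)_{E_{\rho'}^m,m}=0$ for arbitrarily large $m$ — holds for the auxiliary module $\rho'$. As in the proof of Lemma \ref{lemmaunit}, the way around this is that only one sufficiently large level $m$ is actually used in Steps 1--5 of that proof, and the conjugating series $r$ may be chosen with that level in mind (via Proposition \ref{existencer} at level $m$), so the required vanishing holds at whatever $m$ the thresholds demand; one then re-runs those five steps for $\rho'$. The remaining points — that $\mu(\alpha)$ is large enough for each invocation of Lemma \ref{different}, Lemma \ref{removelambda} and Lemma \ref{isdefinedP2}, that $\mu(r(\alpha))=\mu(\alpha)$, and that passing to $\rho'$ leaves $L$ and $W_{\rho}^n$ (up to the isomorphism $r$) unchanged — are routine and parallel the earlier lemmas.
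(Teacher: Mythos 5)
Your proposal is correct and follows essentially the same route as the paper: the paper also writes the given prime $\beta$ as $u\pi'_n$ with $u$ a unit and $\pi'_n$ the prime produced by Lemma \ref{equality} applied to the conjugate module $\rho'$ from Proposition \ref{existencer} at a large level $m$, handles $u$ via Proposition \ref{propunit}, and transfers the prime case back through Proposition \ref{PropoertiesPairing1}(vi) and Proposition \ref{PropertiesPairing2}(ii). Your reformulation via "two homomorphisms agreeing on units and on one prime" is just a repackaging of that same decomposition.
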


\begin{proof}
Again by Proposition \ref{existencer}, for each $m \ge n$, there exists $r \in \mathcal{O}_H\{\{\tau\}\}$ be such that $ \prod_{\omega \in W_{\rho}^m}(X-\omega)=r \circ \rho_{\eta^m}(X)$.
Let $\rho'$ be the formal Drinfeld module defined by $ \rho'_a= r \circ \rho_a \circ r^{-1}$ for all $a \in \mathcal{O}$. Thus by Lemma \ref{condrhoprime}, we have $(x,x)_{\rho',E_{\rho}^m,m}=0$. Choose $m$ sufficiently large so that Lemma \ref{equality} is satisfied for $\rho'$ and let $\pi'_n$ be a prime of $E_{\rho}^n$ satisfying $(\alpha,\pi'_n)_{\rho',L,n}=[\alpha,\pi'_n]_{\rho',L,n}$. Then we can write $\beta = u \pi'_n$ for a unit $u \in L$. Hence,
\begin{equation*}
    (\alpha,\beta)_{\rho,L,n} = (\alpha,u \pi'_n)_{\rho,L,n} = (\alpha,u)_{\rho,L,n} +(\alpha,\pi'_n)_{\rho,L,n} .
\end{equation*}
By Proposition \ref{propunit}, we have $(\alpha,u)_{\rho,L,n} = [\alpha,u]_{\rho,L,n}$. On the other hand, by Proposition \ref{PropoertiesPairing1} (vi), we have 
\begin{equation*}
   (\alpha,\pi'_n)_{\rho,L,n} = r^{-1}((r(\alpha),\pi'_n)_{\rho',L,n}) = r^{-1}([r(\alpha),\pi'_n]_{\rho',L,n}),
\end{equation*}
the last equality being deduced from Lemma \ref{equality} for $\rho'$. Hence, by Proposition \ref{PropertiesPairing2}, we have
$$ (\alpha,\beta)_{\rho,L,n} =  [\alpha,u]_{\rho,L,n} + [\alpha,\pi'_n]_{\rho,L,n}  = [\alpha,\beta]_{\rho,L,n}.$$
\end{proof}
Combining Proposition \ref{propunit} and Proposition \ref{resforVn}, we obtain

\begin{theorem} \label{mainThm}
Let $\alpha \in \mathfrak{p}_L$ such that $\mu(\alpha) \ge \frac{nm_0}{q} + \frac{1}{q-1} + \frac{1}{q^{nm_0}(q-1)}$ and $\beta \in L^{\times}$. We have
$$
    (\alpha,\beta)_{\rho,L,n}= [\alpha,\beta]_{\rho,L,n}=\frac{1}{\eta^n} T_{E_{\rho}^n|K}(\lambda_{\rho}(\alpha)\delta_n(\beta)) \cdot_{\rho} v_n.
$$
\end{theorem}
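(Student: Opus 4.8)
The plan is to reduce the statement to the two cases already settled: $\beta$ a unit of $L$, handled by Proposition \ref{propunit}, and $\beta$ a prime of $L$, handled by Proposition \ref{resforVn}. Since both the Kummer pairing $(~,~)_{\rho,L,n}$ and the auxiliary form $[~,~]_{\rho,L,n}$ are bilinear (Proposition \ref{PropoertiesPairing1} (i) and Proposition \ref{PropertiesPairing2} (i)), an arbitrary element of $L^{\times}$ can be split into its unit part and a power of a fixed prime, and the identity will follow by additivity in the second variable.

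First I would fix a prime $\pi_L$ of $L$ and write an arbitrary $\beta \in L^{\times}$ as $\beta = u\,\pi_L^{k}$ with $u$ a unit of $L$ and $k \in \mathbb{Z}$. The hypothesis $\mu(\alpha) \ge \frac{nm_0}{q}+\frac{1}{q-1}+\frac{1}{q^{nm_0}(q-1)}$ is precisely the one required in Propositions \ref{propunit} and \ref{resforVn}; moreover it implies $\mu(\alpha)\ge\frac{2}{q-1}$, so $[\alpha,\beta]_{\rho,L,n}$ is well defined by Lemma \ref{isdefinedP2}. Bilinearity of the Kummer pairing then gives
$$(\alpha,\beta)_{\rho,L,n} = (\alpha,u)_{\rho,L,n} + k\,(\alpha,\pi_L)_{\rho,L,n}.$$

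Next I would apply Proposition \ref{propunit} to the unit $u$ and Proposition \ref{resforVn} to the prime $\pi_L$, replacing each summand on the right by the corresponding value of $[~,~]_{\rho,L,n}$, and then use bilinearity of $[~,~]_{\rho,L,n}$ (equivalently, the additivity of the trace together with the fact that $\delta_n$ is a group homomorphism, Lemma \ref{propertiesDelta} (i)) to recombine the two terms into $[\alpha,\beta]_{\rho,L,n}$. This yields $(\alpha,\beta)_{\rho,L,n} = [\alpha,\beta]_{\rho,L,n}$, and the last equality in the statement — the expression of $[\alpha,\beta]_{\rho,L,n}$ through the trace and the logarithm of $\rho$ — is simply the definition of $[~,~]_{\rho,L,n}$ given in Lemma \ref{isdefinedP2}, recalling that $L = E_{\rho}^n$ here.

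There is essentially no obstacle left at this point: all the analytic work — the valuation estimates of Lemma \ref{majoration}, the passage through an isomorphic $\rho'$ with $(x,x)_{\rho',L,n}=0$ via Proposition \ref{existencer} and Lemma \ref{condrhoprime}, the use of Lemma \ref{lemmecb} and Lemma \ref{equality}, and the removal of $\lambda_{\rho}$ in Lemma \ref{removelambda} — has already been carried out inside Propositions \ref{propunit} and \ref{resforVn}. The only point deserving a word of care is the bookkeeping on $\mu(\alpha)$, which is consistent across all three statements by design.
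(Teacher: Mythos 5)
Your proposal is correct and is exactly the paper's argument: the paper's proof of Theorem \ref{mainThm} is the one-line statement ``Combining Proposition \ref{propunit} and Proposition \ref{resforVn}, we obtain...'', i.e.\ precisely the decomposition $\beta=u\pi_L^k$ together with bilinearity of both pairings that you spell out. Your version just makes the bookkeeping explicit.
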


\bibliographystyle{unsrt}



\end{document}